\numberwithin{equation}{section}
\theoremstyle:=definition,remark,plain\do{%
        \expandafter\g@addto@macro\csname th@\theoremstyle\endcsname{%
            \addtolength\thm@preskip\parskip
            }%
        }
\theoremstyle{plain}
\newtheorem{theorem}{Theorem}
\newtheorem{prop}[theorem]{Proposition}
\newtheorem{lemma}[theorem]{Lemma}
\newtheorem{corollary}[theorem]{Corollary}
\newtheorem{definition}[theorem]{Definition}
\numberwithin{theorem}{section}
\theoremstyle{definition}
\newtheorem{example}{Example}
\newtheorem{remark}[example]{Remark}
\def\uddots{\mathinner{\mkern1mu\raise\p@
\vbox{\kern7\p@\hbox{.}}\mkern2mu
\raise4\p@\hbox{.}\mkern2mu\raise7\p@\hbox{.}\mkern1mu}}
\mathchardef\hy="2D
\newcommand{\integer}{\mathbb{Z}}
\newcommand{\isom}{\cong}
\newcommand{\Hom}{\mathrm{Hom}}
\newcommand{\sthom}{\underline{\mathrm{Hom}}}
\newcommand{\End}{\mathrm{End}}
\newcommand{\catC}{\mathcal{C}}
\newcommand{\catT}{\mathcal{T}}
\newcommand{\stmod}{\mathrm{\underline{mod}}\hy}
\newcommand{\rmod}{\mathrm{mod}\hy}
\newcommand{\proj}[1]{\mathrm{proj}\hy #1}
\newcommand{\add}{\mathrm{add}}
\newcommand{\filt}{\mathcal{F}}
\DeclareMathOperator{\Soc}{soc}
\DeclareMathOperator{\Rad}{rad}
\newcommand{\config}{\mathcal{C}}
\newcommand{\sms}{\mathcal{S}}
\newcommand{\twotilt}{\mathsf{2tilt}}
\newcommand{\stinv}[1]{\smash{\overline{#1}}\vphantom{#1}^{-1}}
\newcommand{\smss}{\mathsf{sms}}
\newcommand{\silt}{\mathsf{silt}}
\newcommand{\tilt}{\mathsf{tilt}}
\newcommand{\twosilt}{\mathsf{2silt}}
\newcommand{\frakf}{\mathfrak{F}}
\newcommand{\BrTree}{\mathsf{BrTree}}
\begin{document}
\title{Two-term tilting complexes and simple-minded systems of self-injective Nakayama algebras}
\author{Aaron Chan}
\maketitle

\begin{abstract}
We study the relation between simple-minded systems and two-term tilting complexes for self-injective Nakayama algebras.  More precisely, we show that any simple-minded system of a self-injective Nakayama algebra is the image of the set of simple modules under a stable equivalence, which is given by the restriction of a standard derived equivalence induced by a two-term tilting complex.  We achieve this by exploiting and connecting the mutation theories from the combinatorics of Brauer tree, configurations of stable translations quivers of type A, and triangulations of a punctured convex regular polygon.

\smallskip
\noindent \textbf{Keywords}: simple-minded system, (two-term) tilting complex, Brauer tree, configuration, triangulation, mutation.\\
\noindent \textbf{MSC2010}: 16G10, 18E30.\\
\end{abstract}

\section{Introduction}
\label{sec:2tilt-1}
Constructing derived equivalences and stable equivalences are fundamental problems in representation theory.  For finite dimensional self-injective algebras, many stable equivalences are induced by a corresponding derived equivalence.  In \cite{CKL}, we have surveyed and studied such connection in details for representation-finite self-injective (RFS) algebras, i.e. basic indecomposable non-simple algebras for which the module category consists only of finitely many indecomposable modules up to isomorphism.  This connection was established via the notion of \emph{simple-minded system} (sms).  Loosely speaking, such a system generates the stable module category in the same way as the set of simple modules.  In fact, we have found that simple-minded systems of RFS algebras are precisely the image of the set of simple modules under a stable equivalence.  We ask if one can find a description on how the \emph{finite} set of simple-minded systems control the \emph{infinite} set of tilting complexes.

Recent advances \cite{AI} in the tilting theory of derived categories suggest that one can systematically construct tilting complexes (which give rise to derived equivalences) by a technique called \emph{mutations}.  For RFS algebras, it is known from \cite{Aiha2,CKL} that every tilting complexes can be obtained by iterative mutations.  For simple-minded systems, there also exists a theory of mutation.  Moreover, the two mutations are compatible with each other, so we can pick out the stable equivalence we want by tracking through the mutations used to obtain the relevant tilting complex.  On the other hand, it has been observed in \cite{AH} that tilting complexes of RFS algebras are given by ``compositions" of two-term tilting complexes, i.e. tilting complexes which concentrated in two consecutive degrees.  This motivates us to compare simple-minded systems and two-term tilting complexes.  In this article, we initiate such a study on a subclass of RFS algebras, namely the uniserial ones, aka the self-injective Nakayama algebras.

Let us be more precise now.  We will work with a finite dimensional self-injective $k$-algebra throughout, where $k$ is an algebraically closed field.
For an RFS algebra $A$, let $\tilt(A)$ (resp. $\twotilt(A)$) denote the set of basic (resp. two-term) tilting complexes concentrated in non-positive degrees, up to homotopy equivalence and shifts.  We denote by $\smss(A)$ the set of simple-minded systems of $A$, and by $\sms_A$ the set of (isoclass representatives of) simple modules of $A$.  There is a natural injection from $\twotilt(A)$ to $\tilt(A)$; this also gives an injection of the corresponding exchange quivers.  From the result of \cite{CKL}, there is a surjection from $\tilt(A)$ to $\smss(A)$, which also gives a surjection on the corresponding exchange quivers.  We are interested in the composition $\frakf$ of these two maps.
We investigate the case when $A$ is Nakayama, i.e. uniserial.
We denote $A_n^\ell$ as the self-injective Nakayama algebra with $n$ simples and of Loewy length $\ell+1$.
For a tilting complex $T$, we will denote $E_T$ the endomorphism algebra of $T$, $F_T:D^b(\rmod{A})\to D^b(\rmod{E_T})$ the corresponding standard derived equivalence \cite{Rickard1991} and $\overline{F_T}:\stmod{A}\to \stmod{E_T}$ the restriction of $F_T$ on the stable categories.  The composition $\frakf$ we are interested in is the map $T\mapsto \stinv{F_T}(\sms_{E_T})$.
The following is our main result.
\begin{theorem}\label{thm-main}
For a self-injective Nakayama algebra $A_n^\ell$ with $\ell\neq \gcd(n,\ell)$ (resp. $\ell=\gcd(n,\ell)$), the map $\frakf:\twotilt(A_n^\ell)\to\smss(A_n^\ell)$ given by $T\mapsto \stinv{F_T}(\sms_{E_T})$ is a bijection (resp. surjection non-bijection).  Moreover, for an indecomposable summand $X$ of $T\in\twotilt(A_n^\ell)$, $\frakf(\mu_X(T))=\mu_S^-(\frakf(T))$ for some (unique) $S\in\frakf(T)$.  In particular, when $\ell\neq\gcd(n,\ell)$, the exchange quiver of $\twotilt(A_n^\ell)$ embeds into that of $\smss(A_n^\ell)$.
\end{theorem}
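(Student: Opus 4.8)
The plan is to translate both sides of $\frakf$ into combinatorics and to show that $\frakf$ becomes the natural bridge between these combinatorial worlds. On the tilting side I would parametrize $\twotilt(A_n^\ell)$ by the triangulations of a once-punctured convex $n$-gon: each two-term tilting complex is recorded by the arcs of a triangulation, and two-term silting mutation $\mu_X$ at an indecomposable summand corresponds to a flip of the associated arc. On the simple-minded side I would use the Riedtmann-type machinery recalled from \cite{CKL}: the stable category of $A_n^\ell$ has stable AR quiver of the form $\mathbb{Z}\DynA_\ell/\langle\tau^n\rangle$, its simple-minded systems are exactly the configurations of this stable translation quiver, and configurations of type $\DynA$ are in turn encoded by Brauer trees. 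Under these encodings, simple-minded mutation $\mu_S^-$ is the corresponding combinatorial mutation of configurations, equivalently a move on the Brauer tree.

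With the models fixed, the core of the proof is to identify $\frakf$ with the natural combinatorial projection $\pi$ from triangulations to configurations. I would anchor this at a basepoint: the trivial complex $A\in\twotilt(A_n^\ell)$ has $E_A=A$ and $F_A=\id$, so $\frakf(A)=\stinv{\id}(\sms_A)=\sms_A$, matching $\pi$ on the triangulation corresponding to $A$. I would then prove the mutation-compatibility clause $\frakf(\mu_X(T))=\mu_S^-(\frakf(T))$ by analysing how a single two-term mutation alters the standard derived equivalence $F_T$ and its stable restriction $\overline{F_T}$: passing from $T$ to $\mu_X(T)$ changes $E_T$ by a one-summand exchange, and the unique $S\in\frakf(T)$ is the simple-minded object whose $\overline{F_T}$-image is the exchanged simple, so that tracking the corresponding flip through $\overline{F_T}$ yields exactly $\mu_S^-$. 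Since flips generate the triangulations and configuration mutations behave the same way, this propagates the identification $\frakf=\pi$ everywhere (using connectivity of the two-term tilting exchange graph, or alternatively a direct computation of $\overline{F_T}$ in the model).

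It then remains to analyse $\pi$ combinatorially. I would show $\pi$ is always surjective — every configuration (equivalently, every Brauer tree in the relevant class) is the image of some triangulation — giving surjectivity of $\frakf$ in all cases. For injectivity I would compute the fibres of $\pi$, which are the orbits of an identification governed by how the rotational symmetry of the punctured polygon interacts with the $\tau^n$-quotient defining the stable translation quiver. This identification is trivial — so that $\pi$, and hence $\frakf$, is a bijection — exactly when $\ell\neq\gcd(n,\ell)$; when $\ell=\gcd(n,\ell)$, i.e. $\ell\mid n$, I would exhibit an explicit fibre of size greater than one, so that $\frakf$ is a non-injective surjection, confirmed by comparing $|\twotilt(A_n^\ell)|$ with $|\smss(A_n^\ell)|$ through the two counts. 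Finally, in the bijective case the embedding of exchange quivers is immediate: $\frakf$ is a vertex bijection, the mutation-compatibility clause makes it a morphism of quivers, and the induced assignment $X\mapsto S$ on the $n$ summands is itself a bijection, so the map on arrows is injective as well.

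The main obstacle, and the technical heart of the argument, will be the mutation-compatibility computation: controlling the stable restriction $\overline{F_T}$ of a standard derived equivalence under a two-term mutation and certifying that it matches one configuration mutation on the nose. Two complications must be confronted here — that a two-term silting mutation of a tilting complex need not remain tilting (so one must restrict to, or recognise, those mutations keeping $E_{\mu_X(T)}$ self-injective Nakayama), and that comparing the flip move with $\mu_S^-$ requires an explicit dictionary between arcs of the triangulation and modules in the configuration. It is precisely in reconciling these two combinatorial mutation rules that the arithmetic threshold $\ell$ versus $\gcd(n,\ell)$ emerges.
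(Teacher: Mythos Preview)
Your overall architecture---encode both sides combinatorially, anchor at $A$, and propagate by mutation---is the paper's, but you have misplaced the difficulty. The mutation-compatibility clause $\frakf(\mu_X(T))=\mu_S^-(\frakf(T))$ is not the technical heart: it is inherited for free from the composition $\twotilt(A)\hookrightarrow\tilt(A)\twoheadrightarrow\smss(A)$, the second arrow being mutation-preserving by \cite{CKL}. What does require a little care is ensuring enough mutations remain two-term, and the paper dispatches this (Lemma~\ref{lem-2tilt-trick}, Proposition~\ref{thm-2tiltAlgo}) by showing that every $T\in\twotilt_-(A)$ is reached from $A$ by irreducible mutations at \emph{stalk} summands, which are automatically two-term.

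The genuine gap is your ``natural combinatorial projection $\pi$''. First, $\twotilt(A_n^\ell)$ is not parametrised by triangulations of a punctured $n$-gon but by \emph{two} copies of $\mathcal{T}(e)$ with $e=\gcd(n,\ell)$ (Theorem~\ref{thm-SINakayama-2tilt}); the sign records whether the stalk summands sit in degree $0$ or $-1$. Second, a triangulation determines only a Brauer tree (Proposition~\ref{prop-triangualtion-to-BTree}), and a Brauer tree determines only a $\tau$-orbit of configurations---so there is no map $\pi$ to configurations on the nose without further input. The paper supplies that input via a \emph{tree-pruning} type invariant (Corollary~\ref{cor-cutTree}): when $m=\ell/e>1$, every sms is either bottom-type or top-type, yielding a splitting $\smss(A)=\smss_-(A)\sqcup\smss_+(A)$ matching $\twotilt_\pm$, and one shows mutation at stalk summands preserves type (Proposition~\ref{prop-respectTauOrbit}). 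Injectivity of each $\frakf_\pm$ then reduces to Lemma~\ref{lem-permute-label} (two complexes with the same Brauer tree differ by a cyclic relabelling of projectives) plus the observation that equal images force that relabelling to be trivial. When $\ell=\gcd(n,\ell)$ (i.e.\ $m=1$) the type dichotomy collapses, and non-injectivity is read off from the cardinality mismatch $\binom{2e}{e}$ versus the Catalan number $\frac{1}{e+1}\binom{2e}{e}$. Your fibre analysis, as written, provides no substitute for the type invariant, and without it you cannot distinguish $\frakf(T)$ from $\Omega\,\frakf(T)$ in the $m>1$ case.
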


Let $A_n^n$ (resp. $A_n^{nk}$) be the symmetric Nakayama algebra with $n$ simples and Loewy length $n+1$ (resp. $nk+1$ for any $k>1$).  The relations between the sets considered in this article are shown in the diagram below.

\[
\xymatrix@R=30pt@C=35pt{
& \twotilt(A_n^{nk}) \ar@{_{(}->}[ld] \ar@{->>}@/^4.2pc/[dddd]|{\mbox{\cite{RS}}} \ar@{<->}[dd]^{\ref{thm-main}} & \mathcal{T}(n)\times\{\pm1\} \ar@{<->}[r]^{\mbox{\cite{AIR,Adachi}}}\ar@{<->}[l]_{\mbox{\cite{AIR,Adachi}}}\ar@{->>}@/^2.5pc/[ldddd]|{\ref{prop-triangualtion-to-BTree}}\ar@{->>}@/_2.5pc/[rdddd]|{\ref{prop-triangualtion-to-BTree}} & \twotilt(A_n^n) \ar@{->>}@/_4.2pc/[dddd]|{\mbox{\cite{RS}}} \ar@{^{(}->}[rd] \ar@{->>}[dd]^{\ref{thm-main}} & \\
\tilt(A_n^{nk})\ar@{->>}[rd]_{\mbox{\cite{CKL}}}\ar@{->>}@/_3.5pc/[rddd]|{\mbox{\cite{Ric2}}}\ar@{<->}@/^2pc/[rrrr] & & & & \tilt(A_n^n)\ar@{->>}[ld]^{\mbox{\cite{CKL}}}\ar@{->>}@/^3.5pc/[lddd]|{\mbox{\cite{Ric2}}} \\
& \smss(A_n^{nk})\ar@{<->}[d]_{\mbox{\cite{CKL}}}\ar@{->>}@/^1pc/[rr] & &  \smss(A_n^n)\ar@{<->}[d]^{\mbox{\cite{CKL}}} & \\
& \mathsf{Conf}(\integer A_{nk}/\langle \tau^n\rangle)\ar@{->>}[d]_{\mbox{\cite{GR,Riedtmann2}}}\ar@{->>}@/^1pc/[rr]|{\ref{prop-surject-config}} & & \mathsf{Conf}(\integer A_n)\ar@{->>}[d]^{\mbox{\cite{GR,Riedtmann2}}} & \\
& \BrTree(n,k)\ar@{->>}@/^1pc/[rr] & & \mathsf{BrTree}(n,1) &
}
\]
In the above picture, we have denoted the set of triangulations on a punctured $n$-disc (Definition \ref{defn-triangulation}) as $\mathcal{T}(n)$, the set of configurations (Definition \ref{def-combinatorial-configuration})
of the translation quiver $\integer A_n$ as $\mathsf{Conf}(\integer A_n)$, the set of Brauer trees (Definition \ref{defn-naka-alg}) with $n$ edges and multiplicity $k$ as $\BrTree(n,k)$.  Also note that all the maps shown preserve mutations on respective sets.

The following connections from the sets shown in the above diagram are worth noting.  In \cite{Adachi}, the author showed a bijection between the set of $n$-part compositions of $n$ with $\mathcal{T}(n)$.  This set is of particular interest to, for example, representations theory coming from enumerative combinatorics and Lie theory.  In \cite{AIR}, it was shown that the set $\twotilt(\Lambda)$ corresponds to the set of functorially finite torsion classes of $\rmod{\Lambda}$.  In \cite{BLR}, a corollary of the main theorem is that configurations of $\integer A_n$ corresponds to (multiplicity-free) tilting modules of the quiver algebra $kA_n$.  In \cite{Reading}, it is shown that configurations of $\integer A_n$ can be interpreted as non-crossing partitions, which appear in many other contexts in representation theory and combinatorics.  It will be interesting to find implications of our result on these areas.

A corollary of the main theorem is that every simple-minded system can be obtained from a derived equivalence given by a two-term tilting complex.  This result is not apparent from the definition of $\frakf$, and is false even for RFS algebras in general.
\begin{corollary}
Let $A$ be a self-injective Nakayama algebra and $F:D^b(\rmod{A})\to D^b(\rmod{B})$ be a derived equivalence.  Then there is a two-term tilting complex $T\in\twotilt(A)$ which gives rise to the following diagram.
$$
\xymatrix@R=8pt{
\stmod{E_T}\; \ar[r]^{\overline{F_T}} & \stmod{A} & \;\stmod{B} \ar[l]_{\overline{F}}\\
\text{\llap{\{simple $E_T$}-modules\}} \ar@{|->}[r] & \quad\sms\quad & \text{\{simple \rlap{$B$-modules\}}} \ar@{|->}[l]
}
$$
where $\overline{F_T}$ and $\overline{F}$ are the induced stable equivalences and $n\in\integer$.  In particular, $B\isom E_T$.
\end{corollary}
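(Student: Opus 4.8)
The plan is to deduce the corollary from the surjectivity of $\frakf$ proved in Theorem \ref{thm-main}, combined with the rigidity of the configuration/Brauer-tree classification of representation-finite self-injective algebras. The shape of the argument is: read off from $F$ a simple-minded system $\sms$ of $A$, realize that \emph{same} $\sms$ by a two-term tilting complex $T$, and then recover the isomorphism $B\isom E_T$ from the fact that $\sms$ determines the algebra whose simple modules realize it. The commutativity of the asserted diagram is then essentially built into the choice of $T$, so the real content is the final isomorphism.

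First I would observe that for any triangle equivalence $F\colon D^b(\rmod A)\to D^b(\rmod B)$ the algebra $B$ is again self-injective and $F$ descends to a stable equivalence, \emph{without} any standardness hypothesis. Indeed $F$ preserves compact objects, hence carries $K^b(\proj A)$ onto $K^b(\proj B)$, so it induces an equivalence on the Verdier quotients; for self-injective algebras these quotients are the stable module categories, which produces the stable equivalence between $\stmod A$ and $\stmod B$. Writing $\overline{F}\colon\stmod B\to\stmod A$ for the resulting functor as drawn in the diagram, and using that a stable equivalence sends a simple-minded system to a simple-minded system, the set $\sms:=\overline{F}(\sms_B)$ is a simple-minded system of $A$. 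This is the $\sms$ appearing in the statement.

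Next I would invoke the surjectivity half of Theorem \ref{thm-main}: in either case $\ell\neq\gcd(n,\ell)$ or $\ell=\gcd(n,\ell)$ the map $\frakf\colon\twotilt(A)\to\smss(A)$ is onto. Applying it to the simple-minded system $\sms$ just produced yields $T\in\twotilt(A)$ with $\frakf(T)=\stinv{F_T}(\sms_{E_T})=\sms$. Unwinding the definition, $\overline{F}$ carries $\sms_B$ to $\sms$ and $\stinv{F_T}$ carries $\sms_{E_T}$ to $\sms$, which is exactly the commuting diagram claimed. Composing $\overline{F}$ with a quasi-inverse of $\overline{F_T}$ then gives a stable equivalence $\Psi\colon\stmod B\to\stmod{E_T}$ with $\Psi(\sms_B)=\sms_{E_T}$, i.e. carrying the simple $B$-modules bijectively onto the simple $E_T$-modules.

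It remains to promote $\Psi$ to an algebra isomorphism $B\isom E_T$, and this is where I expect the real work to lie, since a stable equivalence preserving simples is a priori far weaker than a Morita equivalence --- indeed the analogous statement is false for general RFS algebras, which is precisely why the Nakayama-specific surjectivity of $\frakf$ onto \emph{two-term} complexes is the essential new input. By Rickard's derived Morita theory \cite{Rickard1991}, $B$ is the endomorphism algebra of a tilting complex over $A$, so $B$ and $E_T$ are both representation-finite self-injective algebras whose stable Auslander--Reiten quiver has the same type as that of $A$, namely $\integer A_n$ (resp. $\integer A_{nk}/\la\tau^n\ra$). A stable equivalence induces an isomorphism of stable AR quivers, and, using the naturality of the identification $\smss\isom\Conf$ of \cite{CKL} with respect to stable equivalences, the configurations cut out by $\sms_B$ and by $\sms_{E_T}$ are transported to the \emph{same} configuration $\sms$ of $\integer A_n$. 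By the structure theorem of Gabriel--Riedtmann \cite{GR,Riedtmann2} a representation-finite self-injective algebra is determined up to isomorphism by its stable AR quiver together with this configuration (equivalently by the associated Brauer tree, obtained via $\Conf\onto\BrTree$ in the diagram), so $B\isom E_T$ follows. The one point I would take care to verify is exactly this naturality of $\smss\isom\Conf$, so that $\Psi$ genuinely transports one configuration onto the other and not merely onto an automorphic image of it.
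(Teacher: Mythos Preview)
Your argument is correct and follows the paper's line exactly up to the final step: produce $\sms=\overline{F}(\sms_B)$, hit it with the surjectivity of $\frakf$ from Theorem~\ref{thm-main} to get $T$, and then compose the two induced stable equivalences to obtain $\Psi\colon\stmod B\to\stmod{E_T}$ carrying simples to simples. The difference is in how you extract $B\isom E_T$ from this last fact.

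The paper invokes Linckelmann's theorem \cite{Linckelmann1996}: a stable equivalence of Morita type between self-injective algebras that sends simple modules to simple modules is already a Morita equivalence. Since both $\overline{F}$ and $\overline{F_T}$ arise from derived equivalences, they (and hence $\Psi$) are of Morita type, so Linckelmann applies immediately and $B\isom E_T$. Your route instead appeals to the Gabriel--Riedtmann classification, arguing that $B$ and $E_T$ are standard RFS algebras with the same stable AR-quiver type and, via the common image $\sms$, the same configuration, hence are isomorphic. This is valid here because everything derived equivalent to a self-injective Nakayama algebra is a Brauer tree algebra and therefore standard, but it does require you to keep track of the standard/non-standard dichotomy and the precise naturality you flag at the end. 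Linckelmann's theorem sidesteps all of that in one stroke, which is why the paper's proof is a single sentence; on the other hand, your approach stays entirely within the combinatorial framework of configurations already set up in the paper, so it is arguably more self-contained.
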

\begin{proof}
This follows from combining Theorem \ref{thm-main}, and Linckelmann's theorem \cite{Linckelmann1996}. (See also Theorem B and C from \cite{CKL}.)
\end{proof}

In the next section \ref{sec:prelim}, we will go through definitions of the objects of interest in this article.  We will also present some lemmas required for the proof of the main theorem.  We interpret a combinatorial observation, originally from \cite{Riedtmann3}, as a reduction scheme to study simple-minded systems of self-injective Nakayama algebras.  As noted before, a simple-minded system for a RFS algebra comes from the image of simple modules under some stable equivalence.  For the same stably equivalent algebra, one can find different stable equivalences giving different simple-minded systems.  The interpretation we found will help us distinguish between these simple-minded systems, which is necessary for tracking changes of simple-minded system under mutations.

In section \ref{sec:2tilt-mutations}, we will study the mutations used in this article.  Then we will introduce a combinatorial description of two-term silting complexes for self-injective Nakayama algebras found by Adachi \cite{Adachi}.  We will use this to obtain a combinatorial description of two-term tilting complexes via rotational-symmetric triangulations on a punctured disc, refining Adachi's result.  We will use mutation of the  Brauer tree \cite{KZ,Kauer,Aiha} to obtain a sequence of mutations needed to acquire a two-term tilting complex from the original algebra.  Our sequence of mutations guarantees that at each mutation step, the mutated tilting complex stays in $\twotilt(A_n^\ell)$, and so refines a result of \cite{KZ}.  We then give some detailed observations on how simple-minded systems change under mutation.  Finally, we present the proof of the main theorem \ref{thm-main} in section \ref{sec:2tilt-proof}.

\section{Preliminaries}\label{sec:prelim}
\subsection{Sms's, configurations, silting complexes}
Let $\stmod{A}$ denote the stable module category of an algebra $A$.  This is the category with the same objects as $\rmod{A}$ but with Hom-space $\sthom_A(X,Y):=\Hom_{\stmod{A}}(X,Y)$ given by $\Hom_A(X,Y)$ modulo the morphisms which factor through projective modules.  Two algebras are \emph{stable equivalent} if their stable module categories are equivalent.  It is well-known that $\stmod A$ is a triangulated category if and only if $A$ is self-injective.  The suspsension functor for $\stmod{A}$ is the inverse syzygy $\Omega^{-1}$.  For any collections $\sms_1, \sms_2$ of objects in $\stmod{A}$, we define a collection of objects
\[
\sms_1\ast\sms_2 = \{ X\in\catT | \exists \mbox{distinguished triangle } S_1\to X\to S_2\to \Omega^{-1}S_1 \mbox{ with }S_1\in\sms_1, S_2\in\sms_2\}
\]
For a collection $\sms$ of objects in $\stmod{A}$, we denote $(\sms)_0=\{0\}$, and for $n\in\integer_{> 0}$ define inductively $(\sms)_n = (\sms)_{n-1} \ast \sms\cup\{0\}$.  Similarly, one can define ${}_n(\sms)$, but it can be shown that ${}_n(\sms) = (\sms)_n$ for any $n\geq 0$ \cite[Lemma 2.2]{D3}.  For a full subcategory $\catC$ of $\catT$ (we will always identify $\catC$ with the set of its objects), we say $\catC$ is \emph{extension closed} if $\catC\ast\catC \subset \catC$.  We define the \emph{filtration closure} (or \emph{extension closure}) of a collection $\sms$ of objects of $\stmod{A}$ as $\filt(\sms):=(\bigcup_{n\geq 0} (\sms)_n)$, which is the smallest extension closed full subcategory of $\stmod{A}$ containing $\sms$ \cite[Lemma 2.3]{D3}.

\begin{definition}\label{defn-sms}
A collection $\sms$ of objects in $\stmod{A}$ is a \emph{simple-minded system} of $A$ if $\filt(\sms)=\stmod{A}$, and $\sms$ is a system of pairwise orthogonal stable bricks, i.e.
\begin{equation}\label{eqn-sms-ortho-axiom}
\sthom_A(S,T) = \left\{\begin{array}{ll} 0 & (S\neq T), \\
k & (S=T).\end{array}\right.
\end{equation}
We denote by $\smss(A)$ the collection of all simple-minded systems of $A$.
\end{definition}
The definition presented above is taken from \cite{D3}.  This is different from the original definition of simple-minded system from \cite{KL}.  In \cite{KL}, simple-minded system is defined for stable module category of any artinian algebra, which does not necessarily possess any triangulated structure.  Nevertheless, we are only interested in stable module category of a self-injective algebra here.  In which case, the two definitions are compatible with each other.  We will abbreviate simple-minded system by \emph{sms} from now on.

\begin{definition}[\cite{Riedtmann2}]\label{def-combinatorial-configuration}
Let $\Gamma=\mathbb{Z}Q/\Pi$ be a stable translation quiver of tree class Dynkin quiver $Q$. A \emph{configuration} $\mathcal{C}$ is a set of vertices of $\Gamma$ which
satisfies the following conditions:
\begin{enumerate}
\item For any $e, f\in \mathcal{C}$,
$\Hom_{k(\Gamma)}(e,f)= \left\{\begin{array}{ll} 0 & (e\neq f), \\
k & (e=f).\end{array}\right.$
\item For any $e\in\Delta_0$, there exists some $f\in \mathcal{C}$ such that $\Hom_{k(\Gamma)}(e,f)\neq 0$.
\end{enumerate}
Here $k(\Gamma)$ is the mesh category of the stable translation quiver $\Gamma$.
\end{definition}
In this article, we will only look at Dynkin quiver of type $\mathbb{A}_n$.  We denote such a quiver $A_n$, which should not be confused with the notation for self-injective Nakayama algebras.

\begin{definition}[\cite{AI}]\label{defn-silting-obj}
Let $\catT=K^b(\proj{A})$ be the bounded homotopy category of complexes over finitely generated projective $A$-modules.  We call a complex $T\in\catT$ a \emph{silting} (resp. \emph{tilting}) if the smallest thick subcategory of $\catT$ containing $T$ is $\catT$ itself, and $\Hom_\catT(T,T[i])=0$ for any $i>0$ (resp. $i\neq 0$).
We denote the set of silting (resp. tilting) complexes over $A$ as $\silt(A)$ (resp. $\tilt(A)$).
\end{definition}
As noted in \cite{KY}, a silting complex is tilting if and only if it is Nakayama-stable (stable under the Nakayama functor).

\subsection{Self-injective Nakayama and Brauer tree algebras}
\begin{definition}[see for example \cite{GR,ARS,ASS}]\label{defn-naka-alg}
\begin{enumerate}
\item A \emph{self-injective Nakayama algebra} $A_n^\ell$ with $n$ simples and Loewy length $\ell+1$ is given by the path algebra $kQ/I$ with quiver
\begin{equation}\label{eqn-quiver}
Q:\quad \xymatrix@R=10pt{
& 1\ar[r]^{\alpha_1} & n \ar[rd]^{\alpha_n} &\\
2\ar[ur]^{\alpha_2} & & & n-1 \ar@/^1pc/@{.}[lll]
}
\end{equation}
and relation ideal $I=\Rad^{\ell+1}(kQ)$.

\item A \emph{Brauer graph} $G$ is a finite undirected connected graph (possibly with loops and multiple edges) with the following data.  To each vertex we assign a cyclic ordering of edges incident to it, and a positive integer called \emph{multiplicity}.

\item A \emph{Brauer tree} is a Brauer graph which is a tree, having at most one vertex with multiplicity greater than one.  If there is such vertex, it is called \emph{exceptional vertex}, otherwise we say the Brauer tree has trivial multiplicity.  Traditionally, we choose the counter-clockwise direction as the cyclic ordering of edges; and denote the Brauer tree as $(G,v,m)$ for a tree $G$ with exceptional multiplicity $m$ at the exceptional vertex $v$.  For simplicity, we usually just use $G$ as the notation for this triple.

\item A finite dimensional algebra $A$ is a \emph{Brauer tree algebra} associated to a given Brauer tree $(G,v,m)$, if there is a one-to-one correspondence between the edges $j$ of $G$ and the simple $A$-modules $S_j$ in such a way that the projective cover $P_j$ of $S_j$ has the following description.  We have $P_j/\Rad P_j\isom \Soc P_j \isom S_j$, and the heart $\Rad P_j/\Soc P_j$ is a direct sum of two (possibly zero) uniserial modules $U_j$ and $W_j$ corresponding to the two vertices $u$ and $w$ at the end of the edge $j$.  If the edges around $u$ are cyclically ordered $j,j_1,j_2,\ldots, j_r,j$ and the multiplicity of the vertex $u$ is $m_u$, then the corresponding uniserial module $U_j$ has composition factors (from the top) $S_{j_1}, S_{j_2}, \ldots,  S_{j_r}$, $S_j, S_{j_1}, \ldots, S_{j_r}$, $S_j, \ldots, S_{j_r}$ so that $S_{j_1},\ldots, S_{j_r}$ appear $m_u$ times and $S_j$ appears $m_u-1$ times.  We denote the basic algebra associated to a Brauer tree $G$ with $e$ edges and exceptional multiplicity $m$ as $B_{e,m}^G$.
\end{enumerate}
\end{definition}

A \emph{Brauer star}, which we usually denote by $\star$, is a Brauer tree where the underlying graph is a star, with exceptional vertex at the centre, the corresponding Brauer tree algebra is called Brauer star algebra.  Note that the class of Brauer star algebras coincides with the class of symmetric Nakayama algebras, i.e. $B_{e,m}^\star = A_e^{em}$ for any $e,m\geq 1$, and so we fix the quiver and relation presentation for Brauer star algebra with the one given by Nakayama algebra.

If $n$ is the number of simple modules for, we denote $\overline{i}$ to be the positive integer in $\{1,\ldots,n\}$ with $i\equiv \overline{i}$ mod $n$ for any $i\in\integer$.  Now the radical of projective indecomposable $P_i$ of a self-injective Nakayama algebra has projective cover $P_{\overline{i+1}}$.  All indecomposable $A_n^\ell$-modules are uniserial, hence uniquely determined by its socle and Loewy length, and so we denote $M_{i,l}$ with $i\in\{1,\ldots, n\}$ and $1 \leq l \leq \ell+1$.  For any $i$ and any $l \leq \ell$, the Auslander-Reiten translate $\tau\isom \nu_{A_n^\ell}\Omega^2$ sends $M_{i,l}$ to $M_{\overline{i+1},l}$, where $\nu_{A_n^\ell}$ is the Nakayama functor of $A_n^\ell$.  The Heller translate $\Omega$, which is the inverse of suspension functor in the triangulated category $\stmod{A_n^\ell}$, sends $M_{i,l}$ to $M_{\overline{i-l+1},\ell+1-l}$; and inverse Heller translate $\Omega^{-1}$ sends $M_{i,l}$ to $M_{\overline{i-l},\ell+1-l}$.  The Nakayama functor $\nu_{A_n^\ell}$ sends $M_{i,l}$ to $M_{\overline{i+e},l}$ where $e=\gcd(n,\ell)$.

From now on, we fix the coordination of the stable AR-quiver of $A_n^\ell$ using the pair appearing in the subscript of an indecomposable $A_n^\ell$-module.  Thus the simple $A_n^\ell$-modules lie on the bottom rim of the stable AR-quiver, and radical of projective indecomposable $A_n^\ell$-modules lie on the top rim of the stable AR-quiver.  Note that the stable AR-quiver ${}_s\Gamma_{A_n^\ell}$ is isomorphic to the stable tube $\integer A_\ell/\langle \tau^n\rangle$, 
so by \cite[3.6]{CKL}, we can identify $\smss(A_n^\ell)$ with the set of configurations of $\integer A_\ell/\langle \tau^n\rangle$ which are $\tau^{n\integer}$-stable.

We note that every configuration of $\integer A_n$ is $\tau^{n\integer}$-stable, i.e. configurations of $\integer A_n$ can be thought as configurations of $\integer A_n/\langle \tau^n\rangle$ (and vice versa) by taking modulo $n$ in the $x$-coordinate of the vertices.  In general, the configurations of $\integer A_\ell/\langle \tau^n\rangle$ correspond to the configurations of $\integer A_\ell/\langle \tau^e\rangle$ for $e=\gcd(\ell,n)$.  In particular, the AR-theory of $A_n^\ell$ can be thought as ``controlled" by the AR-theory of $A_e^\ell=A_e^{em}$, which is a Brauer tree algebra.  Due to the representation-finiteness nature of the algebras we work with, this means the (stable) module categories of self-injective Nakayama algebras are ``controlled" by that of Brauer tree algebras.  So one can very often lift a result for symmetric Nakayama algebras to self-injective Nakayama algebras.  Such technique are usually called \emph{covering theory}.

\begin{example}
The following is the stable AR-quiver of $B_{3,2}^\star = A_3^6$ (we omit the symbol $M$):
\[
\xymatrix@C=2pt@R=10pt{
  &  &  &  &  & (3,6) \ar[rd]& & (2,6) \ar[rd] & & (1,6)\ar[rd] & & (3,6) \\
  &  &  &  & (3,5) \ar[ru]\ar[rd]& & (2,5) \ar[ru]\ar[rd] & & (1,5) \ar[ru]\ar[rd] & & (3,5)\ar[ru]\\
  &  &  & (3,4) \ar[ru]\ar[rd]& & (2,4) \ar[ru]\ar[rd] & & (1,4) \ar[ru]\ar[rd] & & (3,4)\ar[ru]\\
  &  & (3,3) \ar[ru]\ar[rd]& & (2,3) \ar[ru]\ar[rd] & & (1,3) \ar[ru]\ar[rd] & & (3,3)\ar[ru]\\
  & (3,2) \ar[ru]\ar[rd]& & (2,2) \ar[ru]\ar[rd] & & (1,2) \ar[ru]\ar[rd] & & (3,2)\ar[ru]\\
(3,1) \ar[ru] & & (2,1) \ar[ru] & & (1,1) \ar[ru] & & (3,1)\ar[ru]
}
\]
$\{(1,1),(2,1),(3,1)\}$ is the set of simple $B_{3,2}^\star$-module.  Another example of simple-minded system (i.e. configuration) is $\sms=\{(1,1),(2,3),(3,5)\}$, the (unique) Brauer tree algebra $B$ such that $\sms$ is a $B$-simple-image is the one associated to the graph (tree) of a line with exceptional vertex at the end of the line, i.e. $\xymatrix@C=15pt{\bullet \ar@{-}[r]& \circ\ar@{-}[r] & \circ\ar@{-}[r] & \circ }$.
\end{example}
We will use this coordination throughout this article.  Note that our coordination is slightly different from the conventional one, where the ``x-axis'' goes from left to right, the transformation from our coordination to the conventional one is $(x,y)\mapsto (em-x,y)$.

\subsubsection{Sms's of Brauer tree algebras}\label{sec:2tilt-Btree1}
In the following, we play with the combinatorics of configurations to give us some tools which will be useful later in the proof of the theorem \ref{thm-main} in the symmetric case.  Recall that an \emph{extremal vertex} of Brauer tree $G$ is a vertex of valency 1; we call the edge connected to an extremal vertex as \emph{leaf}.

\begin{lemma}\label{lem-simpleLeaf}
Let $G$ be a Brauer tree and $S$ a simple $B_{e,m}^G$-module.  Then $S$ lies on the rim of the stable AR-quiver, if and only if, the edge in $G$ which corresponds to $S$ is a leaf attached to a non-exceptional extremal vertex.
\end{lemma}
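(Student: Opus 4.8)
The plan is to translate ``lying on the rim'' into an intrinsic property of the stable Auslander--Reiten quiver and then read that property off from the projective cover of $S$. Recall that the stable AR-quiver of $B_{e,m}^G$ is the tube $\integer A_{em}/\la\tau^e\ra$, in which the vertices of the two rims (the rows $l=1$ and $l=em$ in our coordinates) are exactly the vertices of in-degree one in the mesh, while the interior vertices have in-degree two. Since a stable equivalence is an isomorphism of stable AR-quivers, $S$ lies on the rim if and only if there is exactly one arrow ending at $S$ in the stable AR-quiver of $B_{e,m}^G$; equivalently, the almost split sequence $0\to\tau S\to E\to S\to 0$ has exactly one non-projective indecomposable summand in its middle term $E$. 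This reduces the lemma to counting the arrows into the simple module $S$, and in particular treats the two rims uniformly (one should \emph{not} expect the two cases to be distinguished intrinsically, since the projectives of a non-star Brauer tree algebra do not all sit on one rim of the tube).

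The next step is to compute this number from the tree. Write $S=S_j$ for the edge $j$ with endpoints $u,w$, so that $\Omega S_j=\Rad P_j$ and $\Rad P_j/\Soc P_j=U_j\oplus W_j$ as in Definition \ref{defn-naka-alg}. Since $S_j$ is simple, every irreducible morphism into $S_j$ is an epimorphism, and the claim is that $E$ decomposes as a direct sum of one indecomposable module for each non-zero heart summand, namely $\Rad P_{j_x}$ for each \emph{non-extremal} endpoint $x$ (where $j_x\neq j$ is the edge following $j$ in the cyclic order around $x$, so that $\Top U_j$ or $\Top W_j$ is the simple $S_{j_x}$), together with $\Rad P_j$ itself if an endpoint $x$ is an \emph{extremal exceptional} vertex (in which case $U_j=S_j/\cdots/S_j$ has top $S_j$). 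No projective summand occurs, since a projective summand in $E$ would force $S_j\isom P'/\Soc P'$ for some indecomposable projective $P'$ of length $2$, which exists only in the degenerate one-edge case $em=1$. Consequently the number of arrows into $S_j$ equals the number of the two modules $U_j,W_j$ that are non-zero, which by the description of the heart in Definition \ref{defn-naka-alg} is exactly the number of endpoints of $j$ that are \emph{not} non-exceptional extremal vertices.

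Combining the two steps, $S_j$ lies on the rim if and only if exactly one endpoint of $j$ fails to be a non-exceptional extremal vertex; since at least one endpoint must then be a non-exceptional extremal vertex, and since for $em\geq 2$ an edge cannot have both endpoints extremal (that forces the one-edge tree, i.e.\ $B_{1,1}^G=A_1^1$), this happens precisely when $j$ is a leaf attached to a non-exceptional extremal vertex, as claimed. The single exceptional case $A_1^1$ is checked directly: the tube $\integer A_1/\la\tau\ra$ has one level on which its unique module sits, so $S$ lies on the (unique) rim, and the unique edge is indeed a leaf at a non-exceptional extremal vertex, so the equivalence still holds.

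The main obstacle is the decomposition of the middle term $E$ asserted in the second step: one must pin down the almost split sequence ending at the simple module $S_j$ over a Brauer tree algebra, verify that its only indecomposable summands are the modules $\Rad P_{j_x}$ attached to the non-vanishing heart summands, and carry out the bookkeeping at the exceptional vertex, where the relevant heart summand is the non-trivial self-extension $S_j/\cdots/S_j$ rather than a single composition factor, so that the corresponding source is $\Rad P_j$ itself. This is best done by knitting the AR-quiver outward from the rim and identifying the irreducible epimorphisms onto $S_j$ one endpoint at a time. A cleaner but less self-contained alternative would invoke the explicit Gabriel--Riedtmann correspondence \cite{GR,Riedtmann2} between Brauer trees and configurations of $\integer A_{em}$ and read the height of the configuration vertex attached to $j$ directly off the tree.
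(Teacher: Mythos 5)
Your overall architecture (rim $\Leftrightarrow$ in-degree one in the stable AR-quiver; in-degree $=$ number of non-vanishing heart summands of $P_j$; heart summand vanishes $\Leftrightarrow$ the corresponding endpoint is a non-exceptional extremal vertex) is sound and the middle count is in fact true, but the justification you give for it is both deferred --- you yourself flag the decomposition of $E$ as ``the main obstacle'', to be settled later by knitting --- and, as stated, false. Take $G=\star$ with $e=3$, $m=1$, so $B_{3,1}^\star=A_3^3$, and $j=1$. The almost split sequence ending at $S_1$ is $0\to S_2\to M_{2,2}\to S_1\to 0$: its middle term is the uniserial module of length $2$ with top $S_1$ and socle $S_2$. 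Your recipe instead predicts the summand $\Rad P_2=M_{2,3}$, which has length $3$ and top $S_3$, so it admits no surjection onto $S_1$ at all. The underlying problem is that $\Rad P_{j_x}$ maps onto $S_j$ only when the vertex $x$ has valency $2$ (only then is $j$ again the successor of $j_x$ around $x$); similarly, at an exceptional extremal endpoint the source of the irreducible map is $\Rad P_j$ only when $m=2$ --- for $A_1^3$ the arrow into $S_1$ comes from $M_{1,2}$, while the map $\Rad P_1=M_{1,3}\to S_1$ factors through $M_{1,2}$ and is not irreducible. So the almost split sequence ending at $S_j$ is genuinely harder to pin down than your sketch suggests, and as written your proof has a hole exactly at its load-bearing step.

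The repair is to avoid that sequence altogether, which is what the paper does. For a self-injective algebra there is one almost split sequence you get for free, namely the textbook one with projective-injective middle term,
$0 \to \Rad P_j \to P_j\oplus \Rad P_j/\Soc P_j \to P_j/\Soc P_j \to 0$,
i.e. the sequence ending at $\Omega^{-1}S_j$ (see e.g. \cite{ARS}). From it you read off, with no computation, that the in-degree of $\Omega^{-1}S_j$ in the stable AR-quiver equals the number of indecomposable summands of the heart $\Rad P_j/\Soc P_j=U_j\oplus W_j$, which is exactly your count of non-vanishing heart summands from Definition \ref{defn-naka-alg} --- but attached to $\Omega^{-1}S_j$ rather than $S_j$. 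Since $\Omega^{-1}$ is a stable self-equivalence, it induces an automorphism of the stable AR-quiver and hence preserves in-degrees (equivalently, sends rim vertices to rim vertices), so $S_j$ lies on a rim if and only if $\Omega^{-1}S_j$ does. Splicing this into your first and last steps yields a complete proof, and with that substitution your argument coincides with the paper's: the paper phrases it as ``$P_j$ is uniserial iff $\Omega(S_j)$ and $\Omega^{-1}(S_j)$ lie on a rim iff $S_j$ lies on a rim''.
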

\begin{proof}
By the construction of Brauer tree algebras, an edge is a leaf attached to a non-exceptional extremal vertex if and only if the corresponding indecomposable projective module is uniserial.  Also note that for any simple $B_{e,m}^G$-module $S$, whose projective cover is $P$, we have almost split sequence starting at $\Omega(S)$ being:
$$
0 \to \Omega(S) \to P\oplus \Rad(P)/\Soc(P) \to \Omega^{-1}(S) \to 0
$$
This says that $P$ is uniserial if and only if, $\Omega(S)$ and $\Omega^{-1}(S)$ is on a rim of the stable AR-quiver, say located (without loss of generality) at $(1,em), (e,em)$ respectively, which in turns is equivalent to $S$ located at $(1,1)$, i.e. another rim of the stable AR-quiver.
\end{proof}

We need some combinatorial lemma about configurations:
\begin{lemma}[\cite{Riedtmann3}, Lemma 2.5]
A set $\config$ in the vertex set $(\integer A_n)_0$ of $\integer A_n$ is a configuration of $\integer A_n$ if and only if 
$\omega_n\config \cup \tau^{(n+1)\integer}(0,1)$ is a configuration of $\integer A_{n+1}$, where $\omega_n$ is an injection $\omega_n:(\integer A_n)_0 \to (\integer A_{n+1})_0$ given by:
\begin{equation*}
\omega_n(x,y) = \begin{cases}
(x,y) & \text{if }0\leq x-y < x\leq n \\
(x,y+1) & \text{if }0<x<y
\end{cases}
\end{equation*}
and by the rule $ \omega_n\tau^n = \tau^{n+1}\omega_n$.
\end{lemma}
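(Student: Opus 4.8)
The plan is to verify the two configuration axioms directly in a combinatorial model of the mesh category, exploiting the fact that for a translation quiver of type $\mathbb{A}$ the Hom-spaces are at most one-dimensional and are governed by an explicit ``hammock'' inequality. First I would fix the coordinates of the preceding subsection and identify each vertex $(x,y)$ of $\integer A_m$ with the interval $[x-y+1,\,x]\subset\integer$ of length $y$; under this identification the two families of irreducible maps become the one-step extension and one-step truncation of intervals, $\tau$ becomes the shift $[p,q]\mapsto[p+1,q+1]$, and the standard description of the mesh category records that $\Hom_{k(\integer A_m)}([p,q],[p',q'])$ is one-dimensional exactly when $p'\le p\le q'\le q$, and zero otherwise. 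In these terms the rim (simple) vertices are the length-one intervals $[m,m]$, and the added orbit $\tau^{(n+1)\integer}(0,1)$ is the $\tau^{n+1}$-orbit of the simple $[0,0]$.

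The key observation is that on the fundamental domain for $\langle\tau^n\rangle$ given by the box $1\le x,y\le n$, the map $\omega_n$ is precisely the effect on endpoints of the strictly order-preserving injection $\iota\colon\integer\to\integer\setminus\{0\}$ defined by $\iota(t)=t$ for $t\ge 1$ and $\iota(t)=t-1$ for $t\le 0$: its first branch fixes intervals whose left endpoint is $\ge 1$, while its second branch decrements the left endpoint of intervals whose left endpoint is $\le 0$. Thus $\omega_n[a,b]=[\iota(a),\iota(b)]$ on the fundamental domain, extended $\tau$-equivariantly via $\omega_n\tau^n=\tau^{n+1}\omega_n$; geometrically $\omega_n$ simply inserts one new position $0$ per $\tau^n$-period, and the added orbit occupies exactly the created vacancy. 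Because $\iota$ is a strict order-embedding, the hammock inequality is both preserved and reflected, so $\Hom_{k(\integer A_{n+1})}(\omega_n e,\omega_n f)\neq 0$ if and only if $\Hom_{k(\integer A_n)}(e,f)\neq 0$ for all vertices $e,f$; this is what makes the whole correspondence reversible.

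For axiom (1) the preserve-and-reflect property turns orthogonality of $\config$ into orthogonality of $\omega_n\config$, while a short endpoint computation shows that the new simple $[0,0]$ is a brick orthogonal to all of its nontrivial $\tau^{n+1}$-translates (whose positions lie in $(n+1)\integer$, disjoint from the admissible range) and orthogonal in both directions to every element of $\omega_n\config$ (whose endpoints, lying in $\iota(\integer)=\integer\setminus\{0\}$, are never $\equiv 0\bmod(n+1)$). For axiom (2) I would partition the vertices of $\integer A_{n+1}$ into the $\omega_n$-images (both endpoints $\not\equiv 0\bmod(n+1)$) and the ``new'' vertices (an endpoint $\equiv 0$, reduced to $0$ by $\tau^{n+1}$-equivariance). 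The images are covered by $\omega_n\config$ by reflection of the covering property of $\config$; the new vertices $[0,q]$ with left endpoint $0$ map nonzero to the new orbit; and the remaining new vertices $[p,0]$ with $-n\le p\le -1$ are covered by pulling back to the genuine $\integer A_n$-vertex $[p+1,0]$, since any configuration element $f$ covering $[p+1,0]$ automatically satisfies the hammock inequality against $[p,0]$.

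The converse runs the same chain of equivalences backwards; the one point needing care is that an $\omega_n$-image can never be covered by the new orbit (again because its left endpoint is never $\equiv 0\bmod(n+1)$), so every covering of $\omega_n g_0$ must come from $\omega_n\config$ and hence reflects to a covering of $g_0$. I expect the main obstacle to be exactly the covering axiom for the boundary vertices $[p,0]$ that are not themselves in the image of $\omega_n$: unlike the other cases this forces one to produce a covering configuration element indirectly, through the auxiliary vertex $[p+1,0]$, and to check that inserting the new position does not destroy the hammock relation. Keeping the piecewise definition of $\omega_n$ consistent with its $\tau$-equivariant extension across the ``seam'' at position $0$ is the principal bookkeeping burden throughout.
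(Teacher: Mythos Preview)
The paper does not give a proof of this lemma at all: it is simply quoted from \cite{Riedtmann3} and then immediately specialised (via covering theory) to the $\tau^{e\integer}$-stable situation. So there is nothing in the paper to compare your argument against.

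That said, your proposal is a sound direct proof. The interval model you set up---identifying $(x,y)$ with $[x-y+1,x]$ and reading the Hom-hammock as $p'\le p\le q'\le q$---is the standard combinatorial picture for the mesh category of $\integer A_m$, and your observation that $\omega_n$ is exactly ``apply the order-embedding $\iota$ to both endpoints'' is the correct reformulation of Riedtmann's piecewise formula. The orthogonality checks go through because endpoints of elements of $\omega_n\config$ are never $\equiv 0\pmod{n+1}$, while the inserted orbit sits at those positions; and your covering argument for the boundary vertices $[p,0]$ via the auxiliary $\integer A_n$-vertex $[p+1,0]$ is correct (one checks that if $f=[a,b]\in\config$ covers $[p+1,0]$, then $a,b\le 0$ so $\omega_n f=[a-1,b-1]$, and the hammock inequality for $[p,0]$ follows). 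The converse direction is, as you say, the same chain of equivalences in reverse, using that the inserted orbit can never cover an $\omega_n$-image.

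The only place I would tighten the write-up is the bookkeeping across the seam: when you pass from ``$[p+1,0]$ in $\integer A_n$'' to ``$\omega_n f$ covers $[p,0]$ in $\integer A_{n+1}$'', make explicit that $[p+1,0]$ lies one $\tau$-step outside your chosen fundamental domain for $\integer A_n$, so the comparison really uses the equivariance rule $\omega_n\tau^n=\tau^{n+1}\omega_n$ once. This is exactly the point you flagged as the principal bookkeeping burden, and it deserves one explicit line rather than being left implicit.
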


Using covering theory, we specialise this lemma for configurations of $\integer A_{em}$ which are stable under $\tau^{e\integer}$, one can see the lemma can be refined to the following.
\begin{lemma}
A set $\config$ of vertices in $(\integer A_{em}/\langle \tau^e\rangle)_0$ is a configuration of $\integer A_{em}/\langle \tau^e\rangle$ if and only if
\begin{equation*}
\config^+ := \omega_e^{(m)} \config \cup \{(e+1,1)\}
\end{equation*}
is a configuration of $\integer A_{(e-1)m}/\langle \tau^{e-1}\rangle$ where $\omega_e^{(m)}$ is an injection $\omega_e^{(m)}:(\integer A_{em})_0  \to  (\integer A_{(e+1)m})_0$ given by:
\begin{equation*}
\omega_e^{(m)}(x,y) =(x,y+\ell) 
\end{equation*}
when $-\ell e\leq x-y < (1-\ell)e$ for some $\ell\in \{0,\ldots, m\}$.
\end{lemma}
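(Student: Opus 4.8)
The plan is to deduce this from the preceding lemma (\cite[Lemma~2.5]{Riedtmann3}) by a covering argument, reading the present statement as its ``$\tau^e$-periodic shadow''. As recorded just before the lemma, a configuration of the tube $\integer A_{em}/\langle \tau^e\rangle$ is the same datum as a $\tau^e$-periodic configuration of its universal cover $\integer A_{em}$; here one uses that configurations of $\integer A_N$ are automatically $\tau^N$-periodic, so that $\tau^e$-periodic configurations of $\integer A_{em}$ coincide with the $\tau^e$-periodic configurations of the square quotient $\integer A_{em}/\langle\tau^{em}\rangle$. The same applies to the codomain tube $\integer A_{(e+1)m}/\langle\tau^{e+1}\rangle$, which is where $\omega_e^{(m)}$ lands. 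Hence it suffices to prove the asserted equivalence between $\tau^e$-periodic configurations $\config$ of $\integer A_{em}$ and $\tau^{e+1}$-periodic configurations of $\integer A_{(e+1)m}$, after which it descends to the two tubes. As a consistency check, when $m=1$ this reduces exactly to Riedtmann's lemma: the band thresholds $-\ell e\leq x-y<(1-\ell)e$ with $\ell\in\{0,1\}$ collapse to the dichotomy $x\geq y$ versus $x<y$ defining $\omega_e$, and the inserted orbit $\tau^{(e+1)\integer}(0,1)$ is represented by $(e+1,1)$ in the tube.

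For general $m$, I would realize the passage $\integer A_{em}\rightsquigarrow\integer A_{(e+1)m}$ as the $m$-fold iterate of Riedtmann's insertion, composing the maps $\omega_{em},\omega_{em+1},\dots,\omega_{(e+1)m-1}$, each raising the number of levels by one. Iterating Riedtmann's lemma then furnishes a bijection between configurations of $\integer A_{em}$ and the configurations of $\integer A_{(e+1)m}$ containing the $m$ successively inserted bottom-row orbits. The first task is purely book-keeping: to verify that, on a single $\tau^e$-period, this composite of insertions is given on coordinates by the banded formula $\omega_e^{(m)}(x,y)=(x,y+\ell)$, the shift $\ell$ being exactly the number of inserted diagonals lying below $(x,y)$, which — because the $m$ insertions are forced to be spaced $e$ apart by the $\tau^e$-periodicity — is governed by the band containing $x-y$.

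The key point, and the main obstacle, is to show this $m$-fold composite is equivariant for the relevant translations, carrying $\tau^e$-periodic sets to $\tau^{e+1}$-periodic sets; equivalently, that $\omega_e^{(m)}\tau^e=\tau^{e+1}\omega_e^{(m)}$, generalizing the rule $\omega_n\tau^n=\tau^{n+1}\omega_n$. Granting this, the $m$ separately inserted bottom orbits $\tau^{(em+j+1)\integer}(0,1)$, $0\leq j\leq m-1$, collapse under the finer periodicity $\tau^{e+1}$ into the single orbit represented by $(e+1,1)$ (their level-$1$ positions within one $\tau^{(e+1)m}$-period are exactly the $\tau^{e+1}$-orbit of $(e+1,1)$), so that the image of a $\tau^e$-periodic $\config$ is precisely $\config^+=\omega_e^{(m)}\config\cup\{(e+1,1)\}$. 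Once the equivariance and this identification of coordinates are in place, both directions of the equivalence are inherited verbatim from the iterated Riedtmann lemma, so that $\config$ is a configuration of $\integer A_{em}/\langle\tau^e\rangle$ if and only if $\config^+$ is a configuration of $\integer A_{(e+1)m}/\langle\tau^{e+1}\rangle$. I expect the only genuine work to be the coordinate verification of the banded shift together with the equivariance, both elementary once a fundamental domain is fixed; no configuration-theoretic input beyond Riedtmann's lemma is required.
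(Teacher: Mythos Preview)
Your proposal is correct and follows essentially the same approach as the paper, which states only that the lemma is obtained by ``using covering theory, we specialise this lemma for configurations of $\integer A_{em}$ which are stable under $\tau^{e\integer}$'' from Riedtmann's Lemma~2.5; you have simply filled in the details of that specialisation (iterating $\omega_n$ a total of $m$ times, verifying the banded coordinate formula, and checking the equivariance $\omega_e^{(m)}\tau^e=\tau^{e+1}\omega_e^{(m)}$). You also correctly read the codomain as $\integer A_{(e+1)m}/\langle\tau^{e+1}\rangle$, matching the definition of $\omega_e^{(m)}$; the ``$(e-1)$'' appearing in the statement is a typographical slip.
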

This gives a way to constructing a configuration of $\integer A_{em}/\langle\tau^e\rangle$ from the a configuration of $\integer A_{(e-1)m}/\langle \tau^{e-1}\rangle$ by ``inserting a vertex" at the bottom rim, or at the top rim by reflecting the mesh along a horizontal line in the middle.  
This in particular shows that any member of a configuration must have $y$-coordinate in $\{1,2,\ldots,e,e(m-1)+1,e(m-1)+2,\ldots,e(m-1)+e\}$.

We are going to show an observation on the reverse process of this ``insertion construction" of configurations.  This will ultimately help us distinguish different sms's obtained by different two-term tilting complexes.

Suppose we have a configuration $\config$ of $\integer A_{em}/\langle \tau^e\rangle$ with $m>1$, then by \cite[3.1, 3.6]{CKL} $\config$ corresponds to an image of simple $B_{e,m}^G$-module under some stable equivalence $F:\stmod{B_{e,m}^G}\to\stmod{B_{e,m}^\star}$.  Each $(x,y)\in\config$ lying on the rim (i.e. $(x,y)$ with $y=1$ or $em$) then corresponds to a leaf of $G$ by Lemma \ref{lem-simpleLeaf}.  Fix any one of such $(x,y)$, let $G'$ be the Brauer tree obtained by removing a leaf corresponding to $(x,y)$ in $G$.  Apply Heller shifts $\Omega^n$ (for some $n$) to $\config$ so that $(x,y)\mapsto (1,1)$.  More explicitly, apply $\tau^{-x+1}$ (i.e. $n=-2x+2$) if $y=1$, or $\tau^{-x+1}\Omega^{-1}$ (i.e. $n=-2x+1$) if $y=em$.  Now we have $\Omega^n\config = \mathcal{D}^+$ for some configuration $\mathcal{D}$ which is a $B_{e-1,m}^{G'}$-simple-image.  Obviously, $\Omega^{-n}\mathcal{D}$ is still a configuration of $B_{e-1,m}^{G'}$.  We call this process as ``\emph{cutting off a leaf}".  Being the reverse process of Riedtmanns's insertion construction, the corresponding effect of cutting of a leaf is reflected as removing the ``diagonals" going into and coming out of $(x,y)$ on the stable AR-quiver.  Repeating this process, one eventually reaches a stage when the remaining Brauer tree is a star with $h$ edges.  If $m>1$, there are exactly two possible truncated configurations left, namely $\config_{h,m}^-:=\{(i,1)|i=1,\ldots,h\}$ and $\config_{h,m}^+:=\{(i,\ell)|i=1,\ldots,h\}=\Omega \config_{h,m}^-$.  This tells us which $\tau$-orbit the original configuration $\config$ lies in.  To summarise:

\begin{corollary}[Tree Pruning Lemma]\label{cor-cutTree}
\index{tree pruning}
Suppose $G$ is a Brauer tree with $e$ edges and multiplicity $m$, where the valency of the exceptional vertex is $\ell$.  Let $\config$ be the configuration of $B_{e,m}^G$ representing the simple $B_{e,m}^G$-module.  Then the effect on $\config$ after successively cutting off leaves of $G$ until reaching $B_{h,m}^\star$ is either $\config_{h,m}^-$ or $\config_{h,m}^+$, depending only on the $\tau$-orbit for which $\config$ lies in.
\end{corollary}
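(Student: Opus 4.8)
The plan is to convert the informal pruning procedure into a deterministic combinatorial recursion on configurations and then reduce the whole statement to a single computation on the Brauer star, where symmetry trivialises everything.

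First I would fix one pruning step and check that the recursion is defined and terminates. Starting from a configuration $\config$ that is a $B_{e,m}^G$-simple-image, a rim vertex $(x,y)\in\config$ (so $y=1$ or $y=em$) corresponds by Lemma \ref{lem-simpleLeaf} to a leaf $j$ of $G$ at a non-exceptional extremal vertex; writing $G'=G\setminus j$, the refined insertion lemma yields, after the Heller normalisation $\Omega^{n}$ taking $(x,y)$ to $(1,1)$, a unique $\mathcal{D}$ with $\Omega^{n}\config=\mathcal{D}^{+}$, and $\Omega^{-n}\mathcal{D}$ is again a simple-image, now of $B_{e-1,m}^{G'}$. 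Two bookkeeping facts suffice: each step removes one edge, and whenever $m>1$ and $G$ is not a star it has at least two extremal vertices and hence a non-exceptional one, so a rim vertex is always available and the exceptional vertex is never deleted. Thus the recursion runs until the tree is a star $B_{h,m}^{\star}=A_h^{hm}$.

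Next I would treat the base case. For the star the simple configuration is $\config_{h,m}^{-}=\{(i,1)\}$; since $A_h^{hm}$ is symmetric, $\nu\cong\id$ on the stable category and therefore $\tau\cong\Omega^{2}$. Applying the formula $\Omega M_{i,l}=M_{\overline{i-l+1},\,hm+1-l}$ to $\config_{h,m}^{-}$ gives $\Omega\config_{h,m}^{-}=\config_{h,m}^{+}$ and $\Omega\config_{h,m}^{+}=\{(\overline{i+1},1)\}=\config_{h,m}^{-}$ as sets. Hence $\config_{h,m}^{-}$ and $\config_{h,m}^{+}$ are the only two $\tau$-stable rim configurations, they are interchanged by $\Omega$, and each is fixed by $\Omega^{2}=\tau$.

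Finally I would splice the two pieces together. The simple-image tracked through the recursion is the image of the star's simples under the stable equivalence accumulated from the normalisations; because on the star every insertion/truncation collapses to a Heller shift, this image is $\Omega^{N}\config_{h,m}^{-}$ for the total shift $N$, so by the base case it is $\config_{h,m}^{-}$ for $N$ even and $\config_{h,m}^{+}$ for $N$ odd. Since $\Omega^{2}=\tau$ fixes both configurations, the residue $N\bmod 2$ is blind to $\tau$-shifts, which is exactly the assertion that the sign depends only on the $\tau$-orbit of $\config$. The main obstacle is the well-definedness hidden in the exponent $N$: different choices and orders of pruned leaves must not alter the parity. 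I expect to dispatch this by a confluence argument --- the net effect of a pruning step is to delete the two diagonals at the normalised rim vertex, and these local deletions at distinct vertices commute --- so that the terminal configuration, and with it the parity, is independent of the pruning history and is forced to be one of $\config_{h,m}^{\pm}$ rather than a more complicated sms of the star.
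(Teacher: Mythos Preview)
Your proposal is sound and follows essentially the same path the paper takes; in fact the paper offers no separate proof for this corollary at all --- it is stated with ``To summarise:'' after the informal discussion of cutting off a leaf as the inverse of Riedtmann's insertion, together with the observation that the only possible endpoints are $\config_{h,m}^{\pm}$. Your three-step outline (recursion/termination, base case on the star, and splicing via the parity of the accumulated Heller shift) is precisely a formalisation of that discussion.

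Two places where you sharpen what the paper leaves implicit are worth noting. First, the paper simply asserts that ``there are exactly two possible truncated configurations left''; your computation that $\Omega$ swaps $\config_{h,m}^{-}$ and $\config_{h,m}^{+}$ while $\Omega^{2}=\tau$ fixes each supplies the reason, since the terminal object is the image of the simples of $B_{h,m}^{\star}$ under a stable self-equivalence, and for the Brauer star such equivalences act on configurations as powers of $\Omega$. (Your phrase ``every insertion/truncation collapses to a Heller shift'' gestures at this; it would be cleaner to say directly that the accumulated equivalence on the star is $\Omega^{N}$ because the stable Picard group of $B_{h,m}^{\star}$ is generated by $\Omega$.) Second, the paper does not address the confluence issue at all, whereas you correctly identify it as the main obstacle and propose to resolve it via the diagonal-deletion picture --- this is the same geometric description the paper records (``removing the diagonals going into and coming out of $(x,y)$''), and since the deleted diagonals at distinct rim vertices are disjoint, the terminal sub-quiver and hence the terminal parity are independent of the pruning order.
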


By the virtue of this result, for Brauer tree with non-trivial exceptional vertex, we can classify its configurations (hence sms's) into two types.  A configuration $\config$ of $\integer A_{em}/\langle \tau^e\rangle$ (with $m>1$) is said to be of ``\emph{bottom-type}" (resp. ``\emph{top-type}") if the resulting configuration after tree pruning is $\config_{h,m}^-$ (resp. $\config_{\ell,m}^+$).  The two types distinguish a configuration from which $\tau$-orbit it lies in.  We denote $\smss_-(B_{e,m}^G)$ (resp. $\smss_+(B_{e,m}^G)$) for the set of sms's such that their corresponding configurations can be truncated to $\config_{h,m}^-$ (resp. $\config_{h,m}^+$) for some $h\in\{1,\ldots,e\}$.

\begin{corollary}\label{cor-disjoint-sms-type}
For the Brauer tree algebra $B_{e,m}^G$ with non-trivial multiplicity $m>1$, a simple-minded system of $B_{e,m}^G$ lies in either $\smss_-(B_{e,m}^G)$ or $\smss_+(B_{e,m}^G)$.  Moreover, there is a one-to-one correspondence between $\smss_-(A)$ and $\smss_+(A)$ given by Heller translate $\Omega$.
\end{corollary}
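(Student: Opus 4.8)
The plan is to derive both claims from the Tree Pruning Lemma (Corollary~\ref{cor-cutTree}) together with the fact that the Heller translate $\Omega$ is an autoequivalence of $\stmod{A}$, where throughout $A=B_{e,m}^G$. For the dichotomy, I would identify a given sms with its configuration $\config$ of $\integer A_{em}/\langle\tau^e\rangle$. By Corollary~\ref{cor-cutTree}, successively cutting off leaves of $G$ until the star $B_{h,m}^\star$ is reached yields a configuration depending only on the $\tau$-orbit of $\config$, hence a well-defined quantity $\mathrm{prune}(\config)$ independent of the order of cuts; moreover it equals one of $\config_{h,m}^-$ or $\config_{h,m}^+$. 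These two are always distinct, since $\config_{h,m}^-$ lies on the bottom rim ($y=1$) while $\config_{h,m}^+$ lies on the top rim ($y=hm$) and $hm>1$ because $m>1$. Thus every sms prunes to exactly one of them, so it belongs to precisely one of $\smss_-(A)$, $\smss_+(A)$, and these two sets are disjoint.

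For the bijection, note first that $\Omega$ is the inverse suspension of the triangulated category $\stmod{A}$, hence a triangle autoequivalence, so it restricts to a bijection on $\smss(A)$ with inverse induced by $\Omega^{-1}$; in particular it is injective on $\smss_-(A)$. The point I need is that pruning is $\Omega$-equivariant, that is $\mathrm{prune}(\Omega\config)=\Omega\,\mathrm{prune}(\config)$. I would establish this by viewing $\Omega$ as the combinatorial automorphism of the stable translation quiver $\integer A_{em}/\langle\tau^e\rangle$ sending $M_{i,l}$ to $M_{\overline{i-l+1},\,em+1-l}$: it commutes with $\tau$, hence induces an automorphism of the mesh category and carries configurations to configurations, and it interchanges the two rims, sending $M_{i,1}$ to $M_{i,em}$ and $M_{i,em}$ to $M_{\overline{i-em+1},1}$. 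A rim member of $\config$ corresponds to a leaf of $G$ by Lemma~\ref{lem-simpleLeaf}, and cutting it off amounts to deleting the two diagonals at that rim vertex, an operation intrinsic to the translation-quiver structure and therefore commuting with $\Omega$. Granting the equivariance, $\config\in\smss_-(A)$ gives $\mathrm{prune}(\Omega\config)=\Omega\config_{h,m}^-=\config_{h,m}^+$, so $\Omega\config\in\smss_+(A)$; the same argument applied to $\Omega^{-1}$ shows $\Omega^{-1}(\smss_+(A))\subseteq\smss_-(A)$. Combined with injectivity, this makes $\Omega$ restrict to a bijection $\smss_-(A)\to\smss_+(A)$.

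The main obstacle is precisely the equivariance $\mathrm{prune}(\Omega\config)=\Omega\,\mathrm{prune}(\config)$. The subtlety is that the cutting operation was defined by a concrete recipe — applying specific Heller shifts to move a chosen rim vertex to $(1,1)$ and then invoking the insertion map $\omega_e^{(m)}$ — rather than in manifestly $\Omega$-invariant terms. I would therefore verify carefully that this recipe genuinely realises the intrinsic ``delete the two diagonals at a rim vertex'' operation, and that $\Omega$ carries the leaf of $G$ cut at each step to a leaf of the Brauer tree underlying $\Omega\config$ of the same valency, so that the two pruned stars share the same number of edges $h$. Once this compatibility of $\Omega$ with Riedtmann's insertion construction is in place, both assertions of the corollary follow.
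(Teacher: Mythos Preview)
Your approach is essentially the same as the paper's: both derive the dichotomy from the Tree Pruning Lemma and establish the bijection by tracking how $\Omega$ interacts with the pruning process, using the explicit formula $\Omega(i,j)=(\overline{i-j+1},em+1-j)$ on the stable AR-quiver. The paper phrases the key step as ``apply tree pruning to $\config$ along with $\Omega\config$ by cutting the same leaf at each stage'', which is exactly your equivariance claim $\mathrm{prune}(\Omega\config)=\Omega\,\mathrm{prune}(\config)$; the paper also first reduces to $G=\star$ (since the type depends only on positions in the AR-quiver), which slightly streamlines the argument but is not essential.
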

\begin{proof}
It suffice to prove for $G=\star$ as sms's are stably invariant and bottom/top parity depends solely on the position of modules in the stable AR-quiver.  The first statement is clear from previous corollary.  For the second statement, note that $\Omega$ induces an automorphism on the stable AR-quiver ${}_s\Gamma = \integer A_{em}/\langle \tau^e\rangle$ by sending $(i,j)$ to $(\overline{i-j+1},em+1-j)$.  Apply tree pruning to a configuration $\config$ of ${}_s\Gamma$ along with $\Omega\config$ by cutting the same leaf at each stage, one can see $\Omega$ swaps the type parity of the configuration.
\end{proof}

For $m=1$ case, tree pruning will not give us a well-defined type as every vertex of the Brauer tree are (non-)exceptional.  This undermines the non-bijection nature of the map $\frakf$ in Theorem \ref{thm-main} in the case of $A_n^n$.  However, comparing tree pruning on $B_{e,m}^\star$ with $m>1$ and the corresponding procedure on $B_{e,1}^\star$ gives us the following relation between the configurations of their stable AR-quiver.

\begin{prop}\label{prop-surject-config}
Let $\config$ be a configuration of $\integer A_{em}/\langle\tau^e\rangle$, for each $(x,y)\in\config$, we have $(x,y)=(x,\tilde{y})$ or $(x,e(m-1)+\tilde{y})$ for some $\tilde{y}\in\{1,\ldots,e\}$, then $\widetilde{\config}:=\{(x,\tilde{y})|(x,y)\in\config\}$ is a configuration of $\integer A_e/\langle \tau^e\rangle$.  In particular, the assignment induce a surjection from $\smss(A_e^{em})$ onto $\smss(A_e^e)$.
\end{prop}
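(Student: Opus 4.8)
The plan is to exploit the tree-pruning machinery of Corollary~\ref{cor-cutTree}, comparing the pruning process for multiplicity $m$ with that for multiplicity $1$, exactly as suggested by the remark preceding the statement. Write $\pi$ for the assignment $(x,y)\mapsto(x,\tilde y)$ on boundary vertices, where $\tilde y=y$ if $y\le e$ and $\tilde y=y-e(m-1)$ if $y>e(m-1)$. By the refinement of Riedtmann's insertion lemma every member of a configuration lies in one of the two bands $\{1,\dots,e\}$ or $\{e(m-1)+1,\dots,em\}$, so $\pi$ is defined on all of $\config$, and since it does not alter the $x$-coordinate it commutes with $\tau$ and hence descends to the tube $\integer A_e/\langle\tau^e\rangle$. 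First I would record that $\pi$ is injective on any configuration: the stable-Hom formula for self-injective Nakayama algebras gives $\sthom(M_{i,a},M_{i,e(m-1)+a})\neq 0$ (the obvious inclusion is stably nonzero), so a configuration cannot contain both a bottom vertex $(i,a)$ and the top vertex $(i,e(m-1)+a)$ having the same image; thus $|\widetilde\config|=|\config|=e$.

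Next I would run the two prunings in parallel. Cutting a leaf of $G$ removes a rim vertex $(x,y)$ from $\config$ (Lemma~\ref{lem-simpleLeaf}) and, by the insertion lemma, amounts to deleting the meshes through $(x,y)$ and applying $\omega^{-1}$ to drop the rank from $e$ to $e-1$. Because $\pi$ sends the bottom rim $y=1$ to the bottom rim of the small tube and the top rim $y=em$ to its top rim $y=e$, the image $\pi(x,y)$ is again a rim vertex, i.e.\ a leaf of the corresponding Brauer tree of trivial multiplicity; so the same sequence of leaf-cuts applies to $\widetilde\config$. The key computation is that $\pi$ intertwines the two insertion operations $\omega_e^{(m)}$ and $\omega_e^{(1)}$, so that cutting-then-collapsing equals collapsing-then-cutting at every step. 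Granting this, the process reaches the base case $\config_{h,m}^{\pm}$, and here $\pi(\config_{h,m}^-)=\config_{h,1}^-$ and $\pi(\config_{h,m}^+)=\config_{h,1}^+=\Omega\,\config_{h,1}^-$ are the two star configurations of $\integer A_h/\langle\tau^h\rangle$, which are genuine configurations. Reversing the pruning and invoking the insertion lemma (which preserves configurations) then shows, by induction on the number of cuts, that $\widetilde\config$ is a configuration of $\integer A_e/\langle\tau^e\rangle$, proving the first assertion.

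For surjectivity, given a configuration $\mathcal{D}$ of $\integer A_e/\langle\tau^e\rangle$ I would prune it to a star configuration $\config_{h,1}^{\pm}$ and then re-insert the same leaves starting from the matching star configuration $\config_{h,m}^{\pm}$ of the big tube; since insertion commutes with $\pi$ and preserves configurations, this produces a configuration $\config$ of $\integer A_{em}/\langle\tau^e\rangle$ with $\pi\config=\mathcal{D}$. Translating through the identification $\smss(A_e^{em})\cong\mathsf{Conf}(\integer A_{em}/\langle\tau^e\rangle)$ (and likewise for $e$) yields the desired surjection $\smss(A_e^{em})\onto\smss(A_e^e)$.

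The main obstacle I anticipate is the synchronisation of the two pruning processes --- concretely, proving that $\pi$ intertwines $\omega_e^{(m)}$ with $\omega_e^{(1)}$ and that a leaf-cut of $G$ always corresponds under $\pi$ to a legitimate leaf-cut on the small side, including the bookkeeping for members lying in the interior of a band rather than on a rim. As a hedge I would keep the direct alternative in reserve: orthogonality of $\widetilde\config$ can be checked head-on from the stable-Hom formula, using the intertwining $\pi\Omega=\Omega\pi$ to reduce top-band pairs to bottom-band pairs, where the Hom-range of the big tube contains that of the small tube so that vanishing transports downward; covering would then have to be recovered from the fact that a pairwise-orthogonal family of $e$ stable bricks in $\integer A_e/\langle\tau^e\rangle$ is forced to be a configuration, which is itself the delicate point of that route.
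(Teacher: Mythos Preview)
Your proposal is correct and follows precisely the approach the paper indicates: the paper does not give a written proof of this proposition, only the sentence preceding it (``comparing tree pruning on $B_{e,m}^\star$ with $m>1$ and the corresponding procedure on $B_{e,1}^\star$ gives us the following relation\ldots''), and you have faithfully expanded that hint into an argument. Your identification of the key technical point---that $\pi$ intertwines $\Omega$ and the insertion maps $\omega_e^{(m)}$, $\omega_e^{(1)}$ so that the two prunings run in lockstep---is exactly what is needed, and your backup via direct stable-Hom computation is a sensible hedge.
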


\section{Mutation theories for self-injective Nakyama algebras}\label{sec:2tilt-mutations}
\subsection{Definitions of mutations on silting complexes and sms's}
We work with the ``Nakayama-stable" version of silting mutations introduced in \cite[Sec 5]{CKL}, which mutate tilting complexes to tilting complexes.

Let $T=T_1\oplus\cdots\oplus T_r$ be a complex in $K^b(\mathrm{proj}A)$.
If $X$ is a Nakayama-stable summand of $T$ such that for any Nakayama-stable
summand $Y$ of $X$, we have $Y=X$, then we call $X$ a \emph{minimal Nakayama-stable summand}.

\begin{definition}[\cite{AI,CKL}]\label{defn-silting-mutation}
For a (basic) tilting complex $T=T_1\oplus\cdots \oplus T_r$ (so each of $X_i$'s is indecomposable and they are pairwise non-isomorphic) and $X$ a Nakayama-stable summand of $T$, we write $T=X\oplus M$.
A \emph{left tilting mutation} of $T$
with respect to $X$, denoted by $\mu_X^-(T) = U_1\oplus\cdots \oplus U_r$,
is the complex with indecomposable summands $U_i$ are given as follows:
\begin{enumerate}
\item $U_i = T_i$ if $T_i$ is not a direct summand of $X$;
\item Otherwise, $U_i$ is the unique object appearing in the distinguished triangle:
\[
M' \to T_i \to U_i \to M'[1]
\]
where the first map is a minimal left $\add M$-approximation of $X_i$.
\end{enumerate}
The \emph{right tilting mutation} $\mu_X ^+(T)$ is defined similarly.
A tilting mutation with respect to $X$ is called \emph{irreducible} if $X$ is minimal Nakayama-stable.
\end{definition}

Analogous to mutation of tilting complexes, the mutation of sms's is given below.
\begin{definition}[Def 4.3 in \cite{D3})] \label{mutation-sms}  ,
Let $\mathcal{S} = \{X_1,\ldots, X_r\}$ be an sms of $\stmod{A}$ with $A$ finite dimensional self-injective,
and suppose $\mathcal{X}\subseteq \mathcal{S}$ is Nakayama-stable.
The \emph{left sms mutation} of $\mathcal{S}$ with respect to $\mathcal{X}$ is the set
  $\mu_{\mathcal{X}}^-(\mathcal{S}) = \{Y_1,\ldots,Y_r\}$ such that
\begin{enumerate}
\item $Y_j=\Omega^{-1}X_j$, if $X_j\in \mathcal{X}$
\item Otherwise, $Y_j$ is defined by the following distinguished triangle
$$\Omega X_j\rightarrow X \rightarrow Y_j \to X_j $$ 
where the first map is a minimal left $\mathcal{F(X)}$-approximation
of $\Omega X_j$.
\end{enumerate}
The \emph{right sms mutation} $\mu_{\mathcal{X}}^+(\mathcal{S})$ of $\mathcal{S}$ is defined similarly.
If we mutate a sms with respect to a minimal Nakayama-stable subset, then we call the mutation \emph{irreducible}.
\end{definition}
\begin{remark}
In our setting, the set of configurations of $\integer A_n$ inherits a mutation theory from that of $\smss(A_n^n)$, due to the bijection shown in \cite{CKL}.  This mutation is, however, different from the one used in \cite{S}.
\end{remark}
\begin{remark}\label{rmk-covering-theory}
Note that the Nakayama functor of $A_n^\ell$ sends $(x,y)$ to $(\overline{x+e},y)$ with $e=\gcd(n,\ell)$, and configurations of $\integer A_\ell/\langle \tau^n\rangle$ are $\tau^{e\integer}$-stable.  Therefore, the effect of performing an irreducible mutation on a sms (configuration) of $A_n^\ell$ can be observed on a corresponding irreducible mutation on $A_e^\ell$.  Hence, ``covering theory is compatible with mutation".
\end{remark}

\subsection{Combinatorial description of two-term tilting complexes.}
Given a tilting complex $T$, let $E_T$ denote the derived equivalent algebra $\End_{\catT}(T)$, and $F_T:D^b(\rmod{A})\to D^b(\rmod{E_T})$ be the associated derived equivalence.  By a result of Rickard \cite{Ric2}, any algebra derived equivalent to a Brauer tree algebra is also a Brauer tree algebra, hence, we sometimes call a tilting complex over $B_{e,m}^\star$ to be ``star-to-tree tilting complex" if $A$ is the Brauer star algebra $B_{e,m}^\star$.  However, there are infinitely many tilting complexes (even up to shifts and homotopy equivalences), and we should restrict to a much more refined subclass when studying the homological theories around these algebras.  Our choice in the current article is the set of two-term tilting complexes.  The main reason comes from the fact that every derived equivalence between representation-finite self-injective algebras given by a tilting complex is a composition of derived equivalences given by two-term tilting complexes shown by Abe and Hoshino \cite{AH}.  In \cite{AIR}, it is shown that the set of two-term tilting complexes of symmetric algebra is in bijection with the set of functorially finite torsion classes of its module category, emphasising the importance of two-term tilting complexes in the study of homological behaviour of a symmetric algebra.

We will use the combinatorial description of two-term tilting complexes from a mixture of results from \cite{SZ,RS, Adachi}.  In \cite{Adachi}, combinatorial descriptions are given to the so-called support $\tau$-tilting modules; since this is not our main interest, we will not go through the definitions of $\tau$-tilting theory, instead we just recall the result from \cite{AIR}, which says that the set of two-term silting complexes over a finite dimensional algebra $A$ is in order-preserving correspondence to the set of support $\tau$-tilting $A$-module, so that we can translate the results from \cite{Adachi} for our needs.  As we have mentioned, a silting complex is tilting if and only if it is Nakayama-stable; this translates into the following result in $\tau$-tilting theory:
For a finite dimensional algebra $A$, there is a mutation preserving correspondence between $\twotilt(A)$ and the set of Nakayama-stable support $\tau$-tilting $A$-modules.  This is also implicit from work of Mizuno \cite{Mizuno}.

For a self-injective Nakayama algebra $A_n^\ell$, the result of \cite{Adachi} gives us a combinatorial description of two-term tilting complexes over $A_n^\ell$ via triangulations on a punctured regular convex $n$-gon.

\begin{definition}[c.f. \cite{Adachi}]\label{defn-triangulation}
Let $i,j\in\{1,\ldots,n\}$, and $\mathcal{G}_n$ be a punctured regular convex $n$-gon (\emph{punctured $n$-disc}) with vertices labelled by $\{1,\ldots,n\}$ with counter-clockwise ordering.
\begin{itemize}
\item[(1)] An \emph{inner arc} $\langle j,i \rangle$ in $\mathcal{G}_n$ is a path from the vertex $i$ to the vertex $j$ 
homotopic to the boundary path $i,\overline{i+1},\cdots,\overline{i+l}=j$ such that $1<l \le n$. 
Then we call $i$ (respectively, $j$) a \emph{initial} (respectively, \emph{terminal}) point and
$\ell(\langle i,j\rangle):=l$ the \emph{length} of the inner arc.
\item[(2)] A \emph{projective arc} $\langle \bullet,j\rangle$ in $\mathcal{G}_{n}$ is a path from the puncture to the vertex $j$.
Then we call $j$ a {\it terminal} point.
\item[(3)] An \emph{admissible arc} is an inner arc or a projective arc.  We denote by $\mathcal{A}(n)$ the set of admissible arcs in $\mathcal{G}_n$. 
\item[(3)] Two admissible arcs in $\mathcal{G}_{n}$ are called \emph{compatible} if they do not intersect in $\mathcal{G}_{n}$ (except at their initial and terminal points).
\item[(4)] A \emph{triangulation} of $\mathcal{G}_{n}$ is a maximal set of distinct pairwise compatible admissible arcs.
We denote by $\mathcal{T}(n)$ the set of triangulations of $\mathcal{G}_{n}$, and by $\mathcal{T}(n;l)$ the subset of $\mathcal{T}(n)$ consisting of triangulations such that the length of every inner arc has length at most $l\leq n$.
\end{itemize}
\end{definition}
\begin{remark}
The original notation used by Adachi is $\langle i,j\rangle$ instead of $\langle j,i\rangle$.  This is due to the different vertex labelling and direction of composition of arrows on the quiver we use, so that the new notation still matches up with the terms appearing in two-term tilting complexes, as we will see in the following theorem.
\end{remark}

We also note that $\mathcal{T}(n)$ admits a mutation theory, namely, for a given triangulation $X\in\mathcal{T}(n)$ and an admissible arc $a\in X$, the (irreducible) mutation of $X$ with respect to $a$ is a unique triangulation $\mu_a(X)\in\mathcal{T}(n)$\index{mutation!of triangulation}.  This gives a partial order\index{partial order!on triangulations} structure on $\mathcal{T}(n)$ with the triangulation $\{\langle \bullet,i\rangle|i=1,\ldots,n\}$ being the unique maximal one.
Also recall from previous section that the set of tilting complexes (up to shifts and homotopy equivalences) admits a partial ordering given by $T\geq U$ if and only if $\Hom(T,U[>0])=0$, which is compatible with its mutation theory (c.f. \cite[5.11]{CKL}).
We restate the theorem of Adachi using two-term silting complexes instead of support $\tau$-tilting modules.
  
\begin{theorem}[\cite{Adachi}]\label{thm-adachi}
Let $n,\ell\in\mathbb{N}$,
\begin{enumerate}
\item The map $\langle j,i\rangle \mapsto (P_{\overline{j-1}}\to P_i)$ and $\langle \bullet,i\rangle\mapsto P_i$ the stalk complex concentrated in degree 0 induces a $\phi_-$ from $\mathcal{T}(n;\min\{\ell,n\})$ to the subset $\twosilt_-(A_n^\ell)$ of two-term silting complexes of $A_n^\ell$, which is order-preserving when $n\leq\ell$, i.e. $\phi_-(\mu_a(X))=\mu_{\phi_-(a)}^-(\phi_-(X))$ if $X\geq \mu_a(X)$.

Dually, the map $\langle j,i\rangle \mapsto (P_j\to P_{\overline{i+1}})$ and $\langle \bullet,i\rangle\mapsto P_i$ the stalk complex concentrated in degree $-1$ induces a map $\phi_+$ from $\mathcal{T}(n;\min\{\ell,n\})$ to the subset $\twosilt_+(A_n^\ell)$ of two-term silting complexes of $A_n^\ell$, which is anti-order-preserving when $n\leq\ell$, i.e. $\phi_+(\mu_a(X))=\mu_{\phi_+(a)}^+(\phi_+(X))$ if $X\geq \mu_a(X)$.

In particular, there is a bijection between $\twosilt_-(A_n^\ell)$ and $\twosilt_+(A_n^\ell)$.

\item $\twosilt(A_n^\ell) = \twosilt_-(A_n^\ell)\sqcup\twosilt_+(A_n^\ell)$.
\item For a two-term silting complex $T$ of $A_n^\ell$, $T\in\twosilt_-(A_n^\ell)$ (resp. $T\in\twosilt_+(A_n^\ell)$) if and only if all its indecomposable stalk complexes are concentrated in degree 0 (resp. $-1$).
\end{enumerate}
\end{theorem}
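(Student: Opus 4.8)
The plan is to deduce this from Adachi's classification \cite{Adachi} of support $\tau$-tilting modules over $A_n^\ell$ by transporting it along the Adachi--Iyama--Reiten bijection \cite{AIR}. Recall that \cite{AIR} gives an order- and mutation-preserving bijection between $\twosilt(A)$ and the set of support $\tau$-tilting $A$-modules (pairs), under which a two-term complex $T=\bigoplus_i(P_i^{-1}\to P_i^0)$ corresponds to the pair $(H^0(T),Q)$: here the summands of $T$ that are stalks in degree $-1$ index the removed support $Q$, the stalk summands in degree $0$ are the projective direct summands of the $\tau$-tilting module $H^0(T)$, and the genuine two-term summands give its non-projective $\tau$-rigid summands. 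So the theorem is a faithful transcription of \cite{Adachi}, and the real content is to make the complexes explicit, to isolate the degree dichotomy $\pm$, and to match up the orders and mutations.

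Concretely, for each inner arc $\langle j,i\rangle$ I would read off the corresponding indecomposable $\tau$-rigid (uniserial) module from Adachi's dictionary and compute its minimal projective presentation; for a self-injective Nakayama algebra this presentation is genuinely two-term with the stated endpoints, yielding $(P_{\overline{j-1}}\to P_i)$ for $\phi_-$ and the dual $(P_j\to P_{\overline{i+1}})$ for $\phi_+$. The length bound $1<l\le\min\{\ell,n\}$ is precisely the condition that this module exists and admits a two-term (rather than longer) presentation, which is exactly why one truncates to $\mathcal{T}(n;\min\{\ell,n\})$. Each projective arc $\langle\bullet,i\rangle$ is sent to the stalk $P_i$; in $\phi_-$ it sits in degree $0$ (a projective summand of the $\tau$-tilting module, no support removed), and in $\phi_+$ the dual assignment places it in degree $-1$ (a support-removing summand). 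The two charts are intertwined by the self-duality of $A_n^\ell$ together with the shift: this is what converts the order-preservation of $\phi_-$ (valid for $n\le\ell$) into anti-order-preservation of $\phi_+$, and it sends the maximal triangulation $\{\langle\bullet,i\rangle\mid i=1,\dots,n\}$ to the maximal silting $A$ under $\phi_-$ and to the minimal silting $A[1]$ under $\phi_+$.

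Well-definedness (compatible arcs give a silting complex, i.e.\ $\Hom_{\catT}(T,T[>0])=0$) and bijectivity onto each image are imported from \cite{Adachi} via \cite{AIR}; the mutation and order compatibilities then follow by tracking a single arc-flip through the dictionary and invoking that the \cite{AIR} bijection is itself order- and mutation-preserving, with the sign on $\mu^\pm$ dictated by the chart. For parts (2) and (3) the clean geometric input is that every triangulation of the punctured disc must resolve the puncture, so it contains at least one projective arc (even a self-folded triangle carries its interior radius); hence every two-term silting complex has a stalk summand, and since a stalk cannot lie in degrees $0$ and $-1$ at once, $\twosilt_-(A_n^\ell)$ and $\twosilt_+(A_n^\ell)$ are disjoint, with (3) identifying each image by its stalk degree. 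That the union is everything --- that no silting complex mixes a degree-$0$ and a degree-$(-1)$ stalk --- is where self-injectivity is used: a degree-$0$ stalk $P_b$ together with a degree-$(-1)$ stalk $P_a[1]$ would contribute a nonzero element of $\Hom_{\catT}(T,T[1])$ once $\Hom_A(P_a,P_b)\neq 0$, contradicting the silting condition.

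The main obstacle I anticipate is the bookkeeping of the two dual charts: pinning down exactly which projective endpoints and which cohomological degree each arc produces, and above all showing that the degree assignment is \emph{globally} consistent (the genuine $\pm$ dichotomy) rather than varying arc-by-arc. In particular, establishing the anti-order-preservation of $\phi_+$ cleanly --- exhibiting the precise self-duality and shift that relate $\phi_+$ to $\phi_-$ and reverse the silting order --- is the delicate point, since $\phi_+$ is \emph{not} merely a shift of $\phi_-$ (the shift functor does not preserve the two-term window $\{-1,0\}$). Everything beyond this is a transcription of \cite{Adachi} through the order- and mutation-preserving bijection of \cite{AIR}.
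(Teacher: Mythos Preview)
Your proposal is correct and matches the paper's own treatment: the paper does not give an independent proof of this theorem but simply restates Adachi's classification of support $\tau$-tilting modules in the language of two-term silting complexes via the Adachi--Iyama--Reiten bijection \cite{AIR}, exactly as you outline. Your more detailed account of how the dictionary works (minimal projective presentations for inner arcs, stalk degrees for projective arcs, the $\pm$ dichotomy from the presence of a projective arc) is a faithful unpacking of that restatement.
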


We now refine this result on tilting complexes.  Note that the Nakayama permutation of $A_n^\ell$ is a product of $e$ disjoint $n/e$-cycles, so the effect of applying Nakayama functor on a silting complex now manifests as turning the punctured $n$-disc anti-clockwise by $2\pi/(n/e)$.  Recall that two-term tilting complexes of self-injective algebras are just Nakayama-stable silting complexes, so they correspond to triangulations of punctured $n$-disc with a $2\pi/(n/e)$-rotation symmetry.  Such type of triangulations will then be in correspondence with triangulations on a punctured $e$-disc by identifying the punctured point of $n$-disc with punctured point of $e$-disc, and vertex $i$ with $ke+i$ for all $k=1,\ldots,n/e-1$ and $i=1,\ldots,e$.  

\begin{example}
Let $n=12, \ell=16$, Figure \ref{fig-triang} shows a triangulation of a punctured $12$-disc on the left, which is $2\pi/3$-rotational symmetric.  This triangulation can be identified with a triangulation of $4$-disc shown on the right.
\end{example}
\begin{figure}[h!]
\centering
\includegraphics[width=3.8in]{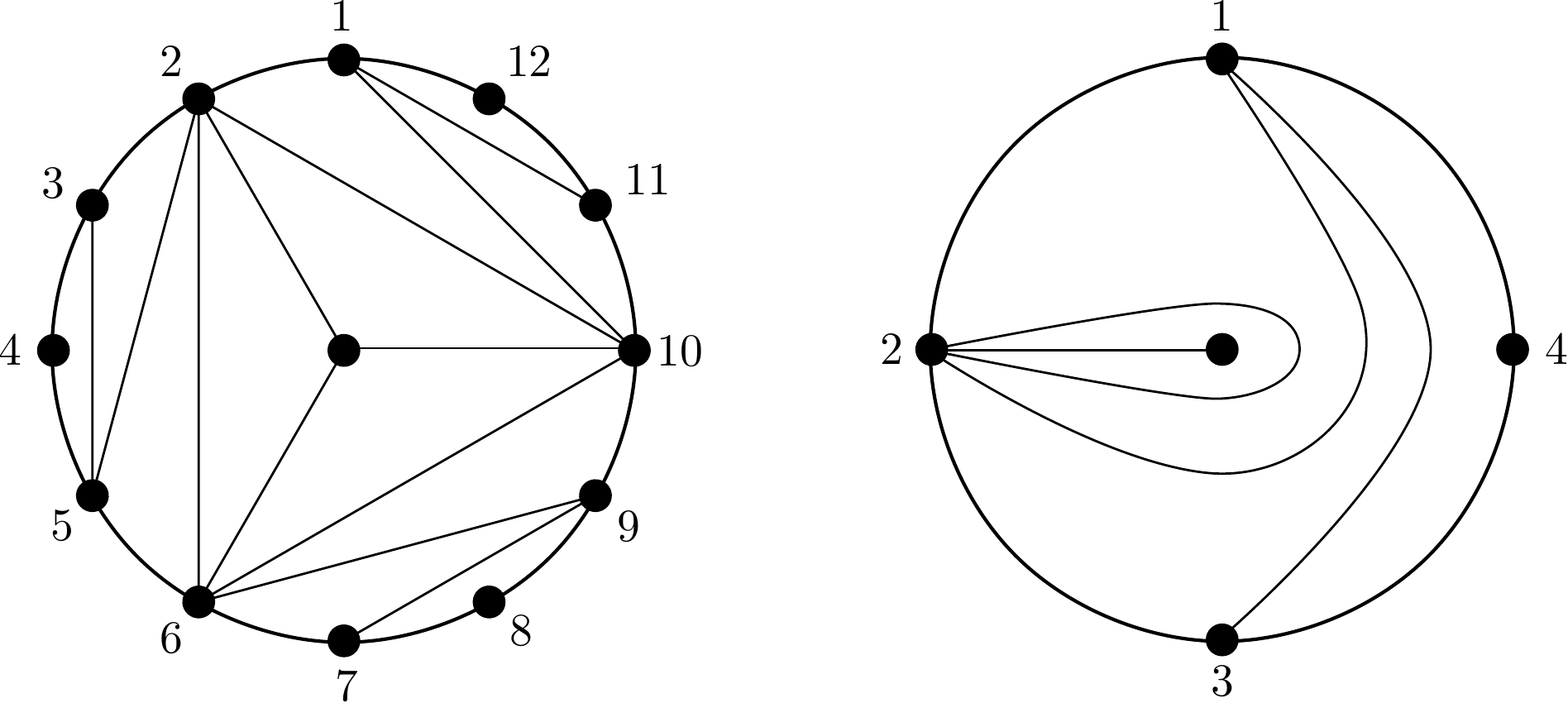}
\caption{Identifying rotational symmetric triangulation of $12$-disc and triangulation of $4$-disc}\label{fig-triang}
\end{figure}

Summarising, we have the following result:

\begin{theorem}\label{thm-SINakayama-2tilt}
For any $n,\ell\in\integer$ and $e=\gcd(n,\ell)$, there are bijections:
\begin{equation}\label{eqn-triang-tilt-biject}
\begin{array}{rcl}
\twotilt_-(A_n^\ell) \;\;\leftrightarrow  & \mathcal{T}(e) & \leftrightarrow\;\; \twotilt_-(A_e^\ell)\\
\twotilt_+(A_n^\ell) \;\;\leftrightarrow  & \mathcal{T}(e) & \leftrightarrow\;\; \twotilt_+(A_e^\ell)
\end{array}
\end{equation}
where those in top row are order-preserving and those in the bottom row are anti-order-preserving respectively.  In particular, we have mutation preserving bijections $\twotilt_\pm(A_n^\ell)\leftrightarrow \twotilt_\pm(A_e^\ell)$.
\end{theorem}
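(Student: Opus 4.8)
The plan is to deduce both rows of bijections by combining Adachi's theorem (Theorem \ref{thm-adachi}) with a folding/unfolding argument that reinterprets Nakayama-stability on the punctured disc. The right-hand bijections $\mathcal{T}(e)\leftrightarrow\twotilt_\pm(A_e^\ell)$ come essentially for free. Since $e=\gcd(n,\ell)$ divides $\ell$, the algebra $A_e^\ell$ is the symmetric Brauer star algebra $B_{e,\ell/e}^\star$; by the Nakayama-shift formula recorded in Section \ref{sec:prelim} its Nakayama functor shifts by $\gcd(e,\ell)=e\equiv 0$, hence is isomorphic to the identity, so every two-term silting complex over $A_e^\ell$ is automatically Nakayama-stable, i.e. tilting, and $\twosilt_\pm(A_e^\ell)=\twotilt_\pm(A_e^\ell)$. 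As $e\leq\ell$ we are in the order-preserving (resp. anti-order-preserving) regime of Theorem \ref{thm-adachi}, which identifies these sets with $\mathcal{T}(e;\min\{\ell,e\})=\mathcal{T}(e)$, the length bound $\min\{\ell,e\}=e$ being vacuous in an $e$-disc.

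For the left-hand bijections I would first recall that a two-term silting complex over a self-injective algebra is tilting exactly when it is Nakayama-stable, so $\twotilt_\pm(A_n^\ell)$ is the Nakayama-stable part of $\twosilt_\pm(A_n^\ell)\leftrightarrow\mathcal{T}(n;\min\{\ell,n\})$. As noted before the statement, the Nakayama permutation of $A_n^\ell$ consists of $e$ cycles of length $n/e$, and under Adachi's correspondence applying $\nu$ rotates $\mathcal{G}_n$ by $2\pi/(n/e)$, i.e. effects the vertex shift $i\mapsto\overline{i+e}$. Hence $\twotilt_\pm(A_n^\ell)$ corresponds to the triangulations of $\mathcal{G}_n$ invariant under this shift. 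I would then set up the $(n/e)$-fold covering $\mathcal{G}_n\to\mathcal{G}_e$ branched at the puncture (identifying the two punctures, and vertex $i$ with $\overline{i}$ modulo $e$) and show it induces a bijection between shift-invariant triangulations of $\mathcal{G}_n$ and triangulations of $\mathcal{G}_e$: a projective arc lifts to the $\nu$-orbit of projective arcs, an inner arc of length $l$ lifts to the orbit of inner arcs of the same length $l$ (the covering being an unramified local homeomorphism near the boundary), and compatibility is preserved both ways because lifts of non-crossing arcs cut off disjoint or nested sets of vertices.

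The main obstacle is the length bookkeeping: one must check that shift-invariance forces every inner arc to have length at most $e$, so that the image lands in $\mathcal{T}(e)$ and not in some $\mathcal{T}(e;\text{larger})$. The key is a crossing argument. An inner arc $\langle\overline{i+l},i\rangle$ separates the vertices $\overline{i+1},\dots,\overline{i+l-1}$ from the puncture; if $l>e$ then its shift $\langle\overline{i+l+e},\overline{i+e}\rangle$ has initial point $\overline{i+e}$ lying strictly inside this cut-off region while its terminal point lies on the puncture side, forcing the two arcs to cross, contradicting that both lie in the same shift-invariant triangulation. Conversely, an arc of length $\leq e$ in $\mathcal{G}_e$ lifts to arcs of length $\leq e\leq\min\{\ell,n\}$, which are admissible for $\mathcal{T}(n;\min\{\ell,n\})$, so the folding map surjects onto all of $\mathcal{T}(e)$. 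The boundary case $l=e$ is harmless, since such an arc and its shift meet only at the shared vertex $\overline{i+e}$.

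Finally, for the order statements I would transport the partial orders through the two bijections. A shift-invariant irreducible mutation of a triangulation of $\mathcal{G}_n$ (mutating an entire $\nu$-orbit of arcs simultaneously) corresponds under the covering to a single irreducible mutation of the associated triangulation of $\mathcal{G}_e$, and under Adachi's map to a Nakayama-stable tilting mutation; since the partial order on two-term (s)ilting complexes and on triangulations is the transitive closure of these oriented mutations, the folding bijection is mutation-preserving and hence order-preserving. Combining this with the order-preserving (resp. anti-order-preserving) right-hand bijection for $A_e^\ell$ gives the claim for the top (resp. bottom) row, and composing the two yields the asserted mutation-preserving bijections $\twotilt_\pm(A_n^\ell)\leftrightarrow\twotilt_\pm(A_e^\ell)$.
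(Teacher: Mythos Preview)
Your proposal is correct and follows essentially the same route as the paper: the paper's argument is the discussion immediately preceding the theorem together with the remark after it, which identifies Nakayama-stability with $2\pi/(n/e)$-rotational symmetry of the triangulation and then folds $\mathcal{G}_n$ onto $\mathcal{G}_e$. Your write-up is in fact more careful than the paper's---you spell out why $A_e^\ell$ is symmetric so that $\twosilt_\pm=\twotilt_\pm$ on that side, you give an explicit crossing argument forcing arc lengths $\le e$ (where the paper's remark just asserts this), and you treat the boundary case $l=e$ explicitly.
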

\begin{remark}
The reader should be careful when consider the case $n>\ell$.  Note that rotational symmetry restricts the lengths of admissible arcs to be less than $e\leq \ell$, hence the assignment from a rotational symmetric triangulation in $\mathcal{T}(n;\ell)$ to a triangulation in $\mathcal{T}(e)$ as before is still well-defined and (anti-)order-preserving.
\end{remark}

This result is a refinement of the covering theory for derived categories of representation-finite self-injective algebras used in \cite{Asa}.  Reader familiar with covering theory in \cite{Asa} would naturally expect such a result as a consequence of \cite[5.11]{CKL}.  We note that, it is not clear from the proofs of \cite{Asa} whether all (two-term) tilting complexes of $A_n^\ell$ can be obtained by using covering theory of the (two-term) tilting complexes of $A_e^\ell$; this result shows an affirmative answer.

\subsection{Constructing Brauer trees from a two-term tilting complex}
Given a two-term tilting complex $T$ of $B_{e,m}^\star = A_e^{em}$, there is a simple construction to determine the Brauer tree $G$ associated to $E_T$ using result of Schaps and Zakay-Illouz \cite{SZ,RS}, which we will go through in the next section.  Here, we use Schaps-Zakay-Illouz construction to obtain $G$ directly from the triangulation of a punctured $e$-disc.

Consider a triangulation $X\in\mathcal{T}(e)$, for each vertex $i$ on the punctured disc, we distinguish some sets of inner arcs in $X$ as follows
\begin{equation}\label{eqn-LRset}
\begin{array}{l}
A_i^-(X) = \{\langle j,i\rangle| j\in \{1,\ldots,e\}\},\\
A_i^+(X) = \{\langle i,k\rangle| k\in \{1,\ldots,e\}\}.
\end{array}
\end{equation}
Note that $X$ is the disjoint union of projective arcs and arcs in $A_i^-(X)$ (resp. $A_i^+(X)$) over all $i$.
One can now construct the Brauer tree $G$ associated to the endomorphism ring of $\phi_-(X)$ or $\phi_+(X)$ using result in \cite{SZ} as follows.

\begin{prop}\label{prop-triangualtion-to-BTree}
Let $X$ be a triangulation of a punctured disc, construct a pair of Brauer trees $G_X^-$ and $G_X^+$ as follows.
\begin{enumerate}[(1)]
\item Let $\{v_0,v_1,\ldots,v_e\}$ be vertices of $G_\pm$.  For each projective arc $\langle\bullet,i\rangle\in X$, connect $v_0$ and $v_i$ by an edge.

\item For each $i\in \{1,\ldots,e\}$ and each arc in $\langle j,i\rangle\in A_i^-(X)$ (resp. $\langle i,k\rangle \in A_i^+(X)$), connect the vertices $v_i$ and $v_{\overline{j-1}}$ of $G_X^-$ (resp. $v_i$ and $v_{\overline{k+1}}$ of $G_X^+$) by an edge.
\end{enumerate}
Then $G_X^-$ (resp. $G_X^+$) with exceptional vertex $v_0$ and multiplicity $m$ is the precisely the Brauer tree $G$ such that $B_{e,m}^G\isom \End_{K^b(\proj{B_{e,m}^\star})}(\phi_-(X))$ (resp. $\End_{K^b(\proj{B_{e,m}^\star})}(\phi_+(X))$).
\end{prop}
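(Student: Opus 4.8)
The plan is to compose two known ingredients: Adachi's bijection $\phi_\pm$ of Theorem~\ref{thm-adachi}, which turns the combinatorics of the triangulation $X$ into the indecomposable summands of the two-term tilting complex $T=\phi_\pm(X)$, and the Schaps--Zakay-Illouz recipe \cite{SZ,RS}, which reads the Brauer tree of $\End(T)$ off from the summands of $T$. Since under $\phi_-$ (resp.\ $\phi_+$) a projective arc $\langle\bullet,i\rangle$ gives the stalk $P_i$ and an inner arc $\langle j,i\rangle$ gives the two-term complex $P_{\overline{j-1}}\to P_i$ (resp.\ $P_j\to P_{\overline{i+1}}$), the arcs of $X$ are simultaneously in bijection with the summands of $T$ and, after the construction, with the edges of $G_X^\pm$. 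Thus the whole task reduces to transcribing the Schaps--Zakay-Illouz construction into the language of arcs and checking that the resulting edge--vertex incidences are exactly those prescribed by rules (1) and (2).

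To carry out the transcription I would first pin down the vertex correspondence. The exceptional vertex must be $v_0$ with multiplicity $m$: the exceptional multiplicity $m$ and the number of edges $e$ are derived invariants among Brauer tree algebras by Rickard \cite{Ric2}, and $v_0$ is the image of the central exceptional vertex of the Brauer star $B_{e,m}^\star$, which corresponds to the puncture of $\mathcal{G}_e$; the remaining $v_1,\ldots,v_e$ then index the boundary vertices. For a projective arc the summand $P_i$ is a projective stalk, and the Schaps--Zakay-Illouz recipe places the corresponding edge incident to the exceptional vertex, giving the edge joining $v_0$ and $v_i$ as in rule (1). For an inner arc the two-term complex $P_{\overline{j-1}}\to P_i$ has its two ``ends'' governed by the terms $P_i$ and $P_{\overline{j-1}}$, and the construction must attach the edge between $v_i$ and $v_{\overline{j-1}}$ as in rule (2); crucially, the index shift $j\mapsto\overline{j-1}$ is precisely the shift carried by the differential of the complex, so it should drop out of the recipe rather than needing to be imposed by hand.

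The remaining, and most delicate, point is the cyclic (counter-clockwise) ordering of edges around each vertex, which is part of the Brauer tree datum. I would derive this from the planar ribbon structure of $\mathcal{G}_e$: the admissible arcs meeting a fixed disc-vertex inherit a natural cyclic order from the disc, and I must check that the Schaps--Zakay-Illouz ordering on $\End(T)$ agrees with it after the identifications above, in particular around $v_0$, where the multiplicity lives. The case of $\phi_+$ and $G_X^+$ I would then handle either by the dual computation, now with the complex $P_j\to P_{\overline{i+1}}$ and the shift $k\mapsto\overline{k+1}$, or by invoking the $\twosilt_-\leftrightarrow\twosilt_+$ bijection of Theorem~\ref{thm-adachi} together with the shift symmetry relating $\phi_-$ and $\phi_+$.

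I expect the main obstacle to be exactly this faithful transcription through differing conventions. Adachi's arc labels and the Schaps--Zakay-Illouz combinatorics are set up with different orientation and composition conventions (the paper already flags one such mismatch in the Remark after Definition~\ref{defn-triangulation}), so getting the shifts $\overline{j-1}$, $\overline{i+1}$, and above all the counter-clockwise orderings, to line up---and confirming that $v_0$ rather than some other vertex is exceptional---will demand careful bookkeeping rather than new ideas. As a safeguard and alternative route I would set up a mutation induction: the base case is the triangulation of all projective arcs, which maps to the trivial tilting complex $B_{e,m}^\star$ (up to shift) and to the Brauer star under both constructions; since $\phi_\pm$ sends triangulation mutation to tilting mutation by Theorem~\ref{thm-adachi}, and tilting mutation induces the known Brauer tree mutation \cite{KZ,Kauer,Aiha}, it then suffices to verify that $X\mapsto G_X^\pm$ intertwines a single flip of an arc with a single flip of an edge, after which connectivity of the mutation graph of $\mathcal{T}(e)$ delivers the general case.
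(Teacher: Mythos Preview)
Your proposal is correct and follows essentially the same approach as the paper: translate Adachi's arc-to-complex bijection $\phi_\pm$ into the Schaps--Zakay-Illouz recipe, matching projective arcs to edges at the exceptional vertex, inner arcs to edges between $v_i$ and $v_{\overline{j-1}}$ (resp.\ $v_{\overline{k+1}}$), and deriving the cyclic ordering from the planar structure; the paper does exactly this, citing Theorem~3 and the main theorem of \cite{SZ} directly for the cyclic-ordering step and treating the $\phi_+$ case as dual. Your mutation-induction safeguard is a legitimate alternative but is not used in the paper and is unnecessary once the \cite{SZ} citations are in place.
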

\begin{proof}
We prove for the minus part of the proposition; the plus part can be done dually.

Let $\{v_0,\ldots,v_n\}$ be the set of vertices of the Brauer tree $G$, then for any projective arc $\langle\bullet,j\rangle\in X$, we put an edge connecting $v_0$ and $v_j$, then for any inner arc $\langle i,k\rangle$, we put an edge connecting $v_{i-1}$ and $v_k$.  Therefore, for each arc $a\in A_i^-(X)$, $\phi_-(a)$ has degree 0 component $P_i$, and any other arc $a$ attached to $i$ not in $A_i^-(X)$ will send to a pretilting complex with degree $-1$ component $P_{i-1}$ under $\phi_-$.  According to Theorem 3 of \cite{SZ}, the counter-clockwise ordering of edges around each vertices can then be chosen to be compatible with the cyclic ordering on $\{1,\ldots,e\}$, i.e. $E_{i_1},\ldots,E_{i_r}$ is the counter-clockwise ordering of edges around $v_k$, connected with $v_{i_1},\ldots,v_{i_r}$ respectively, if and only if $i_1<i_2<\cdots<i_r$ in $\{1,\ldots,e\}$.  By the main theorem of \cite{SZ}, the tree constructed this way is then the Brauer tree $G$ with exceptional vertex $v_0$ of multiplicity $m$.
\end{proof}

Let $\BrTree(e,m)$ be the set of Brauer tree with $e$ edges and multiplicity $m$.  This proposition says that we obtain a pair of well-defined map $\psi_\pm:\mathcal{T}(e)\to \BrTree(e,m)$ given by $X\mapsto G_X^\pm$ for any $m>1$.

\begin{figure}[hbtp!]
\centering
\includegraphics[width=5in]{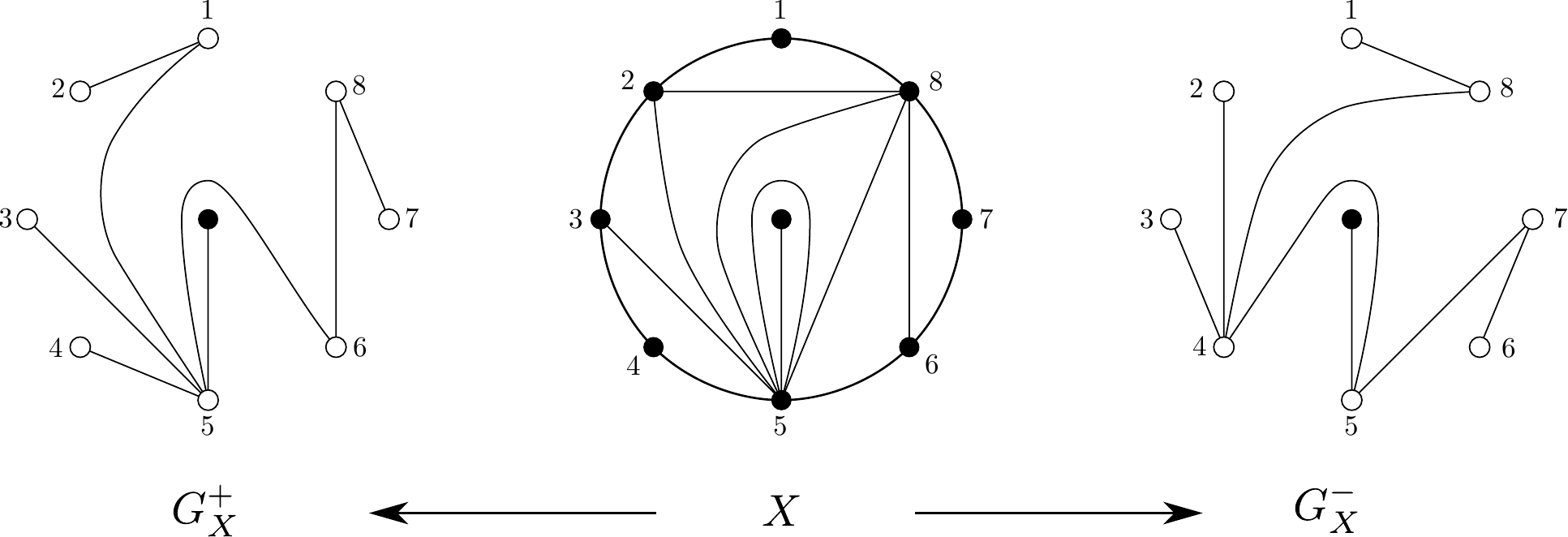}
\caption{Brauer trees from a triangulation of punctured disc}\label{fig-triangulation-to-BTree}
\end{figure}

\begin{corollary}\label{cor-edge-min-naka-stable-summand-corresp}
Let $T$ be two-term tilting complexes in $\twotilt_-(A_n^\ell)$, and $G$ be $\psi_-\phi_-^{-1}(T)$, then each minimal Nakayama-stable summand $M$ of $T$ corresponds to an edge of $G$.  Moreover, under this correspondence, each minimal Nakayama-stable summands concentrated in degree 0 corresponds to an edge emanating from the exceptional vertex of $G$.
\end{corollary}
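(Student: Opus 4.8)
The plan is to chase the chain of bijections $T\leftrightarrow X\leftrightarrow G$ already set up, translating the algebraic notion of a minimal Nakayama-stable summand of $T$ first into an admissible arc of the triangulation $X=\phi_-^{-1}(T)\in\mathcal{T}(e)$, and then into an edge of $G=\psi_-(X)$; the degree-$0$ refinement will then fall out of the explicit form of $\phi_-$ and $\psi_-$.

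First I would identify the minimal Nakayama-stable summands combinatorially. Since the Nakayama functor $\nu_{A_n^\ell}$ sends each indecomposable summand of $T$ to another such summand (shifting $P_i$ to $P_{\overline{i+e}}$, hence sending degree-$0$ stalks to degree-$0$ stalks and genuine two-term complexes to genuine two-term complexes), it permutes the indecomposable summands of $T$ in orbits all of size $n/e$. A Nakayama-stable summand is therefore a union of such orbits, and a minimal one is exactly a single orbit. Using the remark preceding Theorem \ref{thm-SINakayama-2tilt}, that $\nu_{A_n^\ell}$ acts on the punctured $n$-disc as rotation by $2\pi/(n/e)$, the bijection $\twotilt_-(A_n^\ell)\leftrightarrow\mathcal{T}(e)$ of Theorem \ref{thm-SINakayama-2tilt} collapses each such Nakayama-orbit to a single admissible arc of $X$. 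Equivalently, passing through $\twotilt_-(A_e^\ell)$, where the algebra $A_e^\ell=B_{e,m}^\star$ is symmetric and hence $\nu$ is the identity, a minimal Nakayama-stable summand of $T$ corresponds to a single indecomposable summand over $A_e^\ell$, that is, to $\phi_-$ of one arc.

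Second, I would simply read off Proposition \ref{prop-triangualtion-to-BTree}: the construction $X\mapsto G_X^-$ assigns to each admissible arc of $X$ exactly one edge of $G$, with projective arcs $\langle\bullet,i\rangle$ giving the edges incident to $v_0$ and inner arcs giving the remaining edges. As a triangulation of the punctured $e$-disc has exactly $e$ arcs and $G\in\BrTree(e,m)$ has exactly $e$ edges, this assignment is a bijection, and composing with the first step yields the claimed bijection between minimal Nakayama-stable summands of $T$ and edges of $G$. For the ``moreover'' clause I would invoke Theorem \ref{thm-adachi}: under $\phi_-$ the projective arcs are precisely those sent to indecomposable stalk complexes concentrated in degree $0$, while inner arcs are sent to genuine two-term complexes in degrees $-1$ and $0$. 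Since each Nakayama-orbit is homogeneous in this respect, a minimal Nakayama-stable summand is concentrated in degree $0$ iff its arc is projective, and by Proposition \ref{prop-triangualtion-to-BTree}(1) these are exactly the arcs producing edges emanating from the exceptional vertex $v_0$. The main obstacle is the first step: one must verify carefully that the covering bijection of Theorem \ref{thm-SINakayama-2tilt} really matches whole Nakayama-orbits of summands with single arcs of the $e$-disc and is compatible with the degree of concentration, so that ``minimal Nakayama-stable'' on the algebra side corresponds to ``single arc'' on the combinatorial side; once this dictionary is in place, the remaining steps are a direct reading of the two propositions.
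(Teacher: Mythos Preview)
Your proposal is correct and follows exactly the approach of the paper, which simply states that the corollary is ``immediate from combining Proposition \ref{prop-triangualtion-to-BTree} with Theorem \ref{thm-adachi} and Theorem \ref{thm-SINakayama-2tilt}.'' You have merely spelled out the details of this combination: Theorem \ref{thm-SINakayama-2tilt} identifies minimal Nakayama-stable summands with arcs of $X\in\mathcal{T}(e)$, Proposition \ref{prop-triangualtion-to-BTree} identifies arcs with edges of $G$, and Theorem \ref{thm-adachi} tells you that degree-$0$ stalks correspond to projective arcs, which in turn correspond to edges at the exceptional vertex.
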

\begin{proof}
Immediate from combining Proposition \ref{prop-triangualtion-to-BTree} with Theorem \ref{thm-adachi} and Theorem \ref{thm-SINakayama-2tilt}.
\end{proof}
\begin{remark}
Analogous statement holds for any two-term tilting complex in $\twotilt_+(A_n^\ell)$.
\end{remark}

\subsection{Some properties of mutations on two-term tilting complexes}  
Mutation of tilting complex can be reformulated as mutation on the class of derived equivalent algebras:  for a summand $P$ of an algebra $A$, let $T$ be the tilting mutation $\mu_X^-(A)$, then we can define the left algebra mutation as the algebra $E_T$.  On the class of Brauer tree algebras, this gives a mutation on the Brauer trees.  This mutation has been given in several literature already \cite{KZ,Kauer,Aiha}.  We recommend \cite{Aiha} for the most concise and precise description that is sufficient for our needs.

\begin{definition}[Mutation of Brauer tree]\label{defn-muBTree}
Let $G$ be a Brauer tree, and $i$ be an edge of $G$, the \emph{left mutation}\index{Brauer tree!mutation}\index{mutation!of Brauer tree} of $G$ at $i$, denoted $\mu_i^-(G)$, can be constructed as follows.  Suppose the vertices attached to $i$ are $u$ and $v$, with $j$ and $k$ being the previous edges in the cyclic ordering around $u$ and $v$ respectively.   The mutated tree is given by removing the edge $i$ from $G$, and replace with an edge $i'$ connected to $j$ and $k$.  In particular, if (without loss of generality) $u$ is only of valency one (i.e. an extremal vertex), then $i'$ is attached to $u$ again.

Similarly, define the right mutation $\mu_i^+(G)$ by removing $i$ and connecting $i'$ to the \emph{next} edges in the cyclic ordering around $u$ and $v$.  The two mutations can be visualised as in Figure \ref{fig-btreeMutate}.
\end{definition}
\begin{figure}[hbtp!]
\centering
\includegraphics[width=5in]{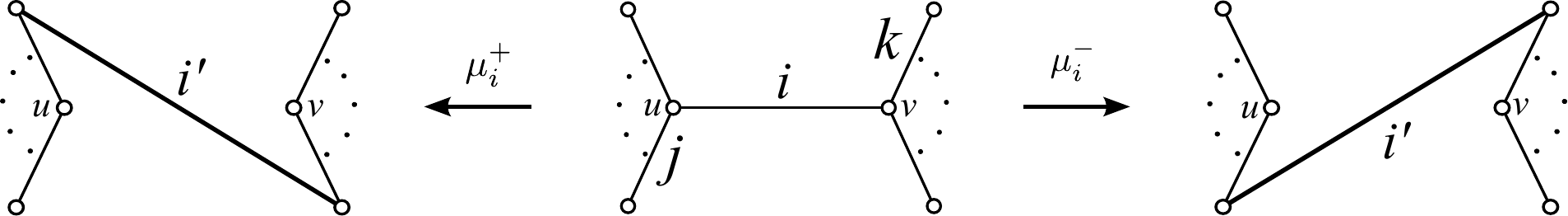}
\caption{Mutation of Brauer tree.}\label{fig-btreeMutate}
\end{figure}

Needless to say, mutation of Brauer trees is compatible with mutations of (two-term) tilting complexes and triangulations on punctured disc.  In this subsection, we will only consider the case when the algebra is $B_{e,m}^\star$, so if $T$ is a star-to-tree tilting complex which takes the Brauer star to a Brauer tree $G$, and let $i$ be an edge correspond to a summand $X$ of $T$, then $\mu_X^\pm(T)$ is a star-to-tree tilting complex which takes the Brauer star to the mutated Brauer tree $\mu_i^\pm(G)$.

From \cite[Theorem D]{CKL} (or \cite[Thm 3.5]{Aiha2}), we know that every tilting complex of $A_n^\ell$ can be obtained by a sequence of irreducible left (or right) mutations starting from $A_n^\ell$.  Our aim now is to find some ``canonical sequence" to obtain any give two-term tilting complex.  We start with some sufficient criteria for a mutated tilting complex to be two-term.

\begin{lemma}\label{lem-2tilt-trick}
Let $T$ be a tilting complex concentrated in non-positive (resp. non-negative) homological degrees, and $X$ be a minimal Nakayama-stable summand of $T$, we have the following:
\begin{enumerate}[(1)]
\item If $\mu_X^-(T)$ (resp. $\mu_X^+(T)$) is two-term, then so is $T$.
\item If $T$ is two-term and $X$ is a direct sum of stalk complexes concentrated in homological degree 0, then $\mu_X^-(T)$ (resp. $\mu_X^+(T)$) is two-term concentrated in homological degree $-1$ (resp. $+1$) and $0$.
\end{enumerate}
\end{lemma}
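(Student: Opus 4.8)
The plan is to analyze both parts of Lemma \ref{lem-2tilt-trick} by unwinding the defining triangle of silting mutation (Definition \ref{defn-silting-mutation}) and tracking where the homology can live. For a tilting complex $T$ concentrated in non-positive degrees, say degrees $0$ and $-1$ for the two-term case, I would write $T = X \oplus M$ with $X$ a minimal Nakayama-stable summand, and examine the triangle
\[
M' \to X \to \mu_X^-(T)|_X \to M'[1]
\]
where $M'\to X$ is a minimal left $\add M$-approximation. The key observation is that the approximation $M'$ is built from summands of $M$, hence also lives in non-positive degrees, and the cone of a map between such complexes has its homology constrained by the degrees of source and target. Since we work in $K^b(\proj A)$, I would take minimal (radical) representatives of all complexes so that degreewise reasoning is legitimate.

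**Part (1): contrapositive via degree bounds.**

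For part (1), I would argue by contrapositive. Suppose $T$ is \emph{not} two-term, so some indecomposable summand $T_i$ of $T$ has a nonzero term in a degree $d \le -2$ (using that $T$ is concentrated in non-positive degrees). The summands $U_i$ of $\mu_X^-(T)$ that are \emph{not} changed are exactly the $T_i$ that are not summands of $X$, so if such a $T_i$ is the offending summand, then $\mu_X^-(T)$ inherits this non-two-term term and we are done. The harder sub-case is when the only non-two-term summands lie inside $X$. Here I would use that the triangle $M' \to T_i \to U_i \to M'[1]$ forces $U_i$ to be quasi-isomorphic (up to homotopy, via the cone) to a complex built from $T_i$ and $M'$; since $M'$ is supported in the degrees of $M$ (which is two-term, being part of $T$ minus $X$), the degree-$d$ term of $T_i$ with $d\le -2$ cannot be cancelled by $M'$ in the minimal model of the cone, so $U_i$ is still not two-term. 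The dual statement for $\mu_X^+$ follows by the obvious symmetry (reversing degrees).

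**Part (2): pinning down the new degrees.**

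For part (2), assume $T$ is two-term (degrees $0,-1$) and $X$ is a sum of stalk complexes in degree $0$. The left approximation $M' \to X$ now has $M'$ a complex built from $\add M$, i.e. supported in degrees $0$ and $-1$. Taking the cone of a map from a degree-$\{0,-1\}$ complex $M'$ into the degree-$0$ stalk $X$ produces a complex whose terms can only appear in degrees $0, -1$, with a possible new term in degree $+1$ coming from the shift $M'[1]$ contributing the degree-$(-1)$ part of $M'$ shifted up. I would make this precise by writing out the mapping cone explicitly: $\cone(M'\to X)$ has term $M'^{p+1}\oplus X^p$ in degree $p$, and since $X$ is concentrated in degree $0$ and $M'$ in degrees $0,-1$, the cone is concentrated in degrees $-1, 0, +1$. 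The main obstacle is ruling out degree $-1$ entirely if one hopes for a cleaner statement, but the lemma only claims concentration in degrees $-1, 0, +1$ narrowed to $\{-1,0\}$; so I would verify that after passing to the \emph{minimal} (radical) representative, the degree-$0$ contribution from $M'^{1}$ (which is zero since $M'$ has no degree $+1$ term) vanishes, leaving exactly degrees $-1$ and $0$ for the new summands, while the unchanged summands $M$ remain in degrees $-1,0$. Hence $\mu_X^-(T)$ is concentrated in degrees $-1$ and $0$, as claimed; the $\mu_X^+$ statement is the mirror image with degrees $0$ and $+1$. The delicate point throughout will be justifying that the minimal representative of the cone genuinely has no homology escaping the asserted range — this requires invoking that minimal left approximations produce no splitting (hence no degree-shifting) contributions, which I would handle by the standard minimality argument in $K^b(\proj A)$.
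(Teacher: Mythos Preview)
Part (2) has a concrete error. A \emph{left} $\add M$-approximation of $T_i\in\add X$ is a map $T_i\to M'$, so the mutation triangle reads $T_i\to M'\to U_i\to T_i[1]$ (the displayed triangle in Definition~\ref{defn-silting-mutation} is misprinted; the paper's own proof of (2) confirms this by taking the cone of a map \emph{from} the stalk \emph{to} a two-term complex). With $T_i$ a degree-$0$ stalk and $M'$ in degrees $0,-1$, the cone $U_i^p=T_i^{p+1}\oplus M'^p$ is then visibly concentrated in degrees $-1,0$, and that is the entire argument. In your reversed version you also mishandle the shift: in cohomological grading $[1]$ \emph{lowers} degree, so your own formula $M'^{p+1}\oplus X^p$ actually lives in degrees $-2,-1,0$, not $-1,0,+1$; the spurious term is $M'^{-1}$ sitting in degree $-2$, which you never address and which has no reason to cancel in the minimal model.

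For part (1) your contrapositive degree-tracking is a genuinely different route from the paper's and, once the triangle direction is corrected, can be completed: taking $d\le -2$ minimal for $T_i$ and $M$ two-term, one finds $U_i^{d-1}=T_i^d$ with zero incoming differential and radical outgoing differential $-d_{T_i}$ (since $M'$ vanishes below degree $-1$), so this term survives in the minimal representative. The paper instead uses the partial order on silting objects: $\mu_X^-(T)$ two-term is equivalent to $A\geq \mu_X^-(T)\geq A[1]$ by \cite[2.9]{Aiha2}, and combining this with $A\geq T\gneqq \mu_X^-(T)$ from \cite[2.35]{AI} gives $A\geq T\geq A[1]$, hence $T$ is two-term. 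The paper's argument is shorter and avoids all cone bookkeeping; your approach is more elementary and self-contained, but requires exactly the minimal-model verification you flagged as the delicate point.
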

\begin{proof}
We prove the minus-version of the statements, plus-version can be done analogously.

(1):  Recall from \cite{AI} that there is a partial order on the tilting complexes defined by $T\geq U$ if $\Hom_\catT(T,U[i])=0$ for all $i> 0$.  As $T$ and $\mu_X^-(T)$ are both concentrated in non-positive degrees, it follows from \cite[2.9]{Aiha2} that, $A\geq \mu_X^-(T)\geq A[1]$ and $A\geq T \geq A[l]$ for some $l\geq 1$.  We also have $T \gneqq \mu_X^-(T)$ from  \cite[2.35]{AI} (for self-injective version see \cite[5.11]{CKL}).  These combine to give $A \geq T \gneqq \mu_X^-(T)\geq A[1]$, and so $l=1$, which means $T$ is two-term by \cite[2.9]{Aiha2}.

(2):  This is easy to see from the definition of mutation, for $T=X\oplus M$ with $\mu_X^-(T)=Y\oplus M$, then $Y$ is the cone of a morphism from stalk complex concentrated in degree 0 to a two-term complex concentrated in degree 0 and $-1$, so $Y$ is two-term as well.
\end{proof}

\begin{prop}\label{thm-2tiltAlgo}
Suppose $T\in\twotilt_-(A_n^\ell)$ (resp. $T\in\twotilt_+(A_n^\ell)$), then $T$ can be obtained by $h$ irreducible left (resp. right) mutations starting from $A_n^\ell$ (resp. $A_n^\ell[1]$) for some $h<e=\gcd(n,\ell)$.
\end{prop}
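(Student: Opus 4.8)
The plan is to reduce to the Brauer star case and then translate the statement into a purely combinatorial claim about mutations of Brauer trees, with the two-termness controlled at every step by Lemma \ref{lem-2tilt-trick}(2). First I would invoke the mutation-preserving bijection $\twotilt_-(A_n^\ell)\leftrightarrow\twotilt_-(A_e^\ell)$ of Theorem \ref{thm-SINakayama-2tilt}. Since the algebra $A_n^\ell$, viewed as a tilting complex over itself, corresponds to the all-projective (unique maximal) triangulation, this bijection sends $A_n^\ell$ to $A_e^\ell$ and preserves both the number and the handedness of irreducible mutations; moreover $e\mid\ell$ forces $e\le\ell$, so the order-preserving half of Theorem \ref{thm-adachi} is available. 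Hence it suffices to treat $A:=A_e^\ell=B_{e,m}^\star$. This algebra is symmetric, so its Nakayama functor is the identity; consequently every complex is Nakayama-stable, every silting complex is tilting (so $\twotilt_\pm(A)=\twosilt_\pm(A)$), and every irreducible mutation is a mutation at a single indecomposable summand.

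Next I would set up the combinatorial bookkeeping. Via $\phi_-$ and Proposition \ref{prop-triangualtion-to-BTree}, write $T=\phi_-(X)$ for $X\in\mathcal{T}(e)$ and let $G=G_X^-$ be the associated Brauer tree with exceptional vertex $v_0$. By Corollary \ref{cor-edge-min-naka-stable-summand-corresp}, the indecomposable stalk summands of $T$ sitting in degree $0$ are exactly the edges of $G$ incident to $v_0$. I then set $h:=e-\deg_G(v_0)$, the number of edges of $G$ \emph{not} incident to $v_0$. Because $G$ is a connected tree with $e\ge 1$ edges, and hence at least two vertices, we have $\deg_G(v_0)\ge 1$, which immediately yields $h\le e-1<e$. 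This supplies the required bound on the number of mutations, independently of the rest of the argument.

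The substance is then to show, by induction on $h$, that $T$ is reached from $A$ by exactly $h$ irreducible left mutations, each performed at a summand that is at that moment a degree-$0$ stalk complex. By Lemma \ref{lem-2tilt-trick}(2) every intermediate complex is then two-term, concentrated in degrees $-1,0$, with the untouched stalk summands remaining in degree $0$; by Theorem \ref{thm-adachi}(3) this keeps us inside $\twotilt_-(A)$ at each stage. The base case $h=0$ is $G=\star$, $T=A$. For the inductive step I would use the compatibility recorded after Definition \ref{defn-muBTree}, namely that a left tilting mutation at a degree-$0$ summand realizes the left Brauer-tree mutation $\mu_i^-$ at the corresponding edge $i$ incident to $v_0$. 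The step reduces to exhibiting a tree $G'$ with $\deg_{G'}(v_0)=\deg_G(v_0)+1$ and an edge $i$ of $G'$ at $v_0$ with $\mu_i^-(G')=G$; equivalently, one ``un-walks'' an outermost leaf of a branch of $G$ back to $v_0$. By induction $G'$ is reached from $\star$ in $h-1$ such steps, and one further left mutation at $i$ produces $G$. Transporting the whole sequence back along the covering bijection of Theorem \ref{thm-SINakayama-2tilt} gives the claim for $A_n^\ell$, and the ``resp.'' statement is entirely dual, using $\phi_+$, right Brauer-tree mutations $\mu_i^+$, and starting point $A_n^\ell[1]$. I note in passing that the bare existence result \cite[Theorem D]{CKL} is not enough here: an arbitrary left-mutation path from $A$ to $T$ need not stay in $\twotilt_-$, nor is its length controlled, which is exactly why mutating only at exceptional (degree-$0$) edges is essential.

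The hard part will be the combinatorial core of the inductive step: verifying that \emph{every} Brauer tree $G$ with at least one edge off $v_0$ arises as the left Brauer-tree mutation, at some edge incident to $v_0$, of a tree with strictly larger $\deg(v_0)$. Against the cyclic-ordering prescription of Definition \ref{defn-muBTree} (Figure \ref{fig-btreeMutate}), this requires checking that the reverse move reattaches the chosen leaf to $v_0$ rather than to some interior vertex, and that the count $\deg(v_0)$ changes by exactly one. The care lies in selecting a leaf on a branch for which the cyclic orders around the two relevant vertices cooperate, and this is where I expect the delicate bookkeeping to concentrate.
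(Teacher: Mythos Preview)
Your proposal is correct and follows essentially the same strategy as the paper: reduce to the symmetric Brauer star $A_e^\ell=B_{e,m}^\star$ via Theorem \ref{thm-SINakayama-2tilt}, use Corollary \ref{cor-edge-min-naka-stable-summand-corresp} and Lemma \ref{lem-2tilt-trick}(2) to see that mutating only at degree-$0$ stalks (equivalently, edges at the exceptional vertex) keeps you in $\twotilt_-$, and reduce to the combinatorial fact that any Brauer tree with exceptional valency $e-h$ is reached from the star by $h$ such mutations. The only presentational difference is that where you set up an explicit induction to ``un-walk'' a leaf back to $v_0$, the paper instead cites the algorithm in \cite[3.1]{KZ} (which right-mutates $G$ to $\star$ increasing $\deg(v_0)$ by one at each step) and then reverses the direction of the sequence via \cite[2.33]{AI}; this dispatches exactly the ``hard part'' you flagged without further bookkeeping.
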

\begin{proof}
Again, we only prove the minus-version of the statement.

Let $U$ be the unique two-term tilting complex in $\twotilt_-(A_e^{em})$ with any $m>1$ corresponding to $T$ under the correspondence of Theorem \ref{thm-SINakayama-2tilt}.  Then $E_T\isom B_{e,m}^G$ with $G=\psi_-\phi_-^{-1}(T)$ with valency of exceptional vertex being $h<e$.

By Lemma \ref{lem-2tilt-trick} and Corollary \ref{cor-edge-min-naka-stable-summand-corresp}, it suffices to prove the following combinatorial result:  The Brauer tree $G$ with valency $e-h$ at the exceptional vertex can be obtained by $h$ left mutations at edges attached to the exceptional vertex.  Thanks to \cite[2.33]{AI}, this can be proved by finding an algorithm to obtain the Brauer star from the Brauer tree using right mutations, such that after each mutation, the valency of exceptional vertex is increased by 1, which can be found in the proof of \cite[3.1]{KZ}.
\end{proof}
\begin{example}
Let $T=\bigoplus_{i=1}^6T_i\in\twotilt(B_{e,m}^\star)$ be given by 
\[\begin{array}{rccrccrcc}
T_1 = (0   & \to & P_2), & T_2 = (P_3 & \to & P_2), & T_3 = (\;\;\,  0 & \to & P_4),\\
T_4 = (P_1 & \to & P_4),  & T_5 = (P_1 & \to & P_5), & T_6 = (P_1 & \to & P_6).
\end{array}\]
Use Proposition \ref{prop-triangualtion-to-BTree} to obtain a Brauer tree and apply the proof of Proposition \ref{thm-2tiltAlgo} to obtain the mutation sequence $T=\mu_{P_5}^-\mu_{P_6}^-\mu_{P_1}^-\mu_{P_3}^-(A)$.  The details of this computation is shown in Figure \ref{fig-2tiltAlgo}.
\begin{figure}[hbtp!]
\centering
\includegraphics[width=3in]{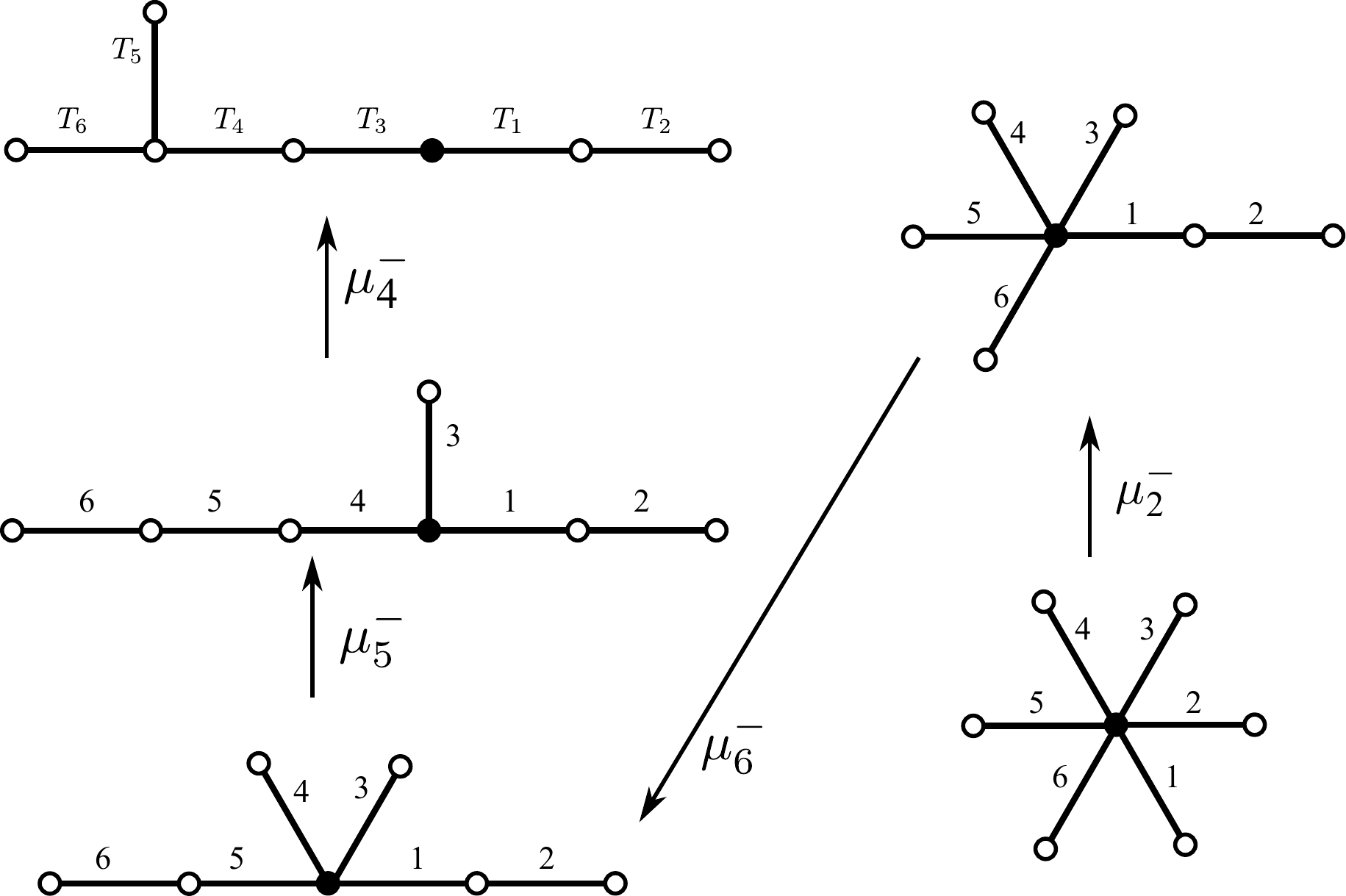}
\caption{An example for Proposition \ref{thm-2tiltAlgo}}\label{fig-2tiltAlgo}
\end{figure}
\end{example}

\subsection{Observations on mutation of simple-minded systems}
Having known how to obtain the sequence of mutation to reach any given two-term tilting complex, we need some observations on the effect of mutation on simple-minded system.  We will only do the case $A_e^{em}=B_{e,m}^\star$, analogous result can be obtained by covering theory (c.f. Remark \ref{rmk-covering-theory}).

\begin{lemma}\label{lem-mutateSimples}
Let $\sms$ be the set of simple $B_{e,m}^G$-modules, and $S_i$ be a simple $B_{e,m}^G$-module corresponding to an edge $i$ of $G$.  Then an irreducible left mutation $\mu_{S_i}^-(\sms)$ replaces exactly two (indecomposable) modules in $\sms$ if $i$ is a leaf, or replaces exactly three modules in $\sms$ otherwise.
In particular, at most three (indecomposable) modules in any simple-minded system of any Brauer tree algebras will be replaced after performing an irreducible left mutation.
\end{lemma}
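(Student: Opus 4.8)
The plan is to decide exactly which members of $\sms$ are replaced by reducing the question to first extension groups between simples, to count those from the quiver of $B_{e,m}^G$, and then to transport the result along a stable equivalence for the final assertion. Unwinding Definition \ref{mutation-sms} with $\mathcal{X}=\{S_i\}$, the summand $S_i$ is always replaced by $Y_i=\Omega^{-1}S_i$, which is genuinely different from $S_i$ (the functor $\Omega^{-1}$ sends the simple off the bottom rim of the tube $\integer A_{em}/\langle\tau^e\rangle$, so $\Omega^{-1}S_i$ is indecomposable non-simple). For every other index $j\neq i$, the object $Y_j$ is the cone of the minimal left $\filt(\{S_i\})$-approximation of $\Omega S_j$, so the entire problem is to determine when that approximation vanishes.

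The key claim is that, for $j\neq i$, one has $Y_j=S_j$ if and only if $\sthom_A(\Omega S_j,S_i)=0$. For the forward implication I would argue by d\'evissage: given any $F\in\filt(\{S_i\})$, its defining triangle $F'\to F\to S_i\to\Omega^{-1}F'$ together with the long exact sequence obtained by applying $\sthom_A(\Omega S_j,-)$ shows, by induction on filtration length, that $\sthom_A(\Omega S_j,S_i)=0$ forces $\sthom_A(\Omega S_j,F)=0$; hence the minimal left approximation is $0$ and $Y_j=S_j$. For the reverse implication I would use that $\mu_{S_i}^-(\sms)$ is itself an sms: if $Y_j=S_j$ held while $Y_i=\Omega^{-1}S_i$, then the orthogonality axiom \eqref{eqn-sms-ortho-axiom} for the new system would give $\sthom_A(Y_j,Y_i)=0$, contradicting $\sthom_A(S_j,\Omega^{-1}S_i)\isom\sthom_A(\Omega S_j,S_i)\neq 0$. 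Combined with the standard isomorphism $\sthom_A(\Omega S_j,S_i)\isom\Ext_A^1(S_j,S_i)$ for self-injective $A$, this shows that the replaced modules are precisely $S_i$ together with those $S_j$ admitting an arrow $j\to i$ in the quiver of $B_{e,m}^G$.

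It then remains to count the arrows into $i$. These are the cyclic predecessors of $i$ around the two endpoints of the edge $i$; an endpoint of valency $1$ contributes only a loop $i\to i$ when it is exceptional, and nothing otherwise, while the two predecessors coming from distinct endpoints are themselves distinct because $G$ is a tree and so has no multiple edges. Consequently, for $e\geq 2$, the number of replaced modules equals $1+\#\{\text{endpoints of }i\text{ of valency}\geq 2\}$, which is $2$ exactly when $i$ is a leaf and $3$ otherwise. For the concluding ``in particular'', I would use that every sms of a Brauer tree algebra $B$ is of the form $F(\sms_{B_{e,m}^{G'}})$ for a stable equivalence $F$ induced by a standard derived equivalence from a Brauer tree algebra $B_{e,m}^{G'}$; as such a triangle equivalence is bijective on isoclasses and commutes with sms mutation, one has $\mu_{F(S)}^-(F(\sms_{B_{e,m}^{G'}}))=F(\mu_{S}^-(\sms_{B_{e,m}^{G'}}))$, which therefore replaces the same two or three modules as $\mu_S^-(\sms_{B_{e,m}^{G'}})$.

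I expect the crux to be the ``if and only if'' characterization of the changing summands: the d\'evissage disposes of one direction, but excluding $Y_j\isom S_j$ in the non-vanishing case genuinely needs the orthogonality of the already-constructed sms $\mu_{S_i}^-(\sms)$. A secondary point demanding care is the exceptional leaf, where the loop $i\to i$ appears: it encodes a self-extension of $S_i$ but, since the index $j=i$ has already been counted, it does not enlarge the total, so a leaf always yields exactly two replaced modules irrespective of whether its extremal vertex is the exceptional one.
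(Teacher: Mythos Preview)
Your argument is correct and fleshes out what the paper records only as a ``straightforward calculation using definition of mutation'' (with an alternative citation to Okuyama \cite{Ok} and \cite[Lemma~3.4]{Aiha}): you pin down the replaced summands as exactly those $S_j$ with $\Ext_A^1(S_j,S_i)\neq 0$ via the orthogonality of the mutated sms, then count cyclic predecessors of $i$ in $G$, and your transport along a stable equivalence for the ``in particular'' is verbatim the paper's step $\mu_X^-(\sms)=\phi^{-1}(\mu_{\phi X}^-(\phi\sms))$. One small inaccuracy worth fixing: your justification that $\Omega^{-1}S_i\neq S_i$ via ``off the bottom rim'' presumes $S_i$ lies on the rim of the tube, which by Lemma~\ref{lem-simpleLeaf} holds only for leaves at non-exceptional extremal vertices, not for arbitrary edges of $G$; the conclusion nonetheless survives for $e\geq 2$, since then some endpoint of $i$ has valency $\geq 2$ or is exceptional, so $P_i$ has Loewy length $\geq 3$ and $\Omega^{-1}S_i=P_i/\Soc P_i$ is non-simple.
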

\begin{proof}
The first statement follows from straightforward calculation using definition of mutation, or alternatively, implicitly implied by a result of Okuyama in his unpublished preprint \cite[Lemma 2.1]{Ok}, which also appears in the proof in \cite[Lemma 3.4]{Aiha}.

Now suppose $\sms$ is an arbitrary simple-minded system of arbitrary Brauer tree algebra, we know that there is a stable equivalence $\phi$ making $\sms$ a simple-image.  The last statement now follows from the fact that $\mu_{X}^-(\sms) = \phi^{-1}(\mu_{\phi X}^-(\phi\sms))$.
\end{proof}
\begin{remark}
Dually, the same result holds for irreducible right mutation.  We also remark that, in the notation of Figure \ref{fig-btreeMutate}, then the modules that are replaced after mutating at $S_i$ is precisely the simple modules $S_j$ and/or $S_k$.
\end{remark}

Recall from Proposition \ref{prop-surject-config} that for each $\sms\in\smss(B_{e,m}^G)$ with $m>1$, there is a sms $\widetilde{\sms}\in\smss(B_{e,1}^G)$; the following observation connect the mutation action between them.

\begin{lemma}\label{lem-mutateSimples-multi-free}
Suppose $\sms$ is a simple-minded system of $B_{e,m}^G$ for some Brauer tree $G$ with $m>1$.  Then for any $X\in\sms$, $\widetilde{\mu_X^\pm(\sms)} = \mu_{\widetilde{X}}^\pm(\widetilde{\sms})$.
\end{lemma}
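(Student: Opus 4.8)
The plan is to read the asserted identity as a purely combinatorial statement about configurations. Under the identification $\smss(B_{e,m}^G)\cong\Conf(\integer A_{em}/\langle\tau^e\rangle)$, both the mutation $\mu_X^\pm$ and the tilde map of Proposition \ref{prop-surject-config} are operations on configurations that do not refer to $G$, so it suffices to argue for $B_{e,m}^\star=A_e^{em}$. Here $\gcd(e,em)=e$, so the Nakayama functor acts trivially on the stable AR-quiver, every single vertex $X$ is minimal Nakayama-stable, and the irreducible mutation $\mu_X^\pm$ is mutation at the one element $X$ in the sense of Definition \ref{mutation-sms}. Recall also from the discussion before Proposition \ref{prop-surject-config} that each configuration vertex $M_{i,l}$ has $l$ in the bottom strip $\{1,\dots,e\}$ or the top strip $\{e(m-1)+1,\dots,em\}$, and that tilde folds the top strip onto the bottom one by $l\mapsto \widetilde l:=l-e(m-1)$.

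\textbf{Step 1: tilde commutes with the Heller translate.}
The computation driving everything is that for every configuration vertex $M_{i,l}$ one has (reducing first indices modulo $e$)
\[
\widetilde{\Omega^{\pm1}(M_{i,l})}=\Omega^{\pm1}(\widetilde{M_{i,l}}).
\]
Indeed, since $\Omega$ sends $M_{i,l}$ to $M_{\overline{i-l+1},\,em+1-l}$ and interchanges the two strips, a direct substitution shows both $\widetilde{\Omega(M_{i,l})}$ and $\Omega(\widetilde{M_{i,l}})$ equal $M_{\overline{i-l+1},\,e+1-\widetilde l}$, the latter being computed with Loewy length $e$ in $A_e^e$; the case of $\Omega^{-1}$ is identical. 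This already settles the replacement $X\mapsto\Omega^{-1}X$ (resp. $\Omega X$) in the mutation, namely $\widetilde{\Omega^{-1}X}=\Omega^{-1}\widetilde X$.

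\textbf{Step 2: the remaining changed elements.}
By Lemma \ref{lem-mutateSimples} a left mutation at $X$ alters at most two further members $X_j$ of $\sms$, and each replaced $Y_j$ is the cone of the minimal left $\filt(\{X\})$-approximation of $\Omega X_j$. Because $X$ is a single stable brick, this approximation—hence $Y_j$—is determined by the stable-Hom data of $\Omega X_j$ into the filtration closure $\filt(\{X\})$, i.e. by the local position of $X_j,\Omega X_j,X$ in the stable AR-quiver. I would therefore reduce the claim to showing that tilde preserves the relevant stable-Hom spaces among configuration vertices and transports the defining approximation triangles. Granting this, the triangle defining $Y_j$ maps to the triangle defining the corresponding element of $\mu_{\widetilde X}^\pm(\widetilde\sms)$, so $\widetilde{Y_j}$ is exactly that element, and together with Step 1 this yields the identity termwise; the plus-version then follows by the same argument, or formally via $\Omega$ and Corollary \ref{cor-disjoint-sms-type}.

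\textbf{Main obstacle.}
The genuine difficulty is precisely the transport of the approximation in Step 2. The folding $l\mapsto l-e(m-1)$ identifies the top and bottom strips, which are far apart in the tall tube $\integer A_{em}/\langle\tau^e\rangle$ but become adjacent after folding in $\integer A_e/\langle\tau^e\rangle$; a priori this could create new morphisms and change which approximation is minimal. I would resolve this using the configuration axioms of Definition \ref{def-combinatorial-configuration} together with the covering/insertion analysis preceding Proposition \ref{prop-surject-config}: that analysis exhibits tilde as induced by the strip-folding covering, under which the mesh-category Hom spaces between configuration vertices are preserved and no new extensions among configuration elements are introduced. Establishing this Hom-preservation rigorously—so that the minimal $\filt(\{X\})$-approximation and its cone are genuinely carried across—is the crux of the proof, after which the statement is a mechanical comparison of the two local mutation moves.
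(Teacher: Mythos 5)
Your proposal does not close the proof: everything is made to hinge on the ``Main obstacle'' you name at the end---that the fold $l\mapsto l-e(m-1)$ preserves the relevant stable Hom-spaces and carries the minimal $\filt(\{X\})$-approximation triangle of $\Omega X_j$ to the corresponding triangle downstairs---and this is precisely the step you do not prove. Worse, the tool you propose for it is not available. The fold relates tubes of \emph{different ranks}, $\integer A_{em}/\langle\tau^e\rangle$ and $\integer A_e/\langle\tau^e\rangle$; this is Riedtmann's insertion/pruning combinatorics, not covering theory. Covering theory (Remark \ref{rmk-covering-theory}) identifies tubes of the same height and different circumference, and is induced by an actual functor; no functor between the stable categories of $A_e^{em}$ and $A_e^e$ induces your vertex-level fold, so phrases like ``transports the defining approximation triangles'' have no meaning until you construct the comparison by hand. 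Since cones and minimality of approximations are not determined by Hom-dimensions between the endpoints alone, your Step 2 is a genuine gap, not a mechanical verification. (Your Step 1, the commutation of tilde with $\Omega^{\pm 1}$, is a correct computation, but it only handles the elements of the mutated subset itself.)

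The paper's proof avoids all local analysis in the AR-quiver by working one level up, with Brauer trees. By the simple-image theorem of \cite{CKL}, $\sms$ is the image of the simple $B_{e,m}^H$-modules for some tree $H$, and the key facts are: (i) mutating $\sms$ at $X$ (corresponding to an edge $i$ of $H$) produces the simple-image of $B_{e,m}^{\mu_i^\pm(H)}$, i.e.\ sms-mutation is tree-mutation; and (ii) the tilde operation is exactly ``same tree, multiplicity $1$'', so $\widetilde{\sms}$ is the simple-image of $B_{e,1}^H$. Then both $\widetilde{\mu_X^\pm(\sms)}$ and $\mu_{\widetilde{X}}^\pm(\widetilde{\sms})$ are identified as the simple-image of $B_{e,1}^{\mu_i^\pm(H)}$-modules, and the lemma follows with no need to compare approximations across the fold. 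If you want to salvage your combinatorial route, you would in effect have to reprove (i) and (ii) vertex by vertex, which is exactly the work the structural argument is designed to bypass.
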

\begin{proof}
Suppose $\sms$ is image of simple $B_{e,m}^H$-modules, then $\widetilde{\sms}$ is image of simple $B_{e,1}^H$-module.  Mutating at $X\in\sms$ corresponding to an edge $i$ in $H$ implies that we have $\mu_X^\pm(\sms)$ as image of simple $B_{e,m}^{\mu_i^\pm(H)}$-modules.  So $\widetilde{\mu_X^\pm(\sms)}$ is the image of simple $B_{e,1}^{\mu_i^\pm(H)}$-modules given by $\mu_{\widetilde{X}}^\pm(\widetilde{\sms})$.
\end{proof}

The following observation is crucial for the proof of the main theorem.  Also recall from previous section that the type of a configuration indicates the rim where the simple module lies after tree pruning (c.f. Lemma \ref{cor-cutTree}).

\begin{prop}\label{prop-respectTauOrbit}
Let $\sms$ is a simple-minded system of $B_{e,m}^\star$ which is a $B_{e,m}^G$-simple-image for some Brauer tree $G$, with $m>1$ and the valency of exceptional vertex of $G$ being $h>1$.  If $X\in\sms$ is the image of a simple $B_{e,m}^G$-module corresponding to an edge attached to the exceptional vertex, then $\sms$, $\mu_X^+(\sms)$, and $\mu_X^-(\sms)$ are all of the same type.
\end{prop}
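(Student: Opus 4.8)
### Proof proposal for Proposition \ref{prop-respectTauOrbit}

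The plan is to reduce the statement to a combinatorial assertion about configurations and their \emph{type} (bottom- versus top-type) as defined via the Tree Pruning Lemma (Corollary \ref{cor-cutTree}), and then to verify that the three configurations $\config$, $\mu_X^+(\config)$, $\mu_X^-(\config)$ all prune down to the same truncated configuration $\config_{h',m}^\pm$. Since sms's of $B_{e,m}^\star$ are identified with $\tau^{e\integer}$-stable configurations of $\integer A_{em}/\langle\tau^e\rangle$, and since type is determined solely by the $\tau$-orbit in which $\config$ lies (Corollary \ref{cor-cutTree}), it suffices to show that mutating at $X$ does not move the relevant pruned simple across the two rims of the stable AR-quiver.

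First I would translate the hypothesis into tree-combinatorial language. By Corollary \ref{cor-edge-min-naka-stable-summand-corresp} and Lemma \ref{lem-simpleLeaf}, the edge $i$ of $G$ corresponding to $X$ is attached to the exceptional vertex $v_0$, whose valency $h>1$. By Lemma \ref{lem-mutateSimples} (and the remark following it), the irreducible left mutation $\mu_{S_i}^-(\sms)$ replaces exactly the modules $S_j$ and/or $S_k$, where $j,k$ are the neighbouring edges in the cyclic orderings around the two endpoints of $i$; the same holds for $\mu_{S_i}^+$. The key point is that because $i$ is attached to the \emph{exceptional} vertex, the mutated tree $\mu_i^\pm(G)$ still has $v_0$ as its (unique) exceptional vertex, and crucially the valency of $v_0$ in the mutated tree is unchanged: mutating at an edge $i$ adjacent to $v_0$ reattaches the new edge $i'$ to $v_0$ on one side (one of $j,k$ lies around $v_0$), so $v_0$ keeps valency $h$. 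Hence the hypothesis $h>1$ guarantees that $v_0$ never becomes an extremal vertex during this single mutation, so the exceptional vertex survives pruning in the same way.

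Next I would run the tree-pruning process in parallel on all three trees $G$, $\mu_i^-(G)$, $\mu_i^+(G)$. Since $v_0$ has the same valency $h$ in each, the prunings terminate at a Brauer star $B_{h,m}^\star$ with the same number of edges, and by Corollary \ref{cor-cutTree} the resulting truncated configuration is $\config_{h,m}^-$ or $\config_{h,m}^+$ depending only on the $\tau$-orbit. The task thus becomes: show that $\mu_X^\pm$ preserves this $\tau$-orbit parity. I would argue this by reducing to the multiplicity-free case via Lemma \ref{lem-mutateSimples-multi-free}, which gives $\widetilde{\mu_X^\pm(\sms)} = \mu_{\widetilde{X}}^\pm(\widetilde{\sms})$, and then observe that type is detected on the stable AR-quiver of $B_{e,m}^\star$ by whether the pruned simple sits on the bottom rim ($y=1$) or top rim ($y=em$). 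Because $X$ corresponds to an edge at $v_0$, Lemma \ref{lem-simpleLeaf} places its leaf-image on a rim only at the final pruning stage, and the mesh-category computation of the mutation triangle $\Omega X_j \to X \to Y_j \to X_j$ in Definition \ref{mutation-sms} keeps the replaced vertices $S_j, S_k$ within the same rim-parity as $\config$.

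The main obstacle I anticipate is the last step: proving rigorously that the mutation triangles do not flip rim-parity. The replaced modules $S_j, S_k$ are not themselves on a rim in general, so I cannot directly invoke Lemma \ref{lem-simpleLeaf}; instead I must track the $y$-coordinates of the new sms members through the distinguished triangle and confirm they stay in the admissible coordinate set $\{1,\ldots,e,\,e(m-1)+1,\ldots,em\}$ forced by Riedtmann's insertion construction, on the same side as before. The cleanest route is probably to fix the combinatorial model on the punctured $e$-disc: mutation at an edge adjacent to $v_0$ corresponds to a local rotation of one admissible arc about the puncture, and I would verify that this rotation preserves whether the configuration prunes to $\config_{h,m}^-$ or $\config_{h,m}^+$ by direct inspection of the two local pictures (left and right mutation) around $v_0$, using the explicit $\Omega$-action $(i,j)\mapsto(\overline{i-j+1},\,em+1-j)$ recorded in the proof of Corollary \ref{cor-disjoint-sms-type}. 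Once this local invariance is established, the global statement for all three of $\config$, $\mu_X^+(\config)$, $\mu_X^-(\config)$ follows immediately.
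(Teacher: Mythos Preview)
Your proposal contains a genuine error in the second paragraph: the claim that the valency of the exceptional vertex $v_0$ is preserved under $\mu_i^\pm$ is false. In the Brauer tree mutation of Definition \ref{defn-muBTree}, the new edge $i'$ is attached to the \emph{far} endpoints of the neighbouring edges $j$ and $k$, not to the common vertex. Concretely, if $i$ joins $u$ and $v_0$ and $k$ is the predecessor of $i$ around $v_0$, then on the $v_0$-side the new edge $i'$ is attached to the endpoint of $k$ different from $v_0$; hence $v_0$ loses $i$ and does not regain $i'$, so its valency drops from $h$ to $h-1$. (This is exactly what makes Proposition \ref{thm-2tiltAlgo} work: each mutation at an edge through the exceptional vertex decreases its valency by one.) Consequently your parallel-pruning argument collapses: the three trees do not prune to stars of the same size, and your reduction to ``same $\tau$-orbit parity because same number of pruned edges'' does not get off the ground. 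The detours through Lemma \ref{lem-mutateSimples-multi-free} and the punctured-disc model do not repair this, since the problem is a wrong combinatorial premise rather than a missing computation.

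The paper's argument proceeds differently and uses precisely this valency drop. One prunes $G$ to a star with $h$ edges and $\mu_i^-(G)$ to a star with $h-1$ edges, obtaining configurations $\config_h$ and $\config_{h-1}$. Since mutation replaces at most the modules $S_i,S_j,S_k$ (Lemma \ref{lem-mutateSimples}), among the $h-1$ edges at $v_0$ in the mutated tree only the one corresponding to $k$ can have its module changed; thus $h-2$ members of $\config_{h-1}$ sit on the same rim as their counterparts in $\config_h$. For $h>2$ this single shared rim vertex already pins down whether $\config_{h-1}=\config_{h-1,m}^-$ or $\config_{h-1,m}^+$, forcing the types to agree. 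The case $h=2$ is then handled by a direct check on $B_{2,m}^\star$. You should rebuild your argument along these lines, starting from the correct valency $h-1$.
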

\begin{proof}
We will use the labelling for edges as in Definition \ref{defn-muBTree}
and suppose $X$ corresponds to an edge $i$ attached to the exceptional vertex $v$.  
We show the proof for left mutation; right mutation is done analogously.  
It follows from Lemma \ref{lem-mutateSimples} that the effect of irreducible mutation on the configuration is to replace at most three of the vertices.  
After tree pruning, the effect on the configuration $\config_h$ corresponding to $\sms$ is either $\config_{h,m}^-$ (bottom-type) or $\config_{h,m}^+$ (top-type).  
After the mutation at $X$, all but one module corresponding to the edges attached to $v$ remains unchanged in $\mu_X^-(\sms)$.  So after pruning the mutated tree, we are left with a configuration $\config_{h-1}$ of $\integer A_{(h-1)m}$ such that the $h-2$ vertices in $\config_{h-1}$ lie in the same rim as their corresponding vertices in $\omega_e^{m}(\config_{h-1})\subset\config_h$.  When $h>2$, this forces $\config_{h-1}$ and $\config_h$ to be of the same type, and hence $\sms$ and $\mu_X^-(\sms)$.  For $h=2$, apply tree pruning on $G$ until reaching a Brauer star with 2 edges.  Now consider an irreducible (left or right) mutation of the set of simple $B_{2,m}^\star$-modules $\{(1,1),(2,1)\}$, one will obtain either $\{(2,2m),(1,1)\}$ or $\{(1,2m), (2,1)\}$.  Hence the three sms's are of the same type.
\end{proof}

\section{Proof of Theorem \ref{thm-main}}\label{sec:2tilt-proof}
This entire section is devoted to proving the main theorem \ref{thm-main}.
For convenience, we denote $A$ the Brauer star algebra $B_{e,m}^\star$ with multiplicity $m>1$ throughout this section.  Recall that for any algebra $\Lambda$, we denote by $\sms_\Lambda$ as the set of simple $\Lambda$-modules, and we will always identify sms's with configurations.  

Our plan to prove Theorem \ref{thm-main} is to first show that it holds (i.e. $\frakf$ is a bijection) for the case $A=B_{e,m}^\star$, then extends to $A_n^\ell$ with $\ell\neq\gcd(n,\ell)$.  Afterwards, we show $\frakf$ is surjective non-injective for other cases of $A_n^\ell$.

We will achieve the first goal by showing this:
\begin{theorem}\label{thm-main-btree}
Restricting $\frakf:\twotilt(A)\to\smss(A)$ to the disjoint subsets $\twotilt_\pm(A)$, we have bijections $\frakf_\pm:\twotilt_\pm(A)\to\smss_\pm(A)$.  Moreover, $\frakf_\pm$ preserve mutations, that is, for all $T\in\twotilt_-(A)$ and $T'\in\twotilt_+(A)$, with indecomposable pretilting summand $X$ and $X'$ respectively, such that $\mu_X^-(T)$ and $\mu_{X'}^+(T')$ are two-term tilting complexes, then 
\[
\frakf_-(\mu_X^-(T)) = \mu_{\widetilde{X}}^-(\frakf_-T) \;\;\mbox{ and }\;\; \frakf_+(\mu_{X'}^+(T')) = \mu_{\widetilde{X'}}^+(\frakf_+T')
\]
for some indecomposable $A$-modules $\widetilde{X}$ and $\widetilde{X'}$.
\end{theorem}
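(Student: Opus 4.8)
The plan is to prove the statement in full for the minus side $\frakf_-\colon\twotilt_-(A)\to\smss_-(A)$ and to deduce the plus side by the dual argument, the two being linked by the type-swapping Heller bijection $\Omega\colon\smss_-(A)\to\smss_+(A)$ of Corollary~\ref{cor-disjoint-sms-type} together with the bijections $\twotilt_-(A)\leftrightarrow\mathcal{T}(e)\leftrightarrow\twotilt_+(A)$ of Theorem~\ref{thm-SINakayama-2tilt}. The whole argument is organised around the mutation tree rooted at $A$ furnished by Proposition~\ref{thm-2tiltAlgo}: every $T\in\twotilt_-(A)$ is obtained from $A$ by $h<e$ irreducible left mutations, and by Lemma~\ref{lem-2tilt-trick}(2) together with Corollary~\ref{cor-edge-min-naka-stable-summand-corresp} each of these is a mutation at a degree-$0$ stalk summand, hence at an edge emanating from the exceptional vertex of $G=\psi_-\phi_-^{-1}(T)$.

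I would first settle well-definedness and the mutation identity together. The compatibility of tilting mutation with sms mutation recorded in \cite{CKL} (the surjection on exchange quivers) gives, for any indecomposable summand $X$ with $\mu_X^-(T)$ tilting, an identity $\frakf(\mu_X^-(T))=\mu_S^-(\frakf(T))$ for a unique $S\in\frakf(T)$; putting $\widetilde{X}:=S$ supplies the required indecomposable module, which is identified as $\stinv{F_T}$ of the simple $E_T$-module indexed by the edge $X$. When $\mu_X^-(T)$ is again two-term, $X$ is a degree-$0$ summand, so $S$ corresponds to an exceptional edge and Proposition~\ref{prop-respectTauOrbit} forces $\mu_S^-(\frakf(T))$ to have the same type as $\frakf(T)$. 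Since $\frakf(A)=\sms_A=\config_{e,m}^-$ is of bottom type, an induction along the tree of Proposition~\ref{thm-2tiltAlgo} gives $\frakf(\twotilt_-(A))\subseteq\smss_-(A)$; this proves $\frakf_-$ is well defined and simultaneously records the mutation formula.

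Surjectivity I would obtain by reversing the tree. Given $\sms\in\smss_-(A)$, it is a $B_{e,m}^G$-simple-image of bottom type for a Brauer tree $G$, and the algorithm of \cite{KZ} used in the proof of Proposition~\ref{thm-2tiltAlgo} contracts $G$ to the star by right mutations at exceptional edges; the corresponding exceptional-edge sms mutations preserve bottom type by Proposition~\ref{prop-respectTauOrbit}, and lifting them through the mutation identity (each step remaining two-term by Lemma~\ref{lem-2tilt-trick}) produces a $T\in\twotilt_-(A)$ mapping into the $\psi_-$-fibre of $G$. What this does not yet guarantee is that one lands on the prescribed $\sms$ rather than on some other bottom-type simple-image of $G$, and this is exactly the point where injectivity enters.

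I expect injectivity to be the main obstacle, because $\psi_-$ is not injective: distinct triangulations can give the same algebra $E_T\cong B_{e,m}^G$, so the Brauer tree alone cannot separate their tilting complexes. The resolution is that, although $E_{T_1}\cong E_{T_2}$, the stable equivalences $\overline{F_{T_1}}$ and $\overline{F_{T_2}}$ differ and their simple-images lie in different $\tau$-orbits of $\smss_-(A)$. I would read off these orbits by the Tree Pruning Lemma (Corollary~\ref{cor-cutTree}) and the refined insertion construction: successively cutting the leaves of $G$ reduces $\frakf(T)$ to a standard truncated configuration, and the record of which diagonals are deleted at each stage recovers the $\tau$-orbit, and with it the triangulation $\phi_-^{-1}(T)$. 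The technical heart is to show that this pruning data is a complete invariant of $T$ within each $\psi_-$-fibre and that it coincides with the positional data dictated by the triangulation; once established, $\frakf_-$ is bijective on every fibre of $\psi_-$, hence bijective, which also retroactively pins down the target in the surjectivity step.
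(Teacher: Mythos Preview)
Your treatment of well-definedness and the mutation identity is essentially the paper's: induct along the left-mutation chain of Proposition~\ref{thm-2tiltAlgo}, each step being at a degree-$0$ stalk (hence at an exceptional edge), and invoke Proposition~\ref{prop-respectTauOrbit} to keep the type fixed. This part is fine.

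Where you diverge is in surjectivity and injectivity, and here the paper's route is both different and shorter. The paper does not attempt to reverse the mutation tree, nor does it try to make tree pruning into a complete invariant. Instead it proves a structural lemma about the fibres of $\psi_-$: any two $T,T'\in\twotilt_-(A)$ with $\psi_-\phi_-^{-1}(T)=\psi_-\phi_-^{-1}(T')=G$ differ only by a cyclic relabelling of the indecomposable projectives (this comes directly from the Schaps--Zakay-Illouz pointed Brauer tree construction). Cyclic relabelling acts on $\frakf(T)$ as a power of $\tau$. Surjectivity then follows because (i) Proposition~\ref{prop-SZRS} puts some $T'$ into every $\psi_-$-fibre, (ii) Asashiba's classification of stable auto-equivalences of self-injective Nakayama algebras forces any bottom-type $\sms$ with the same tree $G$ to be $\tau^h\frakf(T')$ for some $h$, and (iii) the relabelling lemma realises every $\tau^h$-translate inside the image. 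Injectivity is the same lemma read backwards: if $\frakf(T)=\frakf(T')$ then $T'$ is a relabelling of $T$ and $\frakf(T)$ is $\tau^h$-stable, which forces the relabelling to permute only the summands, i.e.\ $T=T'$.

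Your proposed injectivity argument has a genuine gap. The Tree Pruning Lemma (Corollary~\ref{cor-cutTree}) only records which of the two truncated configurations $\config_{h,m}^\pm$ one lands on; it does \emph{not} separate distinct bottom-type configurations with the same underlying tree. Your claim that ``the record of which diagonals are deleted at each stage recovers the $\tau$-orbit, and with it the triangulation'' is an additional combinatorial assertion that is neither stated nor proved anywhere in the paper, and it is not obvious: the pruning process depends on the order in which leaves are removed, and you would have to show both invariance of the extracted data and its bijective correspondence with triangulations in a fixed $\psi_-$-fibre. Even if this can be made to work, it duplicates by hand exactly what the relabelling lemma plus Asashiba's theorem give for free. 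I would replace your surjectivity/injectivity steps by that pair of inputs.
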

\begin{proof}
Mutation preserving property is inherited from the composition of mutation preserving maps $\twotilt(A)\to\tilt(A)$ and $\tilt(A)\to\smss(A)$.  This is by combining the following lemmas \ref{lem-frakf-well-def}, \ref{lem-frakf-surj}, \ref{lem-frakf-inj}. 
\end{proof}

\begin{lemma}\label{lem-frakf-well-def}
$\frakf_\pm:\twotilt_\pm(A)\to \smss_\pm(A)$ are well-defined.
\end{lemma}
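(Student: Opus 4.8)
The goal is to show that the map $\frakf_\pm$ sends a two-term tilting complex in $\twotilt_\pm(A)$ to a simple-minded system lying in the correct type-component $\smss_\pm(A)$, so that the claimed codomain is honest. Recall $\frakf(T) = \stinv{F_T}(\sms_{E_T})$, and by Proposition \ref{prop-triangualtion-to-BTree} together with Theorem \ref{thm-adachi} the endomorphism algebra $E_T$ of $T\in\twotilt_-(A)$ is a Brauer tree algebra $B_{e,m}^{G}$ with $G=\psi_-\phi_-^{-1}(T)$. So what must be verified is twofold: first, that $\frakf(T)$ is genuinely an sms (i.e. that $\frakf$ lands in $\smss(A)$ at all), and second, that the type parity matches the sign $\pm$, i.e. $\frakf_-(T)\in\smss_-(A)$ and $\frakf_+(T)\in\smss_+(A)$.

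The plan is to obtain the first point essentially for free and concentrate on the parity. That $\frakf(T)$ is an sms follows from \cite{CKL}: the stable equivalence $\stinv{F_T}:\stmod{E_T}\to\stmod{A}$ carries the simple $E_T$-modules (a simple-minded system of $\stmod{E_T}$) to a simple-minded system of $\stmod{A}$, since simple-minded systems are precisely the images of simple modules under stable equivalences for RFS algebras. For the parity statement I would argue via the mutation sequence supplied by Proposition \ref{thm-2tiltAlgo}: any $T\in\twotilt_-(A)$ is obtained from $A$ itself by $h$ irreducible \emph{left} tilting mutations at summands concentrated in degree $0$, where $h<e$ is the valency of the exceptional vertex of $G$. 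By Corollary \ref{cor-edge-min-naka-stable-summand-corresp} each such summand corresponds to an edge emanating from the exceptional vertex. Starting configuration is $\sms_A$, which is of bottom-type (it is $\config_{e,m}^-$ in the notation of Corollary \ref{cor-cutTree}), and I would track the type through each mutation step.

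The key step, and the main obstacle, is showing that none of these $h$ mutations flips the type parity. This is exactly where Proposition \ref{prop-respectTauOrbit} does the work: it asserts that mutating at (the image of) a simple module corresponding to an edge attached to the exceptional vertex leaves $\sms$, $\mu_X^+(\sms)$, and $\mu_X^-(\sms)$ all of the \emph{same} type, provided the exceptional valency $h>1$. I would apply this inductively along the mutation sequence of Proposition \ref{thm-2tiltAlgo}, checking that at every intermediate stage the mutation is performed at an edge still attached to the exceptional vertex (which is precisely the content of the canonical sequence constructed in the proof of Proposition \ref{thm-2tiltAlgo}, where at each step the exceptional valency changes by one and the mutated edge is exceptional). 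The only subtlety is the base of the valency induction: when a step reduces the exceptional valency to $1$, Proposition \ref{prop-respectTauOrbit} no longer directly applies, so I would handle the terminal $h=1$ situation by the explicit two-edge computation already carried out at the end of the proof of Proposition \ref{prop-respectTauOrbit}, confirming the type is still preserved. Since each step preserves bottom-type and we begin at $\sms_A\in\smss_-(A)$, we conclude $\frakf_-(T)\in\smss_-(A)$; the plus case is dual, starting from $A[1]$ and using right mutations together with Corollary \ref{cor-disjoint-sms-type} to place the outcome in $\smss_+(A)$. This establishes well-definedness of both $\frakf_\pm$.
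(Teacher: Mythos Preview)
Your proposal is correct and follows essentially the same approach as the paper: obtain a mutation sequence at stalk summands via Proposition \ref{thm-2tiltAlgo}, identify these with edges at the exceptional vertex via Corollary \ref{cor-edge-min-naka-stable-summand-corresp}, and invoke Proposition \ref{prop-respectTauOrbit} at each step to preserve the type, starting from $\sms_A\in\smss_-(A)$. Two small remarks: your worry about the exceptional valency dropping to $1$ is unnecessary, since Proposition \ref{prop-respectTauOrbit} only requires valency $>1$ \emph{before} the mutation, which always holds along the sequence (it has length $h<e$ and ends at valency $e-h\geq 1$, not $h$ as you wrote); and for the plus case the paper does not argue by straight duality from $A[1]$ but instead shifts to $U=T[-1]$, shows $\stinv{F_U}(\sms_{E_U})$ is bottom-type by the same mutation argument applied with right mutations from $A$, and then computes $\frakf(T)=\Omega^{-1}\stinv{F_U}(\sms_{E_U})$ to land in $\smss_+(A)$ via Corollary \ref{cor-disjoint-sms-type}.
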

\begin{proof}
For each $T\in\twotilt_-(A)$, by Proposition \ref{thm-2tiltAlgo}, $T$ can be obtained by iterative irreducible left mutation with respect to stalk complexes starting from $A$.  Since stalk complexes correspond to edges attached to the exceptional vertex in $G$ where $E_T\isom B_{e,m}^G$, and mutation is preserved when restricting a standard derived equivalence to stable equivalence, we can repeatedly apply Proposition \ref{prop-respectTauOrbit} and $\sms=\frakf(T)$ has the same type as $\frakf(A)=\sms_A\in\smss_-(A)$, so $\sms\in\smss_-(A)$.

If $T\in\twotilt_+(A)$, then $T$ can be obtained by iterative irreducible right mutation with respect to stalk complexes (which are concentrated in degree $-1$) starting from $A[1]$.  So $U=T[-1]$ can be obtained by iterative irreducible right mutation with respect to stalk complexes concentrated in degree 0.  Again, apply Proposition \ref{prop-respectTauOrbit} repeadtly, then $\sms = \stinv{F_U}(\sms_{E_U})$ has the same type as $\frakf(A)$.  Since $E_U\isom B_{e,m}^G\isom E_T$ for some Brauer tree $G$, so $\sms_{E_U}=\sms_{E_T}$.  So we have
\begin{eqnarray*}
\frakf(T) &=& \stinv{F_T}(\sms_{E_T}) \\
 & = & \stinv{F_{U[1]}}(\sms_{E_U})\\
 & = & \stinv{F_U\circ [-1]}(\sms_{E_U})\\
 & = & \Omega^{-1}\circ\stinv{F_U}(\sms_{E_U})\\
 & = & \Omega^{-1}\sms
\end{eqnarray*}
Note that the third equality follows from the fact that standard derived equivalence $F_T=F_{U[1]}$ is naturally isomorphic to the composition $F_U\circ [-1]$, and the fourth equality follows from the fact that the quotient functor $D^b(\rmod{A})\to\stmod{A}$ is triangulated and so the inverse suspension functor $[-1]$ restricts to $\Omega$.  As $\sms$ is of bottom-type, using Corollary \ref{cor-disjoint-sms-type}, $\frakf(T)=\Omega^{-1}(\sms)$ is of top-type.
\end{proof}

Recall the following result implicit from \cite{SZ,RS}.
\begin{prop}\label{prop-SZRS}
For each Brauer tree $G$ with $e$ edges and multiplicity $m>1$, there is a pair of two-term tilting complexes $T_\pm\in\twotilt_\pm(A)$ such that $E_{T_\pm}\isom B_{e,m}^G$.  In particular, $\psi_\pm\phi_\pm^{-1}:\twotilt_\pm(A)\to \BrTree(e,m)$ are sujective.
\end{prop}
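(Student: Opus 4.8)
The plan is to reduce the statement to the surjectivity of the purely combinatorial maps $\psi_\pm$. Since $\phi_\pm:\mathcal{T}(e)\to\twotilt_\pm(A)$ are bijections (Theorem~\ref{thm-adachi} together with Theorem~\ref{thm-SINakayama-2tilt}), and since Proposition~\ref{prop-triangualtion-to-BTree} identifies $E_{\phi_\pm(X)}\isom B_{e,m}^{G_X^\pm}=B_{e,m}^{\psi_\pm(X)}$ for every triangulation $X\in\mathcal{T}(e)$, the existence of a complex $T_\pm\in\twotilt_\pm(A)$ with $E_{T_\pm}\isom B_{e,m}^G$ is equivalent to $G$ lying in the image of $\psi_\pm$. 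Thus it suffices to show that each $\psi_\pm:\mathcal{T}(e)\to\BrTree(e,m)$ is surjective, and then take $T_\pm:=\phi_\pm(\psi_\pm^{-1}(G))$ for any chosen preimage.

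For the minus case I would argue by mutation, anchoring at the trivial tilting complex. The algebra $A=B_{e,m}^\star$, regarded as the stalk complex in degree $0$, lies in $\twotilt_-(A)$ and corresponds under $\psi_-\phi_-^{-1}$ to the Brauer star $\star$. Given an arbitrary Brauer tree $G$ with $e$ edges and multiplicity $m$, the algorithm in the proof of \cite[3.1]{KZ} produces a sequence of right Brauer-tree mutations, each performed at an edge incident to the exceptional vertex and each raising its valency by one, carrying $G$ to $\star$. Reversing this sequence yields left mutations $\star=G_0,G_1,\dots,G_h=G$, again each at an edge attached to the exceptional vertex, now lowering its valency by one at every step. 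By Corollary~\ref{cor-edge-min-naka-stable-summand-corresp} such edges correspond precisely to the minimal Nakayama-stable summands concentrated in degree $0$, so, transporting the Brauer-tree mutations to tilting mutations via the compatibility of the two mutation theories, I would lift the chain $G_0,\dots,G_h$ to a chain $A=T_0,T_1,\dots,T_h=T_-$ of two-term tilting complexes. Lemma~\ref{lem-2tilt-trick}(2) guarantees that each $T_{i+1}=\mu_X^-(T_i)$, obtained by a left mutation at a degree-$0$ stalk, is again concentrated in degrees $-1$ and $0$, hence stays in $\twotilt_-(A)$; and $E_{T_-}\isom B_{e,m}^{G_h}=B_{e,m}^G$ as required. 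The plus case is entirely dual: one starts from $A[1]\in\twotilt_+(A)$, applies the dual algorithm to obtain right tilting mutations at degree-$(-1)$ stalks, and invokes the plus-version of Lemma~\ref{lem-2tilt-trick}(2).

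The step I expect to be most delicate is the bookkeeping that each reverse-KZ mutation remains anchored at the exceptional vertex. The KZ algorithm increases the exceptional valency by right-mutating at an exceptional edge, producing a new exceptional edge; what must be checked is that the inverse operation is exactly a left mutation at that new exceptional edge, so that it again corresponds to a degree-$0$ stalk. Granting this, Lemma~\ref{lem-2tilt-trick}(2) applies uniformly along the chain and the whole sequence stays inside $\twotilt_-(A)$. Alternatively, one could bypass mutations and build a triangulation $X$ with $\psi_-(X)=G$ directly from the planar embedding of $G$ by reversing the recipe of Proposition~\ref{prop-triangualtion-to-BTree}, but verifying that the resulting arcs are pairwise compatible and maximal is precisely the Schaps--Zakay-Illouz correspondence \cite{SZ,RS}, which the mutation argument lets us sidestep.
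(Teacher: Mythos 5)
Your proof is correct in outline, but it takes a genuinely different route from the paper's. The paper settles the proposition by direct citation: by \cite[Example 1]{RS} every Brauer tree $G$ admits a \emph{completely folded pointing} (and a dual one), and the main theorem of \cite{SZ} converts such a pointed tree into a two-term tilting complex over $B_{e,m}^\star$ with endomorphism ring $B_{e,m}^G$; one sign of the statement is obtained with a shift, the other without. In other words, the ``alternative'' you mention and set aside at the end --- building the object directly via the Schaps--Zakay-Illouz correspondence --- \emph{is} the paper's proof. Your argument instead stays inside the paper's own mutation machinery: anchor at the stalk complex $A\in\twotilt_-(A)$, reverse the Koenig--Zimmermann algorithm to get a chain $\star=G_0,\dots,G_h=G$ of left Brauer-tree mutations, each at an edge attached to the exceptional vertex, and lift this chain to tilting mutations using Corollary~\ref{cor-edge-min-naka-stable-summand-corresp} and Lemma~\ref{lem-2tilt-trick}(2). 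This is exactly the toolkit of Proposition~\ref{thm-2tiltAlgo}, run in the existence direction rather than the reachability direction, so no circularity arises; what it buys is a proof internal to the paper (no pointed trees needed) that moreover exhibits $T_\pm$ as endpoints of chains of irreducible two-term mutations. What the paper's citation buys is brevity and an explicit closed-form model of $T_\pm$.

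Two details in your sketch need repair, though neither is fatal. First, your description of the forward KZ algorithm is inverted: the right mutations that raise the exceptional valency are performed at edges \emph{not} incident to the exceptional vertex --- it is the newly created edge that attaches to it. What is true, and what you actually use, is that the \emph{reversed} chain consists of left mutations at exceptional edges; this is precisely the inverse relationship you flag as the delicate point, and it does hold (a left mutation at an edge incident to the exceptional vertex always detaches it, and $\mu^+$, $\mu^-$ are mutually inverse by \cite[2.33]{AI}). Second, Lemma~\ref{lem-2tilt-trick}(2) only gives that each $T_{i+1}$ is two-term; to re-apply Corollary~\ref{cor-edge-min-naka-stable-summand-corresp} at the next step, and to conclude $T_-\in\twotilt_-(A)$ rather than $\twotilt_+(A)$, you also need its stalk summands to remain in degree $0$. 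This is patchable: each intermediate tree $G_i$ with $i<h$ has exceptional valency $e-i\geq 2$, so at least one degree-$0$ stalk summand of $T_i$ survives the mutation; and no silting complex over $B_{e,m}^\star$ can contain stalk summands in both degrees $0$ and $-1$, since $\Hom_A(P_b,P_a)\neq 0$ for all $a,b$ would give a nonzero element of $\Hom(P_b[1],P_a[1])\subseteq\Hom(T_{i+1},T_{i+1}[1])$. Hence by Theorem~\ref{thm-adachi}(3) each $T_{i+1}$ lies in $\twosilt_-(A)$, and being a mutation of a tilting complex over a symmetric algebra, in $\twotilt_-(A)$.
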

\begin{proof}
As remarked in \cite[Example 1]{RS}, one can put extra combinatorial data on $G$ resulting in so called Brauer tree with completely folded pointing, then using main theorem of \cite{SZ} to construct a two-term tilting complex concentrated in degree 0 and 1 with stalk summands concentrated in degree 0.  So we can just shift this complex to obtain $T_+\in\twotilt(A)$.  To get $T_-$ one uses the dual pointing of completely folded pointing (see \cite[Example1]{RS}) and apply Schaps-Zakay-Illouz correspondence, but without shifting this time.
\end{proof}

\begin{lemma}\label{lem-permute-label}
Suppose $T_\pm\in\twotilt_\pm(A)$ with $G_\pm=\psi_\pm\phi_\pm^{-1}(T)$.  Every $T'_\pm\in\twotilt_\pm(A)$ with $\psi_\pm\phi_\pm^{-1}(T')=G_\pm$ is obtained by cyclically permuting the label of projective indecomposable modules in the components of $T_\pm$.  
\end{lemma}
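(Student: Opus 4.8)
The plan is to transport the statement to the combinatorics of triangulations, where ``cyclically permuting the labels of the projective summands'' becomes rotation of the punctured disc. Let $\sigma$ be the algebra automorphism of $A=B_{e,m}^\star$ rotating the cyclic quiver by one vertex, $P_i\mapsto P_{\overline{i+1}}$, and let $\sigma_*$ be the induced standard auto-equivalence of $D^b(\rmod A)$. Since $\sigma_*$ sends $(P_{\overline{j-1}}\to P_i)$ to $(P_{\overline{j}}\to P_{\overline{i+1}})$ it preserves degree concentration, so it restricts to a bijection $\twotilt_-(A)\to\twotilt_-(A)$, and under $\phi_-$ of Theorem \ref{thm-SINakayama-2tilt} it is exactly the rotation $\rho$ of $\mathcal{G}_e$ given by $\langle j,i\rangle\mapsto\langle\overline{j+1},\overline{i+1}\rangle$ and $\langle\bullet,i\rangle\mapsto\langle\bullet,\overline{i+1}\rangle$ (one checks $\sigma_*\phi_-=\phi_-\rho$ directly on both kinds of arc). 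As ``cyclically permuting labels'' is precisely applying a power of $\sigma_*$, and as $\psi_-\phi_-^{-1}$ factors through $\End_{K^b(\proj A)}(-)$, which is unchanged up to isomorphism by the auto-equivalence $\sigma_*$, it suffices to prove that the fibre of $\psi_-$ over any $G\in\BrTree(e,m)$ is a single $\rho$-orbit. The statement for $\twotilt_+$ then follows by the dual argument with $\phi_+,\psi_+$ and clockwise rotation.

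For the easy inclusion, that each $\rho$-orbit lies in one fibre, I would read the required Brauer-tree isomorphism straight off Proposition \ref{prop-triangualtion-to-BTree}. Relabelling the non-exceptional vertices by $v_a\mapsto v_{\overline{a+1}}$ while fixing the exceptional $v_0$ sends the edge $v_0v_i$ (from $\langle\bullet,i\rangle$) to $v_0v_{\overline{i+1}}$ and the edge $v_iv_{\overline{j-1}}$ (from $\langle j,i\rangle\in A_i^-(X)$) to $v_{\overline{i+1}}v_{\overline{j}}$, which are exactly the edges prescribed for $\rho(X)$. Because the counter-clockwise ordering at every vertex is induced by the natural cyclic order on $\{1,\dots,e\}$, this relabelling is a cyclic shift that preserves all cyclic orderings and fixes $v_0$, hence is an isomorphism $G_X^-\isomto G_{\rho(X)}^-$; iterating gives $G_X^-\cong G_{\rho^k(X)}^-$ for every $k$.

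The substance of the lemma is the reverse inclusion: if $\Phi\colon G_X^-\isomto G_{X'}^-$ is an isomorphism of Brauer trees, then $X'=\rho^k(X)$ for some $k$. Since $m>1$ the exceptional vertex is unique, so $\Phi(v_0)=v_0$, and $\Phi$ then carries the cyclically ordered set of edges at $v_0$ in $G_X^-$ onto that in $G_{X'}^-$, forcing $\Phi$ to act on the neighbours of $v_0$ by a fixed cyclic shift $k$. The key point, where the real work lies, is to propagate this outward and deduce that $\Phi$ is globally the rotation $v_a\mapsto v_{\overline{a+k}}$: equivalently, that the ribbon (cyclic-ordering) structure of the Brauer tree, together with the convention that the non-exceptional vertices lie on the boundary of $\mathcal{G}_e$ in the order $1,\dots,e$, pins down the embedded tree — and hence, $X\mapsto G_X^-$ being injective as an \emph{embedded} tree, the triangulation $X$ itself — uniquely up to rotation. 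I would establish this by recovering $X$ from $G_X^-$ via the Schaps--Zakay--Illouz walk around the tree already used in Proposition \ref{prop-SZRS}: following the ribbon structure from the starting edge that $\Phi$ designates lists the boundary vertices in their cyclic order, so the only ambiguity in the reconstruction is the choice of starting edge, namely the shift $k$. The main obstacle is exactly this rigidity step — showing that no orientation-preserving automorphism of the ribbon tree other than a rotation fixes $v_0$ and respects the boundary ordering — and care is needed because an inner arc between two boundary vertices admits two simple homotopy classes in the punctured disc (the puncture lying on either side), so one must check that the walk recovers not merely the endpoints but the correct counter-clockwise, puncture-avoiding class of each arc, i.e. the orientation $\langle j,i\rangle$ rather than $\langle i,j\rangle$.
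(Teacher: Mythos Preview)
Your approach is correct and, at its core, the same as the paper's: both rest on the Schaps--Zakay-Illouz correspondence. The paper's proof is much terser --- it simply points to \cite{SZ}, observing that a two-term tilting complex $T$ with $E_T\cong B_{e,m}^G$ corresponds to the (unique) completely folded pointing on $G$ together with a choice of non-exceptional vertex, and that changing this choice amounts exactly to cyclically relabelling the indecomposable projectives. Your argument unpacks the same content through the triangulation picture: you translate cyclic relabelling into rotation $\rho$ of $\mathcal{G}_e$, reduce to showing that the fibres of $\psi_-$ are $\rho$-orbits, and then invoke the SZ walk to reconstruct $X$ from $G_X^-$ up to the choice of starting edge. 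This is a more explicit and self-contained presentation of the same mechanism; the ``rigidity step'' you flag (that an isomorphism of ribbon trees fixing $v_0$ must be the global shift $v_a\mapsto v_{\overline{a+k}}$) is precisely what the paper absorbs into its citation of \cite{SZ}, and the subtlety you note about the two homotopy classes of an inner arc is handled there by the orientation built into the pointing.
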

\begin{proof}
The statement follows by observeing closely the construction and proof of main theorem of \cite{SZ}.  To be slightly more precisely, two-term tilting complex $T$ with $E_T\isom B_{e,m}^G$ corresponds to a completely folded pointing (or its dual) Brauer tree with a choice of non-exceptional vertex.  Changing this choice corresponds to cyclically permuting the label of projective indecomposable modules in the components of $T$.
\end{proof}
\begin{remark}
It follows that $\psi_\pm$ induces two different bijections between the set of triangulations of a punctured $e$-disc \emph{up to rotations} and the set Brauer trees with an exceptional vertex.
\end{remark}

\begin{lemma}\label{lem-frakf-surj}
If $\sms\in\smss(A)$ is in the image of $\frakf_-$ (or $\frakf_+$), then $\tau^n\sms$ for any $n\in\integer$ is also in the image of $\frakf_-$ (resp. $\frakf_+$).  In particular, $\frakf_\pm$ are surjective.
\end{lemma}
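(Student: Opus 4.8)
The plan is to prove the two claims in order: first the $\tau$-orbit invariance of the image, and then deduce surjectivity from it. For the first claim, I would argue that applying $\tau^n$ to an sms $\sms = \frakf_-(T)$ can be realized by modifying the two-term tilting complex $T$ in a way that keeps it inside $\twotilt_-(A)$ and only relabels the projective summands. The key input is Lemma \ref{lem-permute-label}: cyclically permuting the labels of the indecomposable projectives appearing in the components of $T$ yields another $T' \in \twotilt_-(A)$ with the \emph{same} Brauer tree $G = \psi_-\phi_-^{-1}(T) = \psi_-\phi_-^{-1}(T')$, hence $E_{T'} \isom B_{e,m}^G \isom E_T$. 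The plan is to show that this relabelling operation, at the level of the restricted stable equivalence $\stinv{F}$, corresponds precisely to applying a power of $\tau$ to the resulting sms.

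To make that correspondence precise, I would use the fact that $\tau \isom \nu_A \Omega^2$ on $\stmod A$, and that $\nu_A$ sends $M_{i,l}$ to $M_{\overline{i+e},l}$, i.e. it acts on the stable AR-quiver as a cyclic rotation in the $x$-coordinate. Since the cyclic permutation of projective labels in $T$ is exactly the combinatorial shadow of the Nakayama functor (recall that applying $\nu_A$ to a silting complex rotates the punctured disc, by the discussion preceding Theorem \ref{thm-SINakayama-2tilt}), relabelling $T$ by one step changes $\frakf_-(T)$ by $\tau^e$ (or an appropriate power), and all $\tau^n\sms$ in the same orbit are obtained this way. Concretely, I would track the identity $\frakf_-(T') = \stinv{F_{T'}}(\sms_{E_{T'}})$ and compare it with $\stinv{F_T}(\sms_{E_T})$ through the relabelling isomorphism, verifying that the discrepancy is exactly the rotation $\tau^n$ on configurations. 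The main obstacle I anticipate is pinning down this last comparison rigorously: one must check that the standard derived equivalences $F_T$ and $F_{T'}$ differ (after restriction to stable categories and precomposition with the label-permuting automorphism of $A$) by the Nakayama-induced rotation, rather than merely asserting it from the combinatorics. This requires care because the simple $E_{T'}$-modules $\sms_{E_{T'}}$ must be matched correctly with $\sms_{E_T}$ under the algebra isomorphism $E_{T'} \isom E_T$.

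Granting the orbit-invariance, surjectivity of $\frakf_-$ follows quickly. By Corollary \ref{cor-cutTree} (Tree Pruning) and the definition of bottom-type, every $\sms \in \smss_-(A)$ prunes to some $\config_{h,m}^-$, which records its $\tau$-orbit. By Proposition \ref{prop-SZRS}, for the Brauer tree $G$ underlying $\sms$ there exists $T \in \twotilt_-(A)$ with $E_T \isom B_{e,m}^G$; by Lemma \ref{lem-frakf-well-def} its image $\frakf_-(T)$ is again of bottom-type, hence a $B_{e,m}^G$-simple-image lying in \emph{some} $\tau$-orbit. Since $\sms$ and $\frakf_-(T)$ share the Brauer tree $G$, they are translates of one another under $\tau$, so $\sms = \tau^n \frakf_-(T)$ for some $n$. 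The orbit-invariance just established then produces a two-term tilting complex whose image is $\sms$, giving surjectivity of $\frakf_-$; the argument for $\frakf_+$ is identical after replacing bottom-type by top-type and $\smss_-$ by $\smss_+$.
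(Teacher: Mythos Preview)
Your overall strategy matches the paper's: first establish $\tau$-orbit invariance of the image via Lemma~\ref{lem-permute-label}, then for surjectivity pick $T\in\twotilt_-(A)$ with $E_T\cong B_{e,m}^G$ (Proposition~\ref{prop-SZRS}) and argue that $\sms$ and $\frakf_-(T)$ lie in the same $\tau$-orbit. For the first part you and the paper argue identically, and your worry about making the label-permutation versus $\tau$-shift comparison precise is legitimate but no worse than the paper's own level of detail there.

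The gap is in the surjectivity step. You assert: ``Since $\sms$ and $\frakf_-(T)$ share the Brauer tree $G$, they are translates of one another under $\tau$.'' This does not follow from anything you have cited. Corollary~\ref{cor-cutTree} only tells you that the pruned type ($\config_{h,m}^-$ versus $\config_{h,m}^+$) is constant along a $\tau$-orbit; it does \emph{not} tell you that two bottom-type $B_{e,m}^G$-simple-images must lie in the same $\tau$-orbit. A priori there could be several distinct $\tau$-orbits of bottom-type sms's all arising as simple-images from the same Brauer tree $G$, and your $\frakf_-(T)$ might sit in a different one from $\sms$.

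The paper closes this gap with an external input you omit: Asashiba's theorem \cite{Asa2} that every stable auto-equivalence of a self-injective Nakayama algebra is generated by the Picard group and $\Omega$. Since $\sms$ and $\sms'=\frakf_-(T)$ are both $B_{e,m}^G$-simple-images, they differ by a stable auto-equivalence of $A$, hence by some $\Omega^h$ (the Picard part being absorbed into powers of $\tau=\Omega^2$ for the Brauer star). Then Lemma~\ref{lem-frakf-well-def} forces both to be bottom-type, and since $\Omega$ swaps types (Corollary~\ref{cor-disjoint-sms-type}) the exponent $h$ must be even, giving $\sms'=\tau^{h/2}\sms$. Without this control on stable auto-equivalences your argument is incomplete.
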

\begin{proof}
Again, we prove only for $\sms\in\smss_-(A)$, the other case is analogous.  The first statement follows from previous lemma \ref{lem-permute-label}, as changing labelling of projective modules in $T$ corresponds to changing the $x$-coordinate of the configuration corresponding to $\frakf(T)$.  

Since every sms of $A$ is simple-image, we have $\sms=\phi(\sms_{B_{e,m}^G})$ for some Brauer tree $G$ and stable equivalence $F:\stmod{B_{e,m}^G}\to\stmod{A}$.  By Proposition \ref{prop-SZRS}, we can find a $T'\in\twotilt_-(A)$ with $E_{T'}\isom B_{e,m}^G$.  
Let $\sms'=\frakf(T')$, so $\sms'$ can be obtained from $\sms$ by a stable auto-equivalence.  Recall from \cite{Asa2} that any stable auto-equivalence of self-injective Nakayama algebra is generated by the its Picard group and $\Omega$, hence $\sms'=\Omega^h\sms$ for some $h$.  Since $\sms'$ and $\sms$ are of the same type by Lemma \ref{lem-frakf-well-def}, $\sms'=\tau^h\sms$.  Surjectivity of $\frakf_-$ now follows.
\end{proof}

\begin{lemma}\label{lem-frakf-inj}
$\frakf_\pm:\twotilt_\pm(A)\to\smss_\pm(A)$ are injective.
\end{lemma}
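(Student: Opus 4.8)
The plan is to prove injectivity of $\frakf_\pm:\twotilt_\pm(A)\to\smss_\pm(A)$ by exploiting the combinatorial bijections already established, reducing the problem to a counting or cardinality argument combined with the surjectivity and well-definedness already in hand. Since I will only treat $\frakf_-$ (the plus case being analogous, and in fact reducible to the minus case via the shift relation $\frakf(T)=\Omega^{-1}\sms$ computed in Lemma \ref{lem-frakf-well-def}), the core task is to show that two distinct two-term tilting complexes in $\twotilt_-(A)$ cannot map to the same sms. The natural strategy is to show that $\frakf_-$ is a surjection between two finite sets of the \emph{same cardinality}; combined with surjectivity from Lemma \ref{lem-frakf-surj}, this forces bijectivity and hence injectivity.

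First I would count $\smss_-(A)$. By Corollary \ref{cor-disjoint-sms-type}, every sms of $A=B_{e,m}^\star$ lies in exactly one of $\smss_-(A)$ or $\smss_+(A)$, and $\Omega$ gives a bijection between these two sets, so $|\smss_-(A)| = \tfrac{1}{2}|\smss(A)|$. By the identification of $\smss(A)$ with configurations of $\integer A_{em}/\langle\tau^e\rangle$, and the classical count of configurations (equivalently Brauer trees with $e$ edges, each counted with the number of choices of non-exceptional starting vertex / $\tau$-orbit positions), I would express $|\smss_-(A)|$ in terms of the number of Brauer trees in $\BrTree(e,m)$ together with the orbit data. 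Second, I would count $\twotilt_-(A)$ using Theorem \ref{thm-SINakayama-2tilt}, which gives a bijection $\twotilt_-(A)\leftrightarrow\mathcal{T}(e)$, so $|\twotilt_-(A)|=|\mathcal{T}(e)|$. The key bridge is Proposition \ref{prop-triangualtion-to-BTree} together with the remark following Lemma \ref{lem-permute-label}: the map $\psi_-$ induces a bijection between triangulations of the punctured $e$-disc \emph{up to rotation} and Brauer trees, so the fibre of $\psi_-\phi_-^{-1}$ over a fixed $G$ has size equal to the number of rotational positions, matching exactly the number of $\tau$-orbit representatives appearing on the sms side.

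The heart of the argument is to match these fibres cleanly. By Lemma \ref{lem-permute-label}, the complexes $T'_-\in\twotilt_-(A)$ over a fixed Brauer tree $G$ are obtained from one another by cyclically permuting the labels of the projective indecomposables, and by Lemma \ref{lem-frakf-surj} this relabelling corresponds precisely to shifting the $x$-coordinate of the associated configuration, i.e. to applying powers of $\tau$. Thus $\frakf_-$ restricted to a single fibre $\{T : \psi_-\phi_-^{-1}(T)=G\}$ lands in a single $\tau$-orbit family of configurations of bottom-type, and I must check this restriction is injective: distinct cyclic relabellings of the projectives yield configurations differing by a genuine $\tau^h$ with $h\not\equiv 0$, hence distinct sms's. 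This local injectivity, together with the fact that distinct Brauer trees $G\neq G'$ give configurations with distinct tree-pruning data (the shape of the Brauer tree is recoverable from the configuration via the insertion/cutting construction of Corollary \ref{cor-cutTree} and Proposition \ref{prop-triangualtion-to-BTree}), yields global injectivity.

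The main obstacle I anticipate is verifying that the two potential sources of collision are genuinely independent: collisions \emph{within} a fibre over a fixed $G$ (which I control by the $\tau$-shift correspondence) and collisions \emph{across} different trees $G\neq G'$ (which I control by recovering $G$ from the pruned configuration). The delicate point is ensuring the $x$-coordinate shift induced by relabelling is injective modulo the rotational symmetry built into the $e$-disc, so that the fibre sizes on both sides match exactly without over- or under-counting; here I would lean on the precise statement of the remark after Lemma \ref{lem-permute-label} that $\psi_-$ descends to a bijection on triangulations up to rotation. Once fibre sizes match and both within-fibre and cross-fibre injectivity are established, injectivity of $\frakf_-$ follows, and the plus case follows by applying $\Omega^{-1}$ as in Lemma \ref{lem-frakf-well-def}.
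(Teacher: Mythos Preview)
Your fibre-wise strategy is close in spirit to the paper's argument, but there is a genuine gap in the within-fibre step. You assert that ``distinct cyclic relabellings of the projectives yield configurations differing by a genuine $\tau^h$ with $h\not\equiv 0$, hence distinct sms's.'' This inference fails precisely when the configuration $\sms$ is $\tau^h$-stable for some $0<h<e$, which does occur (for instance when the Brauer tree $G$ has rotational symmetry about its exceptional vertex). In that situation two genuinely different cyclic relabellings of the projectives produce the \emph{same} sms, and your argument as written does not exclude this. The paper's proof turns this obstruction into the solution: suppose $\frakf_-(T)=\frakf_-(T')=\sms$; then $E_T\cong E_{T'}$, so $T,T'$ lie over the same $G$ and differ by a cyclic relabelling (Lemma~\ref{lem-permute-label}), which on the sms side amounts to $\tau^h$ for some $h$. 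Thus $\sms=\tau^h\sms$. The missing observation is that this $\tau^h$-stability of $\sms$ forces the corresponding label permutation merely to permute the indecomposable summands of $T'$ among themselves, so $T=T'$. In other words, any extra symmetry of the configuration is matched by the same symmetry of the tilting complex; no cardinality matching is needed.

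Your alternative counting route also has a difficulty: an independent enumeration of $|\smss_-(A)|$ is not available in the paper --- the identity $|\smss(A_e^{em})|=\binom{2e}{e}$ used later in Theorem~\ref{thm-main-naka0} is deduced \emph{from} the bijectivity of $\frakf$, so invoking it here would be circular. You could in principle rescue the counting argument by an orbit--stabiliser comparison (showing directly that the rotational stabiliser of a triangulation under $\psi_-$ matches the $\tau$-stabiliser of the corresponding configuration), but that is exactly the content of the direct argument above, just phrased more laboriously.
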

\begin{proof}
We prove only for the minus version.  Suppose $T,T'\in\twotilt_-(A)$ with $\sms=\frakf(T)=\frakf(T')=\sms'$.  This implies $E_T\isom E_{T'}$, or equivalently $\psi_-\phi_-^{-1}(T)=G=\psi_-\phi_-^{-1}(T')$.  By Lemma \ref{lem-permute-label}, we have $T$ is given by permuting labels of components of $T'$.  As in previous Lemma \ref{lem-frakf-surj}, this implies $\sms=\tau^h\sms'$ for some $h$.  Hence $\sms'$ is $\tau^h$-stable.  But this means that the corresponding permutation on the labels on projective modules in $T'$ only permutes the summands of $T'$.  Hence $T=T'$.
\end{proof}

We now extends Theorem \ref{thm-main-btree} to some of the self-injective Nakayama algebras.
\begin{theorem}
For fixed $n,\ell\in\integer_{>0}$ with $\ell\neq e:=\gcd(n,\ell)$, restricting $\frakf:\twotilt(A_n^\ell)\to\smss(A_n^\ell)$ to the disjoint subsets $\twotilt_\pm(A_n^\ell)$, we have bijections $\frakf_\pm:\twotilt_\pm(A_n^\ell)\to\smss_\pm(A_n^\ell)$.  Moreover, $\frakf_\pm$ are mutation preserving.
\end{theorem}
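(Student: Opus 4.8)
The plan is to reduce everything to the Brauer star case already settled in Theorem~\ref{thm-main-btree} by means of covering theory. Since $e=\gcd(n,\ell)$ divides $\ell$, writing $m=\ell/e$ the hypothesis $\ell\neq e$ is exactly $m>1$, so $A_e^\ell=A_e^{em}=B_{e,m}^\star$ is a Brauer star algebra of nontrivial multiplicity and Theorem~\ref{thm-main-btree} applies to it verbatim. I would assemble a commutative square whose bottom edge is the bijection of Theorem~\ref{thm-main-btree} and whose vertical edges are ``covering'' bijections, and deduce that the top edge $\frakf_\pm$ for $A_n^\ell$ is a bijection.

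First I would set up the two vertical covering bijections. On the tilting side, Theorem~\ref{thm-SINakayama-2tilt} already supplies mutation-preserving bijections $\pi^\pm_{\twotilt}\colon\twotilt_\pm(A_n^\ell)\to\twotilt_\pm(A_e^\ell)$. On the sms side, I would use the identification of $\smss(A_n^\ell)$ with the configurations of $\integer A_\ell/\langle\tau^n\rangle$ together with the correspondence of these with the configurations of $\integer A_\ell/\langle\tau^e\rangle={}_s\Gamma_{A_e^\ell}$ recorded in Section~\ref{sec:prelim}, giving a bijection $\pi_{\smss}\colon\smss(A_n^\ell)\to\smss(A_e^\ell)$. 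Because this map only rescales the $x$-coordinate (reducing mod $e$ rather than mod $n$) and leaves the $y$-coordinate fixed, it preserves the rim on which each configuration-vertex sits, hence preserves the bottom/top type; so it restricts to $\pi^\pm_{\smss}\colon\smss_\pm(A_n^\ell)\to\smss_\pm(A_e^\ell)$, and by Remark~\ref{rmk-covering-theory} it preserves mutations.

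The crux is the commutativity of
\[
\begin{CD}
\twotilt_\pm(A_n^\ell) @>{\frakf_\pm}>> \smss_\pm(A_n^\ell) \\
@V{\pi^\pm_{\twotilt}}VV @VV{\pi^\pm_{\smss}}V \\
\twotilt_\pm(A_e^\ell) @>{\frakf_\pm}>> \smss_\pm(A_e^\ell)
\end{CD}
\]
Granting this, the two verticals and the lower horizontal are bijections (the last by Theorem~\ref{thm-main-btree}), so the upper $\frakf_\pm$ is forced to be a bijection; and since all three of the other maps preserve mutations (by Theorem~\ref{thm-SINakayama-2tilt}, Remark~\ref{rmk-covering-theory}, and Theorem~\ref{thm-main-btree} respectively), so does $\frakf_\pm$. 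In particular, well-definedness of $\frakf_\pm$ into $\smss_\pm$ for $A_n^\ell$ is itself part of the commutativity, inherited from the star case through the type-preserving $\pi^\pm_{\smss}$.

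The main obstacle is precisely this commutativity, i.e. the compatibility of the defining data of $\frakf$ with the covering. For $T\in\twotilt_\pm(A_n^\ell)$ with image $\bar T=\pi^\pm_{\twotilt}(T)$ I must show $\pi_{\smss}\bigl(\stinv{F_T}(\sms_{E_T})\bigr)=\stinv{F_{\bar T}}(\sms_{E_{\bar T}})$. This splits into three checks: (i) that $E_T$ is the $(n/e)$-fold cover of $E_{\bar T}=B_{e,m}^{G}$, where $G=\psi_\pm\phi_\pm^{-1}(\bar T)$, so that the simple $E_T$-modules push down to the simple $E_{\bar T}$-modules; (ii) that the standard derived equivalence $F_T$ is intertwined with $F_{\bar T}$ by the push-down and pull-up functors of the covering, where the functoriality of covering theory for derived categories as in \cite{Asa} enters, and which reduces to the statement that $T$ and $\bar T$ are images of one another under the push-down functor --- exactly the content of Theorem~\ref{thm-SINakayama-2tilt} read through \cite{Asa}; and (iii) that passing to stable categories and applying $\stinv{(\cdot)}$ commutes with the covering, so the two simple-images correspond under $\pi_{\smss}$. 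Step (ii) is the technical heart. I expect the only delicate bookkeeping to be the $x$-coordinate (rotation/Picard) ambiguity, controlled exactly as in Lemmas~\ref{lem-frakf-surj} and \ref{lem-frakf-inj}.
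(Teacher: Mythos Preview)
Your overall architecture matches the paper's exactly: reduce to the Brauer star algebra $A_e^\ell=B_{e,m}^\star$ via the covering bijections on both the tilting side (Theorem~\ref{thm-SINakayama-2tilt}) and the sms side (the identification of configurations recalled in Section~\ref{sec:prelim}, with mutation-compatibility from Remark~\ref{rmk-covering-theory}), and conclude from Theorem~\ref{thm-main-btree}. You also correctly isolate the one nontrivial point the paper's short proof glosses over: that the composition along three sides of your square really is $\frakf$ for $A_n^\ell$.

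Where you diverge is in how you propose to establish that commutativity. Your plan (i)--(iii) goes through Asashiba's derived covering functors, arguing that $F_T$ and $F_{\bar T}$ are intertwined by push-down/pull-up. This is workable in principle, but it is heavier than needed and runs into exactly the caveat the paper flags just after Theorem~\ref{thm-SINakayama-2tilt}: the bijection $\pi^\pm_{\twotilt}$ is constructed combinatorially via triangulations, and it is not a priori clear that it agrees on the nose with Asashiba's push-down. So your ``reduces to the statement that $T$ and $\bar T$ are images of one another under the push-down functor --- exactly the content of Theorem~\ref{thm-SINakayama-2tilt}'' is optimistic; that identification is an extra step you would still have to supply.

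The paper sidesteps this entirely with a much lighter argument (made explicit in the very next proof, Theorem~\ref{thm-main-naka0}, and implicit here): $\frakf$ itself already preserves mutations, being the composite $\twotilt(A_n^\ell)\hookrightarrow\tilt(A_n^\ell)\twoheadrightarrow\smss(A_n^\ell)$ of mutation-preserving maps; the three-sided composition also preserves mutations; both send the stalk complex $A_n^\ell$ to $\sms_{A_n^\ell}$; and by Proposition~\ref{thm-2tiltAlgo} every object of $\twotilt_\pm(A_n^\ell)$ is reached from the stalk by iterated irreducible mutations. Hence the two maps agree everywhere, and your square commutes without ever invoking derived covering functors. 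I would recommend replacing your (i)--(iii) with this connectedness argument.
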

\begin{proof}
Recall that there is a one-to-one correspondence between $\smss(A_n^\ell)$ and $\smss(A_e^\ell)$, as they can both be identified as $\tau^{e\integer}$-stable configurations of $\integer A_\ell$.  Note that $A_e^\ell$ is just Brauer star algebra $B_{e,m}^\star$ with $m=\ell/e>1$.  This correspondence respects mutations, in the sense that an irreducible mutation of sms in $\smss(A_e^\ell)$ corresponds to an irreducible (Nakayama-stable) mutation of sms in $\smss(A_e^\ell)$ (see Remark \ref{rmk-covering-theory}).  Also, the bijections $\twotilt_\pm(A_n^\ell)\leftrightarrow \twotilt_\pm(A_e^{em})$ are also mutation preserving (Theorem \ref{thm-SINakayama-2tilt}).   Therefore, we have composition of mutation preserving maps
\[
\twotilt_\pm(A_n^\ell)\to \twotilt_\pm(A_e^{em}) \to \smss_\pm(A_e^{em})\to \smss_\pm(A_n^\ell),
\]
and all the maps are bijective.  The theorem follows.
\end{proof}

Finally, we prove Theorem \ref{thm-main} for the remaining cases $A_{\ell k}^\ell$.  Note that putting put $e=\ell$ and $k=1$, the algebra $A_e^e$ is the multiplicity-free Brauer star algebra.
\begin{theorem}\label{thm-main-naka0}
For any fixed $e,\ell\in\integer_{>0}$ with $e=\ell k$ for some $k\neq 0$.  $\frakf:\twotilt(A_e^\ell)\to\smss(A_e^\ell)$ is surjective non-injective and preserves mutations.
\end{theorem}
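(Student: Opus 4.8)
The plan is to reduce to the multiplicity-free Brauer star $A_\ell^\ell=B_{\ell,1}^\star$ and then compare everything with the multiplicity $m>1$ situation already settled in Theorem \ref{thm-main-btree}. Since $\gcd(e,\ell)=\gcd(\ell k,\ell)=\ell$, the algebra $A_\ell^\ell$ is the relevant reduction of $A_e^\ell$. Exactly as in the preceding theorem, Theorem \ref{thm-SINakayama-2tilt} gives mutation-preserving bijections $\twotilt(A_e^\ell)\leftrightarrow\twotilt(A_\ell^\ell)$, while covering theory (Remark \ref{rmk-covering-theory}) identifies $\smss(A_e^\ell)$ with $\smss(A_\ell^\ell)$, both being the $\tau^{\ell\integer}$-stable configurations of $\integer A_\ell$; these identifications intertwine $\frakf$ by the same covering argument, so it suffices to treat $A_\ell^\ell$. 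Mutation preservation is then inherited, just as before, from the fact that $\frakf$ is the composite of the mutation-preserving maps $\twotilt(A_\ell^\ell)\hookrightarrow\tilt(A_\ell^\ell)\twoheadrightarrow\smss(A_\ell^\ell)$.

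For surjectivity I would mirror Lemma \ref{lem-frakf-surj}, the only change being that for $m=1$ there is no well-defined type to respect. Given any $\sms\in\smss(A_\ell^\ell)$, write it as a $B_{\ell,1}^G$-simple-image for some Brauer tree $G$ (every sms of an RFS algebra is a simple-image). Transporting a complex realizing $G$ from $\twotilt_-(A_\ell^{\ell m})$ (Proposition \ref{prop-SZRS}) through the bijection of Theorem \ref{thm-SINakayama-2tilt} produces $T'\in\twotilt_-(A_\ell^\ell)$ with $E_{T'}\isom B_{\ell,1}^G$, because the underlying tree attached to a triangulation by the Schaps--Zakay-Illouz construction (Proposition \ref{prop-triangualtion-to-BTree}) does not depend on the multiplicity. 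Then $\frakf(T')$ is again a $B_{\ell,1}^G$-simple-image, so by \cite{Asa2} it differs from $\sms$ by a stable auto-equivalence; since the Picard part acts through label rotations, i.e. through even powers of $\Omega$ (recall $\tau=\Omega^2$ as $A_\ell^\ell$ is symmetric), we may write $\frakf(T')=\Omega^{h}\sms$. If $h$ is even then $\Omega^h=\tau^{h/2}$ and a suitable label rotation of $T'$ inside $\twotilt_-$ realizes $\sms$; if $h$ is odd then, using $\frakf(T'[1])=\Omega^{-1}\frakf(T')$ from Lemma \ref{lem-frakf-well-def} and passing to $T'[1]\in\twotilt_+$, a label rotation again realizes $\sms$. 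Hence $\frakf$ is surjective.

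For non-injectivity I would argue by cardinality, which is precisely where the collapse of the two types in Proposition \ref{prop-surject-config} becomes visible. By Theorem \ref{thm-SINakayama-2tilt} the set $\twotilt(A_\ell^\ell)=\twotilt_-\sqcup\twotilt_+$ has $2|\mathcal{T}(\ell)|$ elements, and the same count holds for $\twotilt(A_\ell^{\ell m})$, which by the bijection of Theorem \ref{thm-main-btree} equals $|\smss(A_\ell^{\ell m})|$. The multiplicity-forgetting surjection $\pi\colon\config\mapsto\widetilde{\config}$ of Proposition \ref{prop-surject-config} maps $\smss(A_\ell^{\ell m})$ onto $\smss(A_\ell^\ell)$, and because for $m>1$ the domain splits into the two disjoint types $\smss_\pm$ that merge into a single typeless family when $m=1$, this surjection is strictly non-injective. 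Concretely, Adachi's enumeration \cite{Adachi} gives $|\mathcal{T}(\ell)|=\tfrac12\binom{2\ell}{\ell}$, while the identification $\smss(A_\ell^\ell)\isom\Conf(\integer A_\ell)$ together with \cite{BLR,Riedtmann2} gives $|\smss(A_\ell^\ell)|=\frac{1}{\ell+1}\binom{2\ell}{\ell}$, the $\ell$-th Catalan number. Thus $|\smss(A_\ell^\ell)|=\frac{1}{\ell+1}\binom{2\ell}{\ell}<\binom{2\ell}{\ell}=|\twotilt(A_\ell^\ell)|$ for every $\ell\ge 1$, so the surjection $\frakf$ cannot be injective.

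The main obstacle I anticipate is making the strict inequality $|\smss(A_\ell^\ell)|<|\twotilt(A_\ell^\ell)|$ rigorous without leaning on the external enumerations, that is, proving directly that $\pi$ genuinely collapses information rather than merely relabeling it. The cleanest self-contained route is to exhibit two distinct configurations of $A_\ell^{\ell m}$ with the same image $\widetilde{\config}$ under Proposition \ref{prop-surject-config} (for instance one of bottom-type and one of top-type, which are interchanged by the $\Omega$-action that swaps types for $m>1$). Verifying such a coincidence amounts to tracking how tree pruning interacts with the multiplicity reduction, for which Lemma \ref{lem-mutateSimples-multi-free} and the $\Omega$-equivariance $\pi\,\Omega=\Omega\,\pi$ of the map $\config\mapsto\widetilde{\config}$ are the natural tools.
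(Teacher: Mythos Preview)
Your mutation-preservation and non-injectivity arguments are correct and essentially match the paper's. The gap is in surjectivity: the claim that $T'[1]\in\twotilt_+(A_\ell^\ell)$ is false. If $T'\in\twotilt_-(A_\ell^\ell)$ has any genuinely two-term indecomposable summand (so $T'\neq A$), then $T'$ lives in degrees $-1,0$ and $T'[1]$ lives in degrees $-2,-1$; this is not a two-term complex in the sense used here, so it lies outside $\twotilt(A_\ell^\ell)$ altogether. The identity $\frakf(T'[1])=\Omega^{-1}\frakf(T')$ holds on $\tilt(A)$, but it does not hand you a preimage inside $\twotilt(A)$. Without the type dichotomy of Corollary~\ref{cor-disjoint-sms-type} (which genuinely requires $m>1$), there is no direct way to dispose of the odd-$h$ case, and Lemma~\ref{lem-frakf-well-def} cannot be invoked as you do since its proof rests on Proposition~\ref{prop-respectTauOrbit}, again an $m>1$ statement.

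The paper sidesteps this by \emph{not} proving surjectivity directly for $m=1$. Instead it passes through an auxiliary multiplicity $m>1$: one composes the bijections $\twotilt(A_e^\ell)\leftrightarrow\twotilt(A_\ell^{\ell m})\leftrightarrow\smss(A_\ell^{\ell m})$ from Theorem~\ref{thm-SINakayama-2tilt} and Theorem~\ref{thm-main-btree} with the multiplicity-forgetting surjection $\smss(A_\ell^{\ell m})\twoheadrightarrow\smss(A_\ell^\ell)$ of Proposition~\ref{prop-surject-config}, using Lemma~\ref{lem-mutateSimples-multi-free} to see that this last map also respects mutation. The composite is then identified with $\frakf$ by checking agreement on the stalk complex and invoking tilting-connectedness together with mutation preservation; surjectivity is thereby inherited from the $m>1$ bijection and Proposition~\ref{prop-surject-config}, with no parity case-split. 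Your final paragraph already names exactly these ingredients, but you deploy them for non-injectivity (where the cardinality count alone suffices) rather than for surjectivity, which is where they are actually needed.
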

\begin{proof}
We have
\begin{equation}\label{eqn-compose-mutation-resp-maps}
\twotilt(A_e^\ell)\leftrightarrow \twotilt(A_e^{em}) \leftrightarrow \smss(A_e^{em}) \twoheadrightarrow \smss(A_e^e) \leftrightarrow \smss(A_e^\ell)
\end{equation}
with all the left-to-right maps preserving mutations by Theorem \ref{thm-SINakayama-2tilt}, Theorem \ref{thm-main-btree}, Lemma \ref{lem-mutateSimples-multi-free}, Remark \ref{rmk-covering-theory}.  Take canonical stalk tilting complex $A_e^\ell$, it is easy to see that this maps to $\sms_{A_e^\ell}$ along the composition of maps in (\ref{eqn-compose-mutation-resp-maps}).  Since all self-injective Nakayama algebras are (strongly left) tilting-connected and the composition (\ref{eqn-compose-mutation-resp-maps}) respects mutation, this implies that the composition (\ref{eqn-compose-mutation-resp-maps}) is precisely $\frakf$.  Now it remains to show that $\smss(A_e^{em})\twoheadrightarrow\smss(A_e^e)$ is not injective.  Since $\smss(A_e^{em})$ bijects with $\twotilt(A_e^{em})$, by \cite[Cor 2.24]{Adachi}, $|\smss(A_e^{em})|={{2e}\choose{e}}$.  On the other hand, $\smss(A_e^e)$ bijects with the set of $\tau^e$-stable configurations of $\integer A_e$, which is the same as the set of configurations of $\integer A_e$.  Hence $\smss(A_e^e)$ bijects with non-crossing partitions of type $\mathbb{A}_e$, for which cardinality is well-known, namely the Catalan number $\frac{1}{e+1}{{2e}\choose{e}}$ (see, for example, \cite{Reading,S}).  This completes the proof.
\end{proof}

\textsc{Acknowledgement}  The author would like to thank Alexandra Zvonareva for discussions which leads to the creation of this work, and Takahide Adachi for explaining details of his results in \cite{Adachi} which helps to generalise the result in a previous draft.  Proposition \ref{prop-triangualtion-to-BTree} is obtained in a discussion with Takahide Adachi and Takuma Aihara during our collaboration for a forthcoming article.  The author is supported by EPSRC Doctoral Training Scheme.

\end{document}